\newtheorem{theorem}{Theorem}[section]
\newtheorem{lemma}[theorem]{Lemma}
\newtheorem{proposition}[theorem]{Proposition}
\newtheorem{corollary}[theorem]{Corollary}
\newtheorem{definition}[theorem]{Definition}
\newtheorem{remark}[theorem]{Remark}
\theoremstyle{remark}
\newtheorem{example}{Example}[section]
\numberwithin{equation}{section}
 \DeclareSymbolFont{largesymbols}{OMX}{yhex}{m}{n}
  \DeclareMathAccent{\widehat}{\mathord}{largesymbols}{"62}
\DeclareMathOperator{\sat}{sat}
\DeclareMathOperator{\Br}{Br}
\DeclareMathOperator{\Cons}{Cons}
\DeclareMathOperator{\Poly}{Poly}
\DeclareMathOperator{\cl}{cl}
\newcommand{\C}{\CMcal{C}}
\newcommand{\RR}{\mathbf{R}}
\newcommand{\HH}{\CMcal{H}}
\newcommand{\PP}{\mathbf{P}}
\newcommand{\N}{\mathbf{N}}
\newcommand{\E}{\CMcal{E}}
\newcommand{\F}{\CMcal{F}}
\newcommand{\EE}{\mathbf{E}}
\newcommand{\Z}{\mathbf{Z}}
\newcommand{\cals}{\CMcal{S}}
\newcommand{\proj}{\text{proj}}
\newcommand{\Q}{Q}
\newcommand{\A}{A}
\title{Small-time asymptotics for hypoelliptic diffusions }
\author{Juraj F\"{o}ldes}
\author{David P. Herzog}
\begin{document}
\maketitle

\begin{abstract}
An inductive procedure is developed to calculate the asymptotic behavior at time zero of a diffusion with polynomial drift and degenerate, additive noise.  The procedure gives rise to two different rescalings of the process; namely, a functional law of the iterated logarithm rescaling and a distributional rescaling.  The limiting behavior of these rescalings is studied, resulting in two related control problems which are solved in nontrivial examples using methods from geometric control theory.  The control information from these problems gives rise to a practical criteria for points to be regular on the boundary of a domain in $\RR^n$ for such diffusions.     
\end{abstract}

\section{Introduction}
The goal of this work is to explore the time-zero behavior of the following class of stochastic differential equations (SDEs) on $\RR^n$
\begin{align}
\label{eqn:SDEp}
\begin{cases}
d x_t=  P(x_t) \,dt + \sigma \, dB_t\\
x_0 = 0 \,,
\end{cases}
\end{align}
where $P: \RR^n\rightarrow \RR^n$ is a polynomial vector field, $\sigma=\text{diag}(\sigma^1, \sigma^2,\ldots, \sigma^n) $ is a constant, diagonal $n\times n$ matrix with $\sigma^i \geq 0$, and $B_t$ is a standard $n$-dimensional Brownian motion defined on a filtered probability space $(\Omega, \F,  (\F_t)_{t\geq 0},\PP, \EE)$.  Importantly,  some or many of the diagonal elements of $\sigma$ may be zero, so that the noise in~\eqref{eqn:SDEp} is degenerate.

Our study of~\eqref{eqn:SDEp} at time zero is partly motivated by the \emph{regular point problem}.  More specifically,  
in this paper we provide criteria for identifying regular points on the boundary of a domain in $\RR^n$ for diffusions with degenerate noise of the form~\eqref{eqn:SDEp}.  The regular point problem often arises in the context of 
partial differential equations, 
when one investigates well-posedness of the Dirichlet and Poisson problems on a domain $\mathscr{O}\subset \RR^n$ with the differential operator being  the inifinitesimal generator of the diffusion \eqref{eqn:SDEp},  see~\cite{FH_23, OK_13}.  In the context of~\eqref{eqn:SDEp},  assume without loss of generality $0\in \partial \mathscr{O}$ (otherwise change coordinates) and denote
\begin{align}
\xi= \inf\{ t >0 \, : \, x_t \in \mathscr{O}\}. 
\end{align}
If $\PP\{ \xi =0\} =1$,  then we call $0\in \RR^n$ \emph{regular} for $(x_t, \mathscr{O})$,   otherwise, $0\in \RR^n$ is called \emph{irregular} for $(x_t, \mathscr{O})$.  Note that, because the event $\{ \xi =0\}$ belongs to the germ sigma-field $\bigcap_{t>0} \F_t$, Blumenthal's zero-one law implies $\PP\{ \xi =0 \} \in \{ 0,1\}$,  see \cite{Blumenthal1957}.

When the diffusion~\eqref{eqn:SDEp} is non-degenerate;  that is, when $\sigma^i \neq 0$ for all $i$, then there is the well-known \emph{cone condition} ensuring $0\in \RR^n$ is regular for $(x_t, \mathscr{O})$.  More precisely, let $B_r(0)$ denote the open ball of radius $r>0$ centered at $0\in\RR^n$.  If there is a basis $\{v_1, v_2, \ldots, v_n\}$ of $\RR^n$ such that 
\begin{align*}
\text{Cone}(v_1, \ldots, v_n): = \{ \lambda_1 v_1 + \cdots + \lambda_n v_n \, : \, \lambda_i >0 \} 
\end{align*}     
satisfies 
\begin{align*}
\text{Cone}(v_1, \ldots, v_n) \cap B_\rho(0) \subset \mathscr{O}
\end{align*}
 for some $\rho >0$, then $0\in \RR^n$ is regular for $(x_t, \mathscr{O})$.  Intuitively, this is true because the trajectories of $\sigma B_t$ dominate the time-zero behavior of~\eqref{eqn:SDEp} in terms of scale, i.e. $\sqrt{t}$ versus $t$, and the process $\sigma B_t$ enters $\mathscr{O}$ instantaneously as long as there is ``sufficient space", as determined by the cone condition~\cite{BG_07,CFH_22, PS_72}. 
 
 On the other hand, when one or more of the diffusion coefficients $\sigma^i$ vanish, much less is known about regular points,  and results are typically restricted to specific examples.  In the pair of papers~\cite{Lac_97ii, Lac_97i}, the so-called \emph{Iterated Kolmogorov} diffusion is studied, which is a special case of~\eqref{eqn:SDEp} with $\sigma^1 >0$, $\sigma^2=\cdots =\sigma^n=0$ and $P(x)=(0, x^1, x^2, \ldots, x^{n-1})$.  In~\cite{Lac_97i}, a necessary and sufficient condition for $0\in \partial \mathscr{O}$ to be regular for the Iterated Kolmogorov diffusion is given for a general class of boundaries $\partial \mathscr{O}$ and the paper~\cite{Lac_97ii} deduces associated laws of the iterated logarithm.  Under conditions on the fundamental solution,  in~\cite{Kog_17} a general class of hypoelliptic diffusions is studied using methods from PDE and a Wiener, cone-type criterion is deduced for existence and uniqueness of generalized solutions (in the Perron-Wiener sense)  of the associated Dirichlet problem.  One of the main applications in \cite{Kog_17} is a criteria for regular points for linear, hypoelliptic diffusions with additive noise~\cite[Theorem 6.2]{Kog_17}, which is also a special case of~\eqref{eqn:SDEp}.  For earlier related results, we refer to~\cite{GS_90, LTU_17, NS_87} which establish Wiener-type criteria for regular points of \emph{strongly hypoelliptic} diffusions.  Strongly hypoelliptic diffusions are different structurally than hypoelliptic diffusions of the form~\eqref{eqn:SDEp},  since only the noise vector fields,  along with their iterated commutators are assumed to span the tangent space at all points in space.  One of the goals of the present paper is to generalize the strongly hypoelliptic setting and produce results for~\eqref{eqn:SDEp}.

Different from the uniformly elliptic setting; that is, when $\sigma^i \neq 0$ for all $i$, the process $\sigma B_t$ is no longer dominant at time zero in every direction.  In particular, one or more directions in the equation are at a smaller scale than $\sqrt{t}$.  Thus the first issue present in the regular point problem is the identification of the correct scale for the process $x_t$ in every direction at time zero.  We address this issue for equations of the form~\eqref{eqn:SDEp} which are \emph{noise propagating} (see Definition~\ref{def:dimQsig} below) at two scales; namely, a functional law of the iterated logarithm (LIL) scale and a distributional scale.  We describe a constructive inductive procedure that produces the right scale in each component in these two senses.  The procedure starts from the directions in which noise is explicitly present ($\sigma_j \neq 0$) and whose scale is known.  These scales are then propagated through the equation to determine the correct scales in the remaining directions.  It is important to note that the notion of \emph{noise propagating} appears weaker than the notion of hypoellipticity as in~\cite{Hor_67}, and there are simple, concrete examples of diffusions~\eqref{eqn:SDEp} which are noise propagating that are not hypoelliptic (see Example~\ref{ex:NPNH}).  However, we do not have proof of this general assertion.  Intuitively, noise propagating means that the noise implicitly accentuates all directions in space, but the resulting diffusion can still live on a lower-dimensional manifold in $\RR^n$.  The inductive procedures to compute scales are presented in Section~\ref{sec:IL} and Section~\ref{sec:lawscale}, where the notion of \emph{scalings} is introduced and discussed in Section~\ref{sec:scales}.  

As a consequence of the identification of scales for noise propagating diffusions we obtain two notions time-zero behavior.  Namely, a functional LIL at time zero, which, like the LIL for Brownian motion, gives the very precise ``asymptotic windows" in path space for the rescaled process using this particular scaling.  On the other hand, we can also compute the correct scale in terms of distribution on path space.  The latter rescaling is less refined than the functional LIL scaling.  However, in terms of the regular point problem, the resulting control problem which determines which points are visited by the limit of the rescaled process is arguably easier to solve in the distributional case than the LIL scaling.  This is because methods from geometric control theory are now accessible~\cite{AS_04, AS_06, GHHM_18, HM_15, Jur_97, JK_85, NM_24}, whereas in the case of the functional LIL the controls must be bounded.  Nevertheless, due to the related structure of the two rescalings, control information from the distributional control problem can sometimes be transferred to control information for LIL scaling.  This connection is explored in Section~\ref{sec:control} and Section~\ref{sec:gcontrol}.  In particular, in addition to discussing the relationships between the two control problems and how to solve them, we also give a mostly self-contained introduction to various control methods in  Sections~\ref{sec:control} and~\ref{sec:gcontrol}  .

Lastly, in Section~\ref{sec:regularpt}, we establish a criteria for $0\in \RR^n$ to be regular for $(x_t, \mathscr{O})$ for various domains $\mathscr{O}\subset \RR^n$ with $0\in \partial \mathscr{O}$.  Intuitively,  one first computes the distributional scaling inductively using the already introduced procedure,  under the hypothesis that $x_t$ is noise propagating.  Then, one solves the resulting control problem associated to the rescaled process using the methods from geometric control theory discussed in Section~\ref{sec:gcontrol}.  Provided the boundary $\partial \mathscr{O}$ is well-behaved under the scaling to allow sufficient space, and the control trajectories can access this space, it follows  that $0\in \mathscr{O}$ is regular for $(x_t, \mathscr{O})$.

We begin in the next section by introducing notation and terminology used throughout paper.

\section{Notation and Terminology}
\label{sec:NotTerm}

Due to the possibility of finite-time explosion in~\eqref{eqn:SDEp}, we define stopping times $\tau_k$, $k\in \N$, and $\tau_\infty$ by  
\begin{align}
\tau_k= \inf\{ t\geq 0 \, : \, |x_t| \geq k\} \qquad \text{ and } \qquad \tau_\infty= \lim_{k\rightarrow \infty} \tau_k.    
\end{align}
It follows that the (pathwise) solution $x_t$ is defined and unique for all finite times $t< \tau_\infty$, $\PP$-almost surely.  If $\PP \{ \tau_\infty=\infty\}=1$, then we say that $x_t$ is \emph{non-explosive}.   Otherwise, we fix a death state $\Delta\notin \RR^n$ and endow $\RR^n \cup \{ \Delta \}$ with the same topology as the one point compactification; that is, $U$ is open in $\RR^n\cup \{ \Delta \}$ if either $U$ is open in $\RR^n$ or $U = \{ \Delta \} \cup (\RR^n \setminus C)$ for some compact $C\subset \RR^n$.  Then we extend the local solution $x_t$ to all finite times $t\geq0$ by setting $x_t = \Delta$ for $t\geq \tau_\infty$.  In particular, for almost every $\omega \in \Omega$, the mapping $t\mapsto x_t(\omega):[0,1]\rightarrow \RR^n \cup\{\Delta\}$ belongs to the space of \emph{explosive trajectories} $\E$ on $[0,1]$; that is, it belongs to the set $\E$ of continuous mappings $f:[0,1]\rightarrow \RR^n \cup \{ \Delta \}$ such that if $f_{t_0}=\Delta$ for some $t_0 \in [0,1]$ then $f_t = \Delta$ for all $t\geq t_0$ .  For any $g\in \E$, we let 
\begin{align}
\tau(g):= \inf\{ t\in[0,1]\, : \, g_t=\Delta\}
\end{align}
 with the understanding that $\inf \emptyset :=\infty$.  In particular, $\tau(x_\cdot(\omega))= \tau_\infty(\omega) \wedge 1$ for $\PP$-almost every $\omega$.  We let $\C$ denote the space of continuous mappings $f:[0,1]\rightarrow \RR^n$.  In particular, $\C \subset \E$.  We also make use of the spaces
 \begin{align}
\label{eqn:Hdef}
\HH&= \{ h\in \C \, : \, h_0=0, \, \dot{h}\in L^2([0,1]; \RR^n)\} \quad \text{ and } \quad \HH_\alpha = \{ f\in \HH \, : \, \tfrac{1}{2}\textstyle{\int_0^1 |\dot{f}_s|^2 \, ds} \leq \alpha^2 \}, 
\end{align} 
 where $\dot{h} = dh/dt$ and $\alpha >0$.

 For any $t\in (0,1]$, we define $d_t:\E\times \E\rightarrow [0, \infty]$ by 
 \begin{align}
 \label{eqn:distances}
 d_t(f,g):= \begin{cases}
 \sup_{s\in[0,t]} |f_s-g_s| & \text{ if } \tau(f) \wedge \tau(g)>t\\
 \infty & \text{ otherwise}
 \end{cases}.
 \end{align} 
Observe that $d_t(f,g)$ agrees with the usual sup norm on $[0, t]$ provided both $f$ and $g$ have yet to hit the death state $\Delta$ by time $t$.   For simplicity, we write $d=d_1$.  It is not hard to check that $d_t$ satisfies the triangle inequality.  However, $d_t(f,f) = \infty$ for any $f\in \E$ with $f_{t/2}=\Delta$, and therefore $d_t$ is not a metric on $\E$.  For any $A\subset \E$ and any $f\in \E$, we let
\begin{align}
\label{eqn:distances2}
d_t(f, A) = \inf\{ d_t(f,g) \, : \, g \in A \}. 
\end{align}
 
 Throughout the paper, if $x\in \RR^n$, then we write $x=(x^1, x^2, \ldots, x^n)$ where $x^i\in \RR$.  The $k$th power of $x^i \in \RR$ is denoted by $(x^i)^k$ to avoid confusion.  The set $\{e_1, e_2, \ldots, e_n \}$ denotes the standard orthonormal basis of $\RR^n$, and we use $\cdot$ to denote the usual dot product of vectors.  The notation $B_r(x)$ denotes the open ball centered at $x\in \RR^n$ with radius $r>0$.  The index set $I= \{1,2,\ldots, n\}$ is used frequently below. For any subset $J \subset I$ and $x\in \RR^n$, we let $\pi_{J}(x)\in \RR^n$ denote the projection onto the span of $\{ e_j \}_{j\in J}$; that is, 
\begin{align}
\label{eqn:projection1}
\pi_J(x) = \sum_{j \in J} (x\cdot e_j)  e_j  = \sum_{j\in J} x^j e_j .
\end{align}  
We also use the notation $\proj_j: \RR^2\rightarrow \RR$, $j=1,2$, to denote the projection onto the $j$th coordinate; that is, 
\begin{align}
\label{eqn:projection2}
\proj_1(x^1,x^2)=x^1 \quad \text{ and } \quad \proj_2(x^1,x^2)=x^2.
\end{align}  
The notation $\cl (A)$ denotes the closure of a set $A$, where we indicate the precise topology if it is not clear from context.  The set of constant vector fields on $\RR^n$ is denoted by $\Cons(\RR^n)$, while the set of polynomial vector fields on $\RR^n$ is denoted by $\Poly(\RR^n)$.  Below, we often express $Z\in \Poly(\RR^n)$ in coordinates by 
\begin{align}
\label{eqn:vfgeneric}
Z= \sum_{j=1}^n Z^j(x) \frac{\partial}{\partial x^j}
\end{align}  
for some polynomial functions $Z^1, \ldots, Z^n$.  
For any $x\in \RR^n$ and $Z\in \Poly(\RR^n)$, we let $Z(x)$ denote the point $Z(x)= (Z^1(x), \ldots, Z^n(x))\in \RR^n$, where $Z^1, \ldots, Z^n$ are as in~\eqref{eqn:vfgeneric}.

For any vector field $R\in \Poly(\RR^n)$ and $x\in \RR^n$, we let $\varphi_t(R)x$ denote the maximally defined (on $[0, \infty)$) solution of the ordinary differential equation 
\begin{align}
\label{eqn:flow1}
\begin{cases}
\dot{x}_t = R(x_t) \\
x_0=x.
\end{cases}
\end{align}
Note that since $R$ is a polynomial vector field, it is locally Lipschitz, so the solution~\eqref{eqn:flow1} exists locally in time depending on the initial condition $x\in \RR^n$.  For any $R\in \Poly(\RR^n)$, $x\in \RR^n$ and $f\in \HH$, we denote by $\varphi_t(R, f)x$ the maximally defined (on $[0,1]$) solution of 
\begin{align}
\label{eqn:flow2}
\begin{cases}
\dot{x}_t = R(x_t) + \sigma \dot{f} \\
x_0=x.
\end{cases}
\end{align}
For similar reasons, $\varphi_t(R,f)x$ also exists locally in time.

\section{Algebraic Structure and Scalings}
\label{sec:scales}
There are multiple ways to rescale equation~\eqref{eqn:SDEp} to capture the behavior of the solution at time $t=0$.  We consider two in this paper. While one keeps track of only power asymptotics, the other additionally keeps track of logarithmic corrections.   In this section, we introduce a pair scaling that applies to both.

Define
\begin{align}
\label{def:scalings}
\cals= \tfrac{1}{2}\Z_{\geq 0}\times \tfrac{1}{2}\Z_{\geq 0}.  
\end{align}
We call points in the set $\cals$ \emph{scalings}.  We define an ordering on $\cals$ as follows: if $a,b \in \cals$, then we write $a \preceq b$ if $a^1 < b^1$ or if $a^1 =b^1$ and $a^2 \geq b^2$.  If $a \preceq b$ and $a \neq b$, we write $a \prec b$.  Note that $\preceq$ is a total ordering on $\cals$.   It is convenient to augment the set $\cals$ with the \emph{infinite scaling} $a_\infty=\infty$ which satisfies $b \prec a_\infty$ for any $b\in \cals$. We introduce a sum $+$ on  
\begin{align}
\label{def:scalings2}
\cals_\infty= \cals\cup \{ \infty\} 
\end{align}
by 
\begin{align}
a+ b:= 
\begin{cases}
(a^1+ b^1, a^2+ b^2 ) & \text{ if } a, b \in \cals \\
\infty & \text{ otherwise}
\end{cases}  .
\end{align}
For $a \in \cals_\infty$ and $\ell\in \Z_{\geq 0}$, we also define \begin{align}
\ell a:=\begin{cases}
(\ell a^1, \ell a^2) & \text{ if } a \in \cals \\
\infty & \text{ if } a = \infty, \, \ell \geq 1 \\
(0,0) & \text{ if } \ell =0  
\end{cases}.
\end{align}

For any monomial $m: \RR^k\rightarrow \RR$ given by 
\begin{align}
m(x)=m(x^1, x^2, \ldots, x^k)= \alpha (x^1)^{n_1} (x^2)^{n_2} \cdots (x^k)^{n_k}, \,\, \alpha \neq 0, n_i \in \Z_{\geq 0},
\end{align}
and scalings $a_1, \ldots, a_k \in \cals_\infty$, we define the scaling
\begin{align}
\label{def:scalings3}
m(a_1, a_2, \ldots, a_k):= n_1 a_1+ n_2 a_2 + \cdots + n_k a_k.  
\end{align} 
If $\alpha =0$ so that $m\equiv 0$ is the zero polynomial, then we define 
\begin{align*}
m(a_1, a_2, \ldots, a_k) =\infty 
\end{align*}
regardless of the choice of  $a_1, \ldots, a_k \in \cals_\infty$.  We recall that if $p: \RR^k\rightarrow \RR$ is a polynomial, then we can write 
\begin{align}
\label{eqn:sf}
p=\sum_{j=1}^\ell m_j
\end{align}
 for some integer $\ell \geq 1$ and monomials $m_j:\RR^k\rightarrow \RR$, $j=1,2,\ldots, \ell$.  We say that $p$ is written in \emph{standard form} if $\ell$ in~\eqref{eqn:sf} is minimal; that is, $p$ is expressed as a sum of a minimal number of monomials.   This way, no cancellations of monomials can occur.  For any polynomial $p:\RR^k\rightarrow \RR$ written in standard form as in~\eqref{eqn:sf} and any $a_1, \ldots, a_k\in \mathscr{S}_\infty$, we define
 \begin{align*}
 p(a_1, \ldots, a_k) = \min_{j=1,2, \ldots, \ell} m_j(a_1, \ldots, a_k).
 \end{align*}      
 Also, if $p:\RR^k\rightarrow \RR^j$ is a vector of polynomials with $p=(p_1, \ldots, p_j)$, $p_\ell:\RR^k \rightarrow \RR$, then for $a_1, \ldots, a_k \in \mathscr{S}_\infty$ we define 
 \begin{align*}
 p(a_1, \ldots, a_k) = \min_{\ell=1,2,\ldots, j} p_\ell(a_1, \ldots, a_k).  
 \end{align*}

Next, let us elucidate the definitions above. 

\begin{example}
Consider $p: \RR^2 \rightarrow \RR$ given by 
\begin{align}
\label{eqn:fepo}
p(x^1, x^2) = x^1 x^2 +(x^1)^2 -x^1 x^2,  
\end{align}
and suppose that $a_1=(2,0)$, $a_2=(0,1/2)$.  Then $p(a_1, a_2) \neq (2,1/2)$ since $p$ is not in standard form.  The standard form of $p$ is $p(x^1, x^2)= (x^1)^2$ so that $p(a_1, a_2) = (4,0)$.   
\end{example}

\begin{example}Suppose $p:\RR^2\rightarrow \RR^2$ is given by
\label{ex:1}
\begin{align}
\label{eqn:pex}
p(x^1,x^2) =((x^1)^2-(x^2)^2, 2x^1x^2+(x^2)^2+x^1+(x^1)^6). 
\end{align}
Let $a_1=a_2=(1,0)\in \cals$.  Then $p(a_1, a_2) =(1,0).$  Note that, in this case, $p(a_1, a_2)$ coincides with the minimal degree among all nontrivial monomials in each component of $p$.  Alternatively, if $ b_1=(1/2, 1/2)$ and $b_2=(0,1)$, then $p(b_1, b_2)= (0,2)$.  In this case, the terms in $p$ leading to $p(b_1, b_2)=(0,2)$ are the two $(x^2)^2$ terms in~\eqref{eqn:pex}. 
\end{example}

For a fixed set of scalings, we also define what it means for a polynomial vector field to be homogeneous under that collection of scalings.

\begin{definition}
Suppose that $p:\RR^k \rightarrow \RR^\ell$, $p=(p_1, p_2, \ldots, p_\ell)$, is a vector of polynomials and $(a_1, a_2, \ldots, a_k) \in \cals_\infty^k$ is such that $p(a_1, \ldots, a_k) \neq \infty$.  We say that $p$ is \emph{homogeneous under} $(a_1, \ldots, a_k)$ \emph{of degree} $p(a_1, \ldots, a_k)$ if for every $j=1,2,\ldots, \ell$ and every monomial $m_j$ appearing in the standard form for $p_j$, we have $m_j(a_1, \ldots, a_k) = p(a_1, \ldots, a_k)$. 
\end{definition}

\begin{example}
Suppose $p(x^1,x^2)=((x^1)^2-(x^2)^2, 2x^1x^2)$ and $a_1= a_2=(1,0)$.  Then $p$ is homogeneous under $(a_1, a_2)$ of degree $(2,0)$.  Note that this coincides with our usual understanding of homogeneity.  However, if $b_1=(0,1)$ and $b_2=(1/2,0)$, then $p$ is not homogeneous under $(b_1, b_2)$ since all monomials in its standard form have distinct scalings under $(b_1, b_2)$.    
\end{example}

Next, we show how the notion of scaling relates to the actual scaling under a given small parameter.  That is, for any $\alpha\in (0, e^{-1})$ and any $a =(a^1, a^2) \in \cals$, define 
\begin{align}
\label{def:scalings4}
\alpha^a:= \alpha^{a^1} (\log \log \alpha^{-1})^{a^2}. 
\end{align}

\begin{lemma}
\label{lem:hom}
Fix $k, \ell \in I$ and suppose $p: \RR^k \rightarrow \RR^\ell$ is a vector of polynomials that is homogeneous under $(a_1, \ldots, a_k) \in \cals_\infty^k$ of degree $p_*:= p(a_1, \ldots, a_k)\neq \infty$.  Then for any $\epsilon \in (0, e^{-1})$ and $x=(x^1, \ldots, x^k) \in \RR^k$ we have 
\begin{align*}
p\big( \epsilon^{a_1} x^1 , \ldots,  \epsilon^{a_k} x^k \big) = \epsilon^{p_*} p(x^1, \ldots, x^k).
\end{align*} 
\end{lemma}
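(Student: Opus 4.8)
The plan is to reduce the identity to the case of a single monomial, where it becomes a direct consequence of the usual laws of exponents for positive reals; the only point with any content is that the half-integer second coordinate of a scaling forces us to invoke the hypothesis $\epsilon \in (0, e^{-1})$.

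First I would clear away a bookkeeping issue. If $a_i = \infty$ for some $i \in \{1,\ldots,k\}$, then by homogeneity of degree $p_* \neq \infty$ the variable $x^i$ cannot appear in any monomial in the standard form of any component $p_j$ (such a monomial would have scaling $\infty$), so the value assigned to $\epsilon^{a_i} x^i$ is immaterial and we may as well assume $a_i \in \cals$ for every $i$ --- equivalently, read $\epsilon^{\infty}$ as $0$. From now on all $a_i \in \cals$, so each $\epsilon^{a_i}$ is a well-defined positive real number. Likewise, if some $p_j$ is the zero polynomial the asserted identity reads $0 = \epsilon^{p_*}\cdot 0$, and at least one $p_j$ is nonzero since $p_* \neq \infty$.

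Next I would record the two arithmetic facts that drive the argument: for $a, b \in \cals$ and $n \in \Z_{\geq 0}$,
\begin{align*}
\epsilon^{a+b} = \epsilon^{a}\,\epsilon^{b}, \qquad \epsilon^{na} = (\epsilon^{a})^{n}.
\end{align*}
Both follow from the definition $\epsilon^{a} = \epsilon^{a^1}(\log\log\epsilon^{-1})^{a^2}$ in \eqref{def:scalings4}, by applying the exponent laws $t^{r+s} = t^{r}t^{s}$ and $t^{nr} = (t^{r})^{n}$ separately to the two factors. This is exactly where the hypothesis on $\epsilon$ enters: for $\epsilon \in (0, e^{-1})$ we have $\log\log\epsilon^{-1} > 0$, so $(\log\log\epsilon^{-1})^{a^2}$ is a well-defined positive real even when $a^2 \in \tfrac12\Z_{\geq 0}$ is a genuine half-integer, and the exponent laws apply to it.

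Finally I would carry out the reduction. Writing each component in standard form, $p_j = \sum_i m_{j,i}$, it suffices to treat a single monomial $m(x) = \alpha (x^1)^{n_1}\cdots (x^k)^{n_k}$ occurring in some $p_j$; for it the two facts above give
\begin{align*}
m(\epsilon^{a_1}x^1, \ldots, \epsilon^{a_k}x^k)
&= \alpha \prod_{r=1}^{k} (\epsilon^{a_r})^{n_r}(x^r)^{n_r} \\
&= \epsilon^{\, n_1 a_1 + \cdots + n_k a_k}\, m(x^1, \ldots, x^k) = \epsilon^{\, m(a_1, \ldots, a_k)}\, m(x),
\end{align*}
the last equality by \eqref{def:scalings3}. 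By the definition of homogeneity, $m(a_1, \ldots, a_k) = p_*$ for every such monomial, so summing over the monomials of $p_j$ yields $p_j(\epsilon^{a_1}x^1, \ldots, \epsilon^{a_k}x^k) = \epsilon^{p_*} p_j(x)$ for every $j$, and assembling the $\ell$ components gives the claim. There is no genuine obstacle: the lemma is in essence a consistency check on the definitions, so the only care needed is in handling the degenerate cases above and in pinning down precisely where $\epsilon < e^{-1}$ is used, namely positivity of $\log\log\epsilon^{-1}$.
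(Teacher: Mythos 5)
Your proof is correct and follows essentially the same route as the paper's: reduce to a single monomial, discard (or argue away) any variables whose scaling is $\infty$ so that all $a_i \in \cals$, and then apply the exponent laws to $\epsilon^{a}=\epsilon^{a^1}(\log\log\epsilon^{-1})^{a^2}$ componentwise. Your version merely spells out two points the paper leaves implicit — why homogeneity with $p_*\neq\infty$ excludes variables with $a_i=\infty$, and that $\epsilon<e^{-1}$ is needed precisely so that $\log\log\epsilon^{-1}>0$ makes half-integer powers legitimate — which is fine.
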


\begin{proof}
It suffices to prove the result for monomials $m:\RR^k\rightarrow \RR$ of the form 
\begin{align}
m(x^1, \ldots, x^k)= \alpha (x^1)^{n_1} (x^2)^{n_2} \cdots (x^k)^{n_k}, \,\, \alpha \neq 0, n_i \in \Z_{\geq 0}.
\end{align}
Clearly, any such monomial is homogeneous under any $(a_1, \ldots, a_k)\in \cals_\infty^k$ of degree $m_*= m(a_1, \ldots, a_k)=n_1 a_1+\cdots + n_k a_k$ provided $m(a_1, \ldots, a_k)\prec \infty$.   By the condition $m(a_1, \ldots, a_k)\prec \infty$, we may assume without loss of generality that $(a_1, \ldots, a_k) \in \cals^k$.  Finally, observe that for any $\epsilon \in (0, e^{-1})$ we have 
\begin{align*}
m \big( \epsilon^{a_1} x^1 , \ldots,  \epsilon^{a_k} x^k \big)=\epsilon^{n_1 a_1+ \cdots + n_k a_k} m(x^1, \ldots, x^k) = \epsilon^{m_*} m(x^1, \ldots, x^k). 
\end{align*}  
\end{proof}

\section{Laws of the iterated logarithm at time zero}
\label{sec:IL}
In this section, we use the pair scalings from $\mathscr{S}_\infty$ introduced in~\eqref{def:scalings2} to determine a functional law of the iterated logarithm for the process~\eqref{eqn:SDEp} at time zero.  We proceed by induction, starting from the asymptotically dominant directions, and then propagating to the remaining directions with the help of the ordering $\preceq$ introduced in Section~\ref{sec:scales}.   Below, we extend the definition of $\proj_1$ to include in its domain the point $\infty\in \mathscr{S}_\infty$ so that $\proj_1 \infty :=\infty$.  Also, we use the usual generalization of $(<, >, \leq , \geq )$ to the extended real numbers $\RR \cup \{ \infty\}$, so that we can compare $\proj_1 a$ and $\proj_1 b$ whenever $a, b \in \mathscr{S}_\infty$.  
 
\subsection{Determination of scalings}
\label{sec:ILdet}
Recall $P, \sigma$ as in~\eqref{eqn:SDEp} and the set $I=\{1,2,\ldots, n\}$. To initialize the inductive procedure, first define 
\begin{align}
I_0= \{ j \in I \,: \, \sigma^j >0\}.
\end{align}
Note that $I_0$ contains the indices of the directions in which noise is acting independently in~\eqref{eqn:SDEp}.
For every $j \in I_0$, define $a_{0,j} \in \cals$ and $P_\text{L}^j:\RR^n\rightarrow \RR$  by
\begin{align}
a_{0,j}=(1/2, 1/2) \qquad \text{ and } \qquad P^j_\text{L}(x^1, \ldots, x^n)=0.  
\end{align}
Observe that for $j\in I_0$, $P^j_{\text{L}}$  is clearly independent of all variables $x^1, x^2, \ldots, x^n$.  For $j\notin I_0$, we set $a_{0,j}= \infty$.  Also note that the scaling $(1/2, 1/2)$ corresponds to the correct scale for the Brownian motion at time zero, in the law of the iterated logarithm sense.    

%Next, if either $I\setminus I_0=\emptyset$ or $P^j(a_{0,1}, \ldots, a_{0,n}) =\infty$ for all $j\in I\setminus I_0$, then the procedure stops and we set $I_\ell=\emptyset$ for $\ell \geq 1$.  On the other hand, if $P^j(a_{0,1}, \ldots, a_{0,n}) \prec\infty$ for some $j\in I\setminus I_0$, define
%\begin{align}
%\label{def:m_1}
%m_1 &= \min\{  \proj_1 P^j( a_{0,1}, \ldots, a_{0,n} )\,: \, j \in I \setminus I_0\}
%\end{align}
%and set
%\begin{align}
%\label{def:I_1}
% I_1 &= \{ j \in I\setminus I_0 \, : \, \proj_1 P^j ( a_{0,1}, \ldots, a_{0,n} )= m_1\}.    
%\end{align}
%Also, we let   
%\begin{align}
%a_{1,j}=\begin{cases}
%P^j(a_{0,1}, \ldots, a_{0,n}) +(1,0) & \text{ if } j \in I_1 \\
%a_{0,j} & \text{otherwise}  
%\end{cases}.
%\end{align}
%Furthermore, for $j\in I_1$, let $P^j_{\text{L}}(x^1, \ldots, x^n)$ denote the polynomial in $P^j(x^1, \ldots, x^n)$ which is homogeneous of degree $P^j(a_{0,1}, \ldots, a_{0,n})$ and furthermore satisfies 
%\begin{align}
%\label{eqn:gcmpp}
% P^j_\text{L}(a_{0,1}, \ldots, a_{0,n}) \prec (P^j - P^j_{\text{L}})(a_{0,1}, \ldots, a_{0,n}).  
% \end{align}
% By definition, for $j\in I_1$, $P^j_{\text{L}}(x)=P^j_{\text{L}}(\pi_{I_0}(x))$; that is, $P^j_\text{L}$ only depends on the variables whose indices belong to $I_0$.  Otherwise, it would have infinite scaling with respect to $(a_{0,1}, \ldots, a_{0,n})\in \cals_\infty$.   

Inductively, for $\ell \geq 0$ if $I_\ell\neq \emptyset$ and if either $I\setminus \cup_{k=0}^\ell I_k= \emptyset$ or  $P^j(a_{\ell,1}, \ldots, a_{\ell,n})= \infty$ for all $j\in I \setminus\cup_{k=0}^\ell I_k$, the procedure stops and we set $I_{j}=\emptyset$ for $j\geq \ell+1$.  Otherwise, define
\begin{align*}
m_{\ell+1} &= \min \{ \proj_1 P^j (a_{\ell,1}, \ldots, a_{\ell,n}) \,: \, j \in I \setminus\cup_{k=0}^\ell I_k\},\\
I_{\ell+1}&= \{ j \in I \setminus \cup_{k=0}^\ell I_k \, : \,  \proj_1 P^j(a_{\ell,1}, \ldots, a_{\ell,n}) =m_{\ell+1}\},
\end{align*} 
%Define disjoint subsets $I_{\ell+1}^0, I_{\ell+1}^1, \ldots, I_{\ell+1}^{k_{\ell+1}}$ satisfying the following properties
%\begin{itemize}
%\item $k_{\ell+1} \geq 0$ and $I_{\ell+1}= \bigcup_{m=0}^{k_{\ell+1}} I_{\ell+1}^m$.
%\item If $i,j \in I_{\ell+1}^m$, then $P^i(a_{\ell,1}, \ldots, a_{\ell,n}) = P^j (a_{\ell,1}, \ldots, a_{\ell,n})$.
%\item If $i \in I_{\ell+1}^{m}$ and $j \in I_{\ell+1}^k$ with $m<k$, then $P^i(a_{\ell,1}, \ldots, a_{\ell,n}) \prec P^j (a_{\ell,1}, \ldots, a_{\ell,n})$. 
%\end{itemize}
and set
\begin{align}
\label{eq:iadf}
a_{\ell+1, j} = \begin{cases}
P^j(a_{\ell,1}, \ldots, a_{\ell,n}) +(1,0)& \text{ if } j \in I_{\ell+1}\\
 a_{\ell,j} & \text{otherwise}
\end{cases}.
\end{align}
For $j\in I_{\ell+1}$, let $P^j_{\text{L}}(x^1, \ldots, x^n)$ be the homogeneous polynomial in standard form of degree $P^j(a_{\ell,1}, \ldots, a_{\ell,n})$ which satisfies
\begin{align}
\label{eq:elopo}
P^j_\text{L}(a_{\ell,1}, \ldots, a_{\ell,n}) \prec (P^j-P^j_\text{L})(a_{\ell,1}, \ldots, a_{\ell,n}).  
\end{align}
Note that, by definition, for $j\in I_{\ell+1}$, $P^j_\text{L}(x)= P^j_\text{L}(\pi_{\cup_{k=1}^\ell I_k} (x))$; that is, $P^j_\text{L}$, $j\in I_{\ell+1}$, is a function depending only on the variables whose indices belong to $\cup_{k=1}^\ell  I_k$.

\begin{definition}
\label{def:dimQsig}
If there exists $\ell\geq 0$ such that 
\begin{align}
\bigcup_{k=0}^\ell I_k=I
\end{align}
 we call the system~\eqref{eqn:SDEp} \emph{noise propagating}.  Otherwise, we call the system~\eqref{eqn:SDEp} \emph{noise defective}.  If~\eqref{eqn:SDEp} is noise propagating, let $\text{dim}(P,\sigma)$ be the smallest nonnegative integer such that 
 \begin{align*}
 \bigcup_{k=0}^{\text{dim}(P,\sigma)} I_k =I
 \end{align*} 
 and define, for $j\in I$,  
\begin{align}
\label{eqn:fraka}
\mathfrak{a}_j:= a_{\text{dim}(P,\sigma), j}.  
\end{align}
 We call $\text{dim}(P,\sigma)$ the \emph{dimension of propagation}.  When $P$ and $\sigma$ are clear from context, we simply write $\dim$ for $\dim(P, \sigma)$.    
\end{definition}

\begin{example}
\label{ex:LD1}
Suppose that $n=2k$ for some $k\in \N$ and  $U:\RR^k\rightarrow \RR$ is a polynomial.  Consider the equation on $\RR^n$ for $x_t=(q_t, p_t) \in \RR^k \times \RR^k$ given by 
\begin{align}
\label{eqn:LD}
\begin{cases}
dq_t = p_t \, dt & \\
dp_t = -p_t \, dt -\nabla U(q_t) \, dt + \sqrt{2} \, dB_t & \\
x_0=(q_0, p_0)=(0,0)
\end{cases}
\end{align}  
where $B_t=(B_t^1, \ldots, B_t^k)$ is a standard $k$-dimensional Brownian motion.  Here, $I_0= \{ k+1,\ldots, 2k\}$ and $I_1= \{ 1,2, \ldots, k \}$.  Thus the system~\eqref{eqn:LD} is noise propagating with $\dim=1$.  Furthermore,
\begin{align}
P^j_\text{L}(q,p)=
\begin{cases}
0 & \text{ if } j \in I_0 \\
p^j & \text{ if } j \in I_1
\end{cases}
\qquad \text{ and } \qquad 
\mathfrak{a}_j=
\begin{cases}
(1/2, 1/2) & \text{ if } j \in I_0 \\
(3/2,1/2) & \text{ if } j \in I_1.
\end{cases}
\end{align} 
\end{example}
\begin{example}
\label{ex:LD2}
Consider the same setup as in Example~\ref{ex:LD1}, but this time assume that the process $x_t=(q_t, p_t)$ starts from a general initial condition $x_0 =x\in \RR^n$ with $p_0 \neq 0$; that is, $x_t$ satisfies
\begin{align}
\begin{cases}
dq_t = p_t \, dt \\
dp_t= -p_t \, dt - \nabla U(q_t) \, dt + \sqrt{2} \, dB_t \\
x_0 =(q_0, p_0)\in \RR^n.
\end{cases}
\end{align}
Let $\bar{q}_t= q_t -q_0$ and $\bar{p}_t = p_t -p_0$ and note that $\bar{x}_t:=(\bar{q}_t, \bar{p}_t)$ solves
\begin{align}
\label{eqn:LDshift}
\begin{cases}
d\bar{q}_t = (\bar{p}_t+p_0)\, dt \\
d\bar{p}_t = -(\bar{p}_t +p_0) \, dt - \nabla U(\bar{q}_t+q_0) \, dt + \sqrt{2} \, dB_t \\
(\bar{q}_0, \bar{p}_0)=(0,0) \in \RR^n.
\end{cases}
\end{align}
Thus, for $\bar{x}_t$ the system~\eqref{eqn:LDshift} is also noise propagating with $I_0=\{ k+1, \ldots, 2k \}$, $I_1= \{ j \in I \setminus I_0 \, : \, p_0^j \neq 0 \}$ and $I_2 = \{ j \in I \setminus (I_0\cup I_1) \, : \, p_0^j =0 \}$. Furthermore, $\dim=2$ if there exists $j$ such that $p_0^j\neq 0$.  Otherwise, $\dim =1$.  Also, $P^j_\text{L}(x)=0$ and $\mathfrak{a}_j=(1/2,1/2)$ for $j\in I_0$.  However, this time for $j\notin I_0$ we have \begin{align*}
P^j_\text{L}(q,p) =\begin{cases}
p_0^j & \text{ if } p_0^j \neq 0 \\
p^j & \text{ if } p_0^j =0
\end{cases}\qquad \text{ and } \qquad \mathfrak{a}_j = \begin{cases}
(1,0) & \text{ if } p_0^j \neq 0 \\
(3/2,1/2) & \text{ if } p_0^j =0 .
\end{cases} 
\end{align*}  
\end{example}

\begin{remark}
\label{rem:LD}
In Example~\ref{ex:LD1} and Example~\ref{ex:LD2}, it is not necessary to assume that $U:\RR^n\rightarrow \RR$ is a polynomial. Indeed, so long as $U\in C^2(\RR^n)$ the same conclusions hold since $B_t$ still dominates the $p$-directions at time zero.    
\end{remark}

\begin{example}
\label{ex:lor96}
In this example, we consider a stochastically perturbed Lorenz '96 model.  The model describes the evolution of $x_t=(x_t^1, \ldots, x_t^n)$, $n\geq 4$, satisfying 
\begin{align}
\label{eqn:stochlor1}
\begin{cases}
dx^i_t = [(x^{i+1}_t-x^{i-2}_t) x^{i-1}_t - x^i_t] + \sigma^i \,dB^i \\
x^i_0=0
\end{cases}
\end{align}
where $\sigma^1, \sigma^2 >0$ and $\sigma^i =0$ for $i=3,\ldots, n$. In~\eqref{eqn:stochlor1}, $B_t=(B^1_t, \ldots, B^n_t)$ is a standard $n$-dimensional Brownian motion, and we assume that the coordinates satisfy the periodcity conditions
\begin{align*}
x^{-1}:= x^{n-1},\qquad x^{0}:=x^n \qquad\text{ and } \qquad x^{n+1}=x^1. 
\end{align*}
In this case, $I_0=\{ 1,2\}$ and $I_{j-2}= \{ j\}$ for $j=3,4,\ldots, n$ with $P_\text{L}^{1}=P_\text{L}^2=0$ and 
\begin{align}
\label{eqn:PLlor}
P_\text{L}^j (x)= \begin{cases}
- x^{j-2}x^{j-1} & \text{ if } j=3,4,\ldots, n -1\\
x^1 x^{n-1} & \text{ if } j=n.
\end{cases}
\end{align}
Thus the system~\eqref{eqn:stochlor1} is noise propagating with $\dim=n-2$.  Also,  $\mathfrak{a}_1=\mathfrak{a}_2=(1/2,1/2)$ and $\mathfrak{a}_j$, $j=3,4,\ldots, n-1$, satisfy the Fibonacci-like pattern
\begin{align*}
\mathfrak{a}_j = \mathfrak{a}_{j-1}+ \mathfrak{a}_{j-2}+ (1,0).
\end{align*}  
The scaling $\mathfrak{a}_n$, however, satisfies a slightly different relation 
\begin{align*}
\mathfrak{a}_n = (3/2, 1/2)+ \mathfrak{a}_{n-1}.
\end{align*}
\end{example}

\begin{remark}
Observe that if we assumed that $\sigma^1=0$ or $\sigma^2=0$ and $\sigma_3=\cdots = \sigma_n=0$ in~\eqref{eqn:stochlor1}, then the system is noise defective. 
\end{remark}

\begin{example}
\label{ex:sabra}
Next, we consider finite-dimensional approximations of the Sabra model, which was originally proposed as a shell model for turbulence~\cite{sabra_98}.  Fixing real parameters $\delta \in (0,2) \setminus \{ 1\}$ and $\alpha^i , \beta^i \geq 0$, the finite-dimensional projection of the shell-model is given, according to~\cite{BL_22}, by the following evolution of $u=(u^1, u^2, \ldots, u^J) \in \mathbf{C}^J$:
\begin{align}
\label{eqn:sabra}
\begin{cases}
d u^m= i 2^{m} \Big( \overline{u_{m+1}} u_{m+2} -\delta \overline{u_{m-1}} u_{m+1} - \frac{\delta-1}{4} u_{m-2} u_{m-1} \Big) \, dt - \delta 2^{2m} u^m \, dt + \alpha^m \, dB^{m}+i \beta^m \, dW^m\\
u_0 =0 \in \mathbf{C}^J,
\end{cases} 
\end{align} 
where $\{ B^m, W^m\, : \, m=1,2,\ldots, J\}$ is a collection of mutually independent standard, real-valued Brownian motions.  We assume that $u^{-1}=u^0=u^{J+1}=u^{J+2}=0$ and that $\alpha^1, \beta^1, \alpha^2, \beta^2 >0$, while $\alpha^m=\beta^m=0$ for $2\leq m \leq J$.  Furthermore, although each component $u^m$ in the projected Sabra model is complex-valued and, strictly speaking, the calculations performed above concern only real values, we will adapt the scaling analysis above and maintain the complex-valued nature of the equation.     

Under our assumptions, it follows that~\eqref{eqn:sabra} is noise propagating with complex-valued vector field $P_\text{L}$ defined by
\begin{align}
P_\text{L}^m(u) =\begin{cases}
0 & \text{ if } m=1,2 \\
-i 2^m \frac{\delta-1}{4} u^{m-2} u^{m-1} & \text{ if } m=3,\ldots, J
\end{cases}
\end{align}
Furthermore, because the real and imaginary parts of every direction in this equation have the same scaling, we can extend the definition $\mathfrak{a}_m$ to be the scaling for the complex direction $u^m$.  In particular, 
\begin{align*}
\mathfrak{a}_m= \begin{cases}
(1/2,1/2) & \text{ if } m=1,2 \\
\mathfrak{a}_{m-2}+ \mathfrak{a}_{m-1}+ (1,0) & \text{ if } m=3,\ldots, J.
\end{cases}
\end{align*}

\end{example}

\begin{example}
\label{ex:NPNH}
This example shows that noise propagation can be weaker than hypoellipticity as in~\cite{Hor_67}.  Consider the following SDE on $\RR^3$ started from the origin:
\begin{align}
\label{eqn:NP}
\begin{cases}
dx^1&= dB^1\\
dx^2&=x^1 \, dt\\
dx^3&= x^1 \, dt.
\end{cases} 
\end{align}
Then~\eqref{eqn:NP} is noise propagating with $\dim =1$, but the process lives on the subset $$\{ (x,y,y) \,: \, x,y\in \RR \}\subset \RR^3.$$  Thus the associated Markov kernel does is not absolutely continuous with respect to Lebesgue measure on $\RR^3$.  Hence, the diffusion is not hypoelliptic. 
\end{example}

\subsection{Consequences of scaling structure}
Suppose that the system~\eqref{eqn:SDEp} is noise propagating.  For $\epsilon \in (0, e^{-1})$, define a family of processes $x_{\epsilon,t}= (x_{\epsilon,t}^1, \ldots, x_{\epsilon,t}^n)$ by
\begin{align}
\label{eqn:LILrescaling}
x_{\epsilon,t}^j = \frac{x_{\epsilon t}^j}{\epsilon^{\mathfrak{a}_j}}.
\end{align} 
where $\mathfrak{a}_j$ is as in~\eqref{eqn:fraka}.  Also, we let $P_\text{L}\in \Poly(\RR^n)$ be such that its $j$th component is $P^j_\text{L}$.  Define\begin{align}
\label{eqn:BMscaled}
B_{\epsilon,t}= \frac{B_{\epsilon t}}{\sqrt{\epsilon}}. 
\end{align}

One of our main results in this section is the following theorem, which states that the scaling calculated in the previous section gives the correct form at the level of the stochastic differential equation to have a functional law of the iterated logarithm.  This connection is further elaborated below.      
\begin{theorem}
\label{thm:np}
Suppose that the system~\eqref{eqn:SDEp} is noise propagating.  Then for all $t< \tau_\infty/\epsilon$, $x_{\epsilon,t}$ solves an SDE of the form
\begin{align}
\label{eqn:LILeps}
d x_{\epsilon, t} = [P_\text{\emph{L}}(x_{\epsilon, t}) + R_{\epsilon, \text{\emph{L}}}(x_{\epsilon, t})] \, dt + \frac{\sigma dB_{\epsilon,t}}{\epsilon^{(0, 1/2)}} 
\end{align}
where $R_{\epsilon, \text{\emph{L}}}\in \Poly(\RR^n)$ is such that for all $C>0$, $\sup_{|x|\leq C} |R_{\epsilon, \text{\emph{L}}}(x) |\rightarrow 0$ as $\epsilon \rightarrow 0^+$.
\end{theorem}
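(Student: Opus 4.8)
The plan is to substitute both rescalings directly into the integral form of~\eqref{eqn:SDEp} and read off, component by component, the homogeneous leading term $P_\text{L}$ together with a remainder that is polynomial and vanishes uniformly on compact sets. Since the noise in~\eqref{eqn:SDEp} is additive and Brownian, writing~\eqref{eqn:SDEp} in integral form and substituting $v=\epsilon s$ gives, for every $t<\tau_\infty/\epsilon$,
\[
x_{\epsilon t} = \epsilon \int_0^t P(x_{\epsilon s})\, ds + \sqrt{\epsilon}\, \sigma B_{\epsilon, t},
\]
where $B_{\epsilon,\cdot}$ is the process from~\eqref{eqn:BMscaled}, which is again a standard Brownian motion with respect to the time-changed filtration $(\F_{\epsilon t})_{t\geq 0}$ and to which $x_{\epsilon,\cdot}$ is adapted. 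Dividing the $j$th component by $\epsilon^{\mathfrak{a}_j}$ and using $x_{\epsilon s}^i = \epsilon^{\mathfrak{a}_i} x_{\epsilon, s}^i$ produces
\[
x_{\epsilon, t}^j = \int_0^t \frac{\epsilon}{\epsilon^{\mathfrak{a}_j}}\, P^j\big( \epsilon^{\mathfrak{a}_1} x_{\epsilon, s}^1, \ldots, \epsilon^{\mathfrak{a}_n} x_{\epsilon, s}^n \big)\, ds + \frac{\sigma^j \sqrt{\epsilon}}{\epsilon^{\mathfrak{a}_j}}\, B_{\epsilon, t}^j ,
\]
which is the integral form of an SDE of the type~\eqref{eqn:LILeps} once the two coefficients are understood. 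For the diffusion coefficient: if $j\in I_0$ then $\sigma^j>0$ and $\mathfrak{a}_j=(1/2,1/2)$, so $\sigma^j\sqrt{\epsilon}/\epsilon^{\mathfrak{a}_j} = \sigma^j/\epsilon^{(0,1/2)}$; if $j\notin I_0$ then $\sigma^j=0$ and that component carries no noise, consistent with $\sigma/\epsilon^{(0,1/2)}$ having vanishing $j$th row. This identifies the noise term in~\eqref{eqn:LILeps}.

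Next I would analyze the drift coefficient $\tfrac{\epsilon}{\epsilon^{\mathfrak{a}_j}} P^j(\epsilon^{\mathfrak{a}_1} x^1, \ldots, \epsilon^{\mathfrak{a}_n} x^n)$, which is a polynomial in $x$ with $\epsilon$-dependent coefficients, so that $R_{\epsilon, \text{L}}$ is automatically a polynomial vector field. If $j\in I_0$, then $P^j_\text{L}\equiv 0$ and $\epsilon/\epsilon^{\mathfrak{a}_j}=\epsilon^{1/2}(\log\log\epsilon^{-1})^{-1/2}\to 0$, while $P^j(\epsilon^{\mathfrak{a}_1}x^1,\ldots,\epsilon^{\mathfrak{a}_n}x^n)\to P^j(0)$ uniformly on compact sets (each $\epsilon^{\mathfrak{a}_i}\to 0$ since $\proj_1\mathfrak{a}_i\geq 1/2$), so the whole drift coefficient, which we take to be $R^j_{\epsilon,\text{L}}$, tends to $0$ uniformly on compact sets. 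If $j\in I_{\ell+1}$, put $P^j_* := P^j(a_{\ell,1},\ldots,a_{\ell,n})$, so that $\mathfrak{a}_j=P^j_*+(1,0)$ and $\proj_1 P^j_*=m_{\ell+1}$. Because $P^j_\text{L}$ depends only on the variables indexed by $\cup_{k=1}^\ell I_k$, on which $\mathfrak{a}$ agrees with $a_\ell$, Lemma~\ref{lem:hom} gives $P^j_\text{L}(\epsilon^{\mathfrak{a}_1}x^1,\ldots)=\epsilon^{P^j_*}P^j_\text{L}(x)$, hence $\tfrac{\epsilon}{\epsilon^{\mathfrak{a}_j}}P^j_\text{L}(\epsilon^{\mathfrak{a}_1}x^1,\ldots)=\tfrac{\epsilon\cdot\epsilon^{P^j_*}}{\epsilon^{\mathfrak{a}_j}}P^j_\text{L}(x)=P^j_\text{L}(x)$, the leading term. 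Expanding $P^j-P^j_\text{L}$ in standard form, each monomial $m$ contributes $\tfrac{\epsilon}{\epsilon^{\mathfrak{a}_j}}\,\epsilon^{m(\mathfrak{a}_1,\ldots,\mathfrak{a}_n)}\,m(x)$ to the drift, and since $\mathfrak{a}_j=P^j_*+(1,0)$ and the ordering $\preceq$ is translation invariant, this term tends to $0$ uniformly on compact sets as soon as $P^j_*\prec m(\mathfrak{a}_1,\ldots,\mathfrak{a}_n)$. This last inequality I would verify in two cases: if $m$ involves only variables indexed by $\cup_{k=0}^\ell I_k$, then $m(\mathfrak{a}_1,\ldots,\mathfrak{a}_n)=m(a_{\ell,1},\ldots,a_{\ell,n})\succ P^j_*$ by~\eqref{eq:elopo}; if $m$ involves a variable indexed by some $r\in I_k$ with $k>\ell$, then $\proj_1 m(\mathfrak{a}_1,\ldots,\mathfrak{a}_n)\geq\proj_1\mathfrak{a}_r=m_k+1>m_{\ell+1}=\proj_1 P^j_*$, using $\proj_1\mathfrak{a}_s\geq 1/2$ for every $s$ and the monotonicity $m_{\ell+1}<m_{\ell+2}<\cdots$ of the thresholds.

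The step I expect to require the most care is the bookkeeping behind these last inequalities: relating evaluation of monomials under the partially-infinite intermediate scalings $(a_{\ell,1},\ldots,a_{\ell,n})$ that drive the inductive procedure of Section~\ref{sec:ILdet} to evaluation under the everywhere-finite scalings $(\mathfrak{a}_1,\ldots,\mathfrak{a}_n)$ appearing in the rescaled equation, and in particular establishing the threshold monotonicity. For the latter I would argue that for any direction $j$ not yet reached after step $\ell+1$ one has $a_{\ell+1,j}=\infty$, and every monomial of $P^j$ that is finite under $(a_{\ell+1,1},\ldots,a_{\ell+1,n})$ either uses only variables from $\cup_{k=0}^\ell I_k$, in which case its first scaling already exceeds $m_{\ell+1}$ since $j\notin I_{\ell+1}$, or uses a variable from $I_{\ell+1}$, whose first scaling is $m_{\ell+1}+1$; in either case $\proj_1 P^j(a_{\ell+1,1},\ldots,a_{\ell+1,n})>m_{\ell+1}$, and taking the minimum over such $j$ yields $m_{\ell+2}>m_{\ell+1}$, so the full chain follows by induction. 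Together with Lemma~\ref{lem:hom} and the elementary bound $\proj_1\mathfrak{a}_s\geq 1/2$, this closes the argument and shows $R_{\epsilon,\text{L}}\in\Poly(\RR^n)$ with $\sup_{|x|\leq C}|R_{\epsilon,\text{L}}(x)|\to 0$ as $\epsilon\to 0^+$ for every $C>0$.
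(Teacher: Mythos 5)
Your proposal is correct, and at the level of the theorem itself it follows the paper's computation exactly: rescale the integral form of~\eqref{eqn:SDEp}, split into the cases $j\in I_0$ and $j\in I_{\ell+1}$, identify the noise coefficient $1/\epsilon^{(0,1/2)}$, and use Lemma~\ref{lem:hom} (together with $\mathfrak{a}_j = P^j(a_{\ell,1},\ldots,a_{\ell,n})+(1,0)$ and the fact that $P^j_{\text{L}}$ only involves already-reached variables) to peel off $P^j_{\text{L}}$ as the exact leading term, leaving a polynomial remainder $R_{\epsilon,\text{L}}$. The one place where you diverge is the justification that the remainder's coefficients vanish: the paper cites Lemma~\ref{lem:auxin}, i.e.\ the comparison $P^j_{\text{L}}(\mathfrak{a}_1,\ldots,\mathfrak{a}_n)\prec (P^j-P^j_{\text{L}})(\mathfrak{a}_1,\ldots,\mathfrak{a}_n)$, whose proof is factored through Proposition~\ref{prop:compare}; you instead verify the inequality monomial by monomial, splitting into monomials supported on $\cup_{k\le \ell}I_k$ (handled by~\eqref{eq:elopo}, since $\mathfrak{a}$ agrees with $a_\ell$ there) and monomials containing a variable from some $I_k$ with $k>\ell$ (handled by $\proj_1\mathfrak{a}_r=m_k+1$ together with your inductive proof that $m_{\ell+1}<m_{\ell+2}<\cdots$). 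That inline argument is sound -- it is in effect a self-contained reproof of the second assertion of Lemma~\ref{lem:auxin}, with the threshold monotonicity playing the role of the lemma's first assertion -- and it is arguably more elementary since it never passes through the abstract decomposition of Proposition~\ref{prop:compare}. What the paper's factored route buys is reusability: the same proposition also yields Lemma~\ref{lem:compare2} for the distributional scaling, whereas your argument is tailored to the LIL scalings at hand. Your implicit use of the fact that $a\prec b$ forces $\epsilon^{(1,0)+b-\mathfrak{a}_j}=\epsilon^{b^1-a^1}(\log\log\epsilon^{-1})^{b^2-a^2}\to 0$ is at the same level of explicitness as the paper's own proof, so no gap there.
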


In order to prove Theorem~\ref{thm:np}, we first need a technical auxiliary result.  

\begin{lemma}
\label{lem:auxin}
Suppose~\eqref{eqn:SDEp} is noise propagating.  Then $\mathfrak{a}_m\prec \mathfrak{a}_j$ whenever $m\in I_\ell$ and $j \in I_{\ell'}$ with $\ell < \ell'$.  Furthermore, for any $j\in I\setminus I_0$, we have 
\begin{align*}
P^j_{\text{\emph{L}}}(\mathfrak{a}_1, \ldots, \mathfrak{a}_n) \prec (P^j-P^j_{\text{\emph{L}}})(\mathfrak{a}_1, \ldots, \mathfrak{a}_n). 
\end{align*}  
\end{lemma}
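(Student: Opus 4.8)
The plan is to establish the two assertions of Lemma~\ref{lem:auxin} separately, both by induction on the level $\ell$ in the construction of Section~\ref{sec:ILdet}, tracking carefully how the scaling $a_{\ell, j}$ evolves as $\ell$ increases. For the first assertion, the key observation is the monotonicity built into the recursion~\eqref{eq:iadf}: for $j \in I_{\ell+1}$ we set $a_{\ell+1, j} = P^j(a_{\ell, 1}, \ldots, a_{\ell, n}) + (1,0)$, and for all other indices $a_{\ell+1, j} = a_{\ell, j}$. First I would record the basic fact that once an index $j$ enters some $I_\ell$ its scaling is frozen for all subsequent steps, so that $\mathfrak{a}_j = a_{\ell, j} = a_{\ell', j}$ for all $\ell' \ge \ell$; this reduces the comparison $\mathfrak{a}_m \prec \mathfrak{a}_j$ for $m \in I_\ell$, $j \in I_{\ell'}$, $\ell < \ell'$, to a comparison of the $\proj_1$-components together with a tie-breaking analysis. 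The cleanest route is to show $\proj_1 \mathfrak{a}_m \le \proj_1 \mathfrak{a}_j$ and that equality forces $\mathfrak{a}_m \prec \mathfrak{a}_j$ via the second coordinate (or via strictness somewhere). Concretely, by definition of $m_{\ell+1}$ the quantity $\proj_1 P^j(a_{\ell, \cdot})$ over $j$ not yet assigned is minimized on $I_{\ell+1}$, so $\proj_1 \mathfrak{a}_j = m_{\ell'} + 1$ for $j \in I_{\ell'}$; one then needs that the sequence $m_1 \le m_2 \le \cdots$ is nondecreasing, which follows because the arguments $a_{\ell, \cdot}$ only increase in the $\preceq$-order as $\ell$ grows and $P^j$ evaluated at $\preceq$-larger scalings produces $\preceq$-larger (hence $\proj_1$-not-smaller) values — this uses that replacing some $a_{\ell, i} = \infty$-or-finite entries by $\preceq$-larger ones cannot decrease $\proj_1$ of any monomial scaling, since monomial scalings are sums $n_1 a_1 + \cdots$ and $\proj_1$ of a sum is the sum of $\proj_1$'s, each of which is monotone. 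Care is needed with the $(1,0)$ shift and with the case $m \in I_0$ where $\mathfrak{a}_m = (1/2, 1/2)$: here I would note $\proj_1 \mathfrak{a}_j = m_{\ell'} + 1 \ge 1 > 1/2 = \proj_1 \mathfrak{a}_m$ once one checks $m_1 \ge 0$, so the comparison is immediate, with the possible edge case $m_1 = 0$ handled by the fact that even then $\proj_1 \mathfrak{a}_j \ge 1 > 1/2$.

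For the second assertion, I would simply unwind the definitions at the final level. Fix $j \in I \setminus I_0$, say $j \in I_{\ell+1}$ for the appropriate $\ell$. By construction~\eqref{eq:elopo}, $P^j_{\text{L}}$ is the homogeneous-in-standard-form part of $P^j$ satisfying
\begin{align*}
P^j_\text{L}(a_{\ell,1}, \ldots, a_{\ell,n}) \prec (P^j-P^j_\text{L})(a_{\ell,1}, \ldots, a_{\ell,n}).
\end{align*}
The point is to upgrade this from $a_{\ell, \cdot}$ to $\mathfrak{a} = a_{\dim, \cdot}$. Here I would use the structural remark already recorded in Section~\ref{sec:ILdet}: for $j \in I_{\ell+1}$, $P^j_\text{L}(x) = P^j_\text{L}(\pi_{\cup_{k=1}^\ell I_k}(x))$, i.e. $P^j_\text{L}$ depends only on variables with indices in $\cup_{k=1}^{\ell} I_k$ — precisely those indices whose scalings were already frozen by step $\ell$. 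Hence $P^j_\text{L}(a_{\ell, \cdot}) = P^j_\text{L}(\mathfrak{a})$ exactly, with no change. For the remaining part $P^j - P^j_\text{L}$, its monomials may involve variables not yet frozen at step $\ell$, whose scalings can only increase (in $\preceq$) when passing from $a_{\ell, \cdot}$ to $\mathfrak{a}$; by the monotonicity of monomial scaling evaluation under $\preceq$-increasing arguments, each such monomial scaling is $\preceq$-larger at $\mathfrak{a}$ than at $a_{\ell, \cdot}$, hence so is the minimum, i.e. $(P^j - P^j_\text{L})(a_{\ell, \cdot}) \preceq (P^j - P^j_\text{L})(\mathfrak{a})$. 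Chaining these: $P^j_\text{L}(\mathfrak{a}) = P^j_\text{L}(a_{\ell, \cdot}) \prec (P^j - P^j_\text{L})(a_{\ell, \cdot}) \preceq (P^j - P^j_\text{L})(\mathfrak{a})$, which is the claim.

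\textbf{Main obstacle.} The bookkeeping lemma I expect to be the real work is the precise monotonicity statement: if $a_1 \preceq b_1, \ldots, a_k \preceq b_k$ in $\cals_\infty$ (allowing $\infty$), then for any polynomial $p$ in standard form, $p(a_1, \ldots, a_k) \preceq p(b_1, \ldots, b_k)$, and moreover $\proj_1$ is monotone in the appropriate sense even when some entries are $\infty$. The subtlety is that the order $\preceq$ is \emph{not} the product order — it is lexicographic with the second coordinate reversed — so "larger $a_i$" interacts delicately with sums: $a \preceq b$ and $a' \preceq b'$ does give $a + a' \preceq b + b'$, but one must verify this directly from the definition (checking the cases $a^1 + a'^1 < b^1 + b'^1$ versus equality, and in the equality case that $a^2 + a'^2 \ge b^2 + b'^2$, which holds since each summand inequality points the right way). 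Once this monomial-level monotonicity and the $\proj_1$-monotonicity (including with $\infty$ entries, where $\proj_1 \infty = \infty$ dominates) are nailed down, both halves of the lemma are short. I would state this monotonicity as a small sublemma and prove it first, then run the two inductions above. The only remaining care-point is ensuring the recursion is well-defined at the step I invoke — i.e. that $j \in I_{\ell+1}$ means $P^j(a_{\ell, \cdot}) \ne \infty$, which is exactly the non-stopping hypothesis, so $P^j_\text{L}$ genuinely exists and is a nonzero homogeneous polynomial.
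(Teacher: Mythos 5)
Your high-level structure (induction over the levels $I_\ell$, freezing of scalings, reduction of the second claim to a stability statement when passing from $a_{\ell,\cdot}$ to $\mathfrak{a}$) matches the paper's, but the engine you propose to drive both halves is backwards, and this is a genuine gap rather than a cosmetic one. In the order of Section~\ref{sec:scales}, $\infty$ is the \emph{largest} element ($b\prec a_\infty$ for every $b\in\cals$), and the only entries of $(a_{\ell,1},\ldots,a_{\ell,n})$ that ever change are those passing from the unassigned value $\infty$ to a finite scaling via \eqref{eq:iadf}. So the arguments strictly \emph{decrease} in $\preceq$ as $\ell$ grows, and consequently $P^j(a_{\ell+1,1},\ldots,a_{\ell+1,n})\preceq P^j(a_{\ell,1},\ldots,a_{\ell,n})$ — the opposite of what you assert. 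Your monotonicity sublemma (sum-compatibility of $\preceq$) is fine, but applied in the correct direction it gives nothing toward $m_\ell\le m_{\ell+1}$, and your fallback tie-break "via the second coordinate" is never justified (nor needed). What actually yields $m_{\ell+1}>m_\ell$, and hence $\proj_1\mathfrak{a}_m<\proj_1\mathfrak{a}_j$ strictly so that no tie-breaking arises, is a two-part argument absent from your sketch: (a) any monomial of $P^j$ that becomes finite only at stage $\ell$ must contain a factor $x^i$ with $i\in I_\ell$, so its $\proj_1$-scaling is at least $\proj_1 a_{\ell,i}=m_\ell+1$; and (b) the monomials already finite at stage $\ell-1$ cannot achieve $m_\ell$ because $j$ was not selected into $I_\ell$ — this is the paper's "$Q$ is not a minimizer in the definition of $m_\ell$" step.

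The same reversal breaks your second chain. The inequality $(P^j-P^j_\text{L})(a_{\ell,1},\ldots,a_{\ell,n})\preceq(P^j-P^j_\text{L})(\mathfrak{a}_1,\ldots,\mathfrak{a}_n)$ is false in general: passing from $a_{\ell,\cdot}$ to $\mathfrak{a}$ replaces $\infty$-entries by finite scalings, so monomials of $P^j-P^j_\text{L}$ involving not-yet-assigned variables drop from scaling $\infty$ to finite values and the minimum can only decrease (it can even drop from $\infty$ to a finite value when every monomial of $P^j-P^j_\text{L}$ contains an unfrozen variable, which \eqref{eq:elopo} permits). The real content — packaged in the paper as Proposition~\ref{prop:compare} — is that this drop is harmless: each newly finite monomial contains some $x^i$ with $i$ assigned at stage $\ell+1$ or later, and $\proj_1\mathfrak{a}_i\ge m_{\ell+1}+1>m_{\ell+1}=\proj_1 P^j_\text{L}(a_{\ell,1},\ldots,a_{\ell,n})$, so the new monomial scalings still lie strictly above $P^j_\text{L}(\mathfrak{a}_1,\ldots,\mathfrak{a}_n)$; note that this uses the first statement of the lemma (or the construction itself) as input, a dependence your proposal never identifies. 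Your observation that $P^j_\text{L}(a_{\ell,\cdot})=P^j_\text{L}(\mathfrak{a})$ because $P^j_\text{L}$ involves only already-frozen variables is correct and is used by the paper as well, but without a correct replacement for the false monotonicity step the chain $P^j_\text{L}(\mathfrak{a})\prec(P^j-P^j_\text{L})(\mathfrak{a})$ does not close.
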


Lemma~\ref{lem:auxin} is a consequence of the following result. \begin{proposition}
\label{prop:compare}
Let $2\leq k\leq n$, $a_1, \ldots, a_{k-1} \in \cals$, and suppose $q_1, q_2:\RR^n \rightarrow \RR$ are polynomials with 
\begin{align}
\label{eqn:qoqt}
q_1(a_1, \ldots, a_{k-1}, \infty_{n-k}) \prec q_2(a_1, \ldots, a_{k-1}, \infty_{n-k})
\end{align}
where $\infty_{n-k} \in (\mathscr{S}_\infty)^{n-k}$ is given by $\infty_{n-k}=(\infty, \infty, \ldots, \infty)$. 
If $a_k, \ldots, a_n \in \cals$ are such that $$q_1(a_1, \ldots, a_{k-1}, \infty_{n-k}) \prec a_m$$ for every $m=k, \ldots, n$, then  
\begin{align*}
q_1(a_1, \ldots, a_n)= q_1(a_1, \ldots, a_{k-1}, \infty_{n-k}) \prec q_2(a_1, \ldots, a_n). 
\end{align*}
\end{proposition}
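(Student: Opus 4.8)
The plan is to peel the statement down to a fact about individual monomials. Write $s_1:=q_1(a_1,\dots,a_{k-1},\infty_{n-k})$ and $s_2:=q_2(a_1,\dots,a_{k-1},\infty_{n-k})$. The first, cheap observation is that $s_1\prec s_2$ forces $s_1\in\cals$: the only element not strictly preceded by anything is $\infty$, so $s_1\ne\infty$, and in particular $q_1$ has at least one standard-form monomial depending on none of $x^k,\dots,x^n$. Next I would split $q_i=q_i^{\mathrm o}+q_i^{\mathrm n}$ for $i=1,2$, where $q_i^{\mathrm o}$ collects the standard-form monomials of $q_i$ involving only $x^1,\dots,x^{k-1}$ and $q_i^{\mathrm n}$ collects the rest; both pieces are again in standard form, so for any scalings $b_1,\dots,b_n$ one has $q_i(b_1,\dots,b_n)=\min\{q_i^{\mathrm o}(b_1,\dots,b_n),\,q_i^{\mathrm n}(b_1,\dots,b_n)\}$. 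Since $q_i^{\mathrm o}$ does not see $x^k,\dots,x^n$, its value is unaffected by the last arguments, and every monomial of $q_i^{\mathrm n}$ evaluates to $\infty$ once those arguments are $\infty$; hence $s_1=q_1^{\mathrm o}(a_1,\dots,a_n)$ and $s_2=q_2^{\mathrm o}(a_1,\dots,a_n)$ (with the convention that the empty polynomial evaluates to $\infty$, so $s_2=\infty$ is allowed).

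The crux is then the claim: for every monomial $m$ occurring in $q_1^{\mathrm n}$ or in $q_2^{\mathrm n}$, $m(a_1,\dots,a_n)\succ s_1$. Granting it, the conclusion follows quickly. On one hand $q_1(a_1,\dots,a_n)=\min\{s_1,\,q_1^{\mathrm n}(a_1,\dots,a_n)\}=s_1$, because $q_1^{\mathrm n}(a_1,\dots,a_n)$ is the minimum of a finite family (possibly empty, giving $\infty$) each member of which strictly succeeds $s_1$, hence itself strictly succeeds $s_1$. On the other hand $q_2(a_1,\dots,a_n)=\min\{s_2,\,q_2^{\mathrm n}(a_1,\dots,a_n)\}\succ s_1$, since $s_2\succ s_1$ by hypothesis and $q_2^{\mathrm n}(a_1,\dots,a_n)\succ s_1$ by the claim. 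Combining, $q_1(a_1,\dots,a_n)=s_1\prec q_2(a_1,\dots,a_n)$, which is exactly the assertion.

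It remains to prove the claim, and this is the only step with real content. Fix such a monomial $m=\alpha(x^1)^{n_1}\cdots(x^n)^{n_n}$ and an index $i_0\in\{k,\dots,n\}$ with $n_{i_0}\ge 1$, and set $v:=m(a_1,\dots,a_n)=\sum_i n_i a_i\in\cals$; I must show $s_1\prec v$. I would argue by comparing first components, and this is where it is essential that every scaling in play has strictly positive first component — which is automatic in our setting, since every scaling produced by the procedure of Section~\ref{sec:ILdet} (in particular every $\mathfrak{a}_j$ relevant to Lemma~\ref{lem:auxin}) has first component at least $1/2$. Using this, $v^1=\sum_i n_i a_i^1\ge a_{i_0}^1$, while $s_1\prec a_{i_0}$ (hypothesis, as $i_0\ge k$) gives $s_1^1\le a_{i_0}^1$; thus $s_1^1\le a_{i_0}^1\le v^1$. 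If $v^1>s_1^1$ we are done. Otherwise $v^1=a_{i_0}^1=s_1^1$, and since every $a_i^1>0$ the equality $\sum_i n_i a_i^1=a_{i_0}^1$ forces $n_{i_0}=1$ and $n_i=0$ for $i\ne i_0$, i.e. $m=\alpha x^{i_0}$ and $v=a_{i_0}$, whence $s_1\prec a_{i_0}=v$ by hypothesis. This proves the claim. The subtlety one must respect here, and the reason positivity of the first components is not a cosmetic assumption, is that under $\preceq$ a larger second component is \emph{more} dominant at fixed first component; so extra factors accumulating in the second coordinate could in principle make a "new" monomial dominate $s_1$, and it is precisely the strictly positive first components that force the first-component comparison to be decisive and rule this out.
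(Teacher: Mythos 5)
Your argument is, in substance, the paper's own proof: the paper writes $q_i = R_{i1}(x^1,\ldots,x^{k-1},0)+\sum_{j\geq k}x^jR_{ij}(x)$, which is exactly your split into monomials in the old variables and monomials containing a new variable, and in both arguments the crux is that a monomial containing some $x^m$ with $m\geq k$ has scaling equal to $a_m$ plus a further contribution, hence strictly succeeds $s_1\prec a_m$. The one place you depart from the paper is the place that matters: you add the hypothesis that every scaling in play has strictly positive first component, which is not part of the statement of Proposition~\ref{prop:compare} — and you are right that it cannot simply be dropped. Without it the proposition as literally stated is false: take $n=3$, $k=3$, $q_1=x^1$, $q_2=(x^1)^2+x^2x^3$, $a_1=(1,1)$, $a_2=(0,1)$, $a_3=(1,0)$. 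Then $q_1(a_1,a_2,\infty)=(1,1)\prec(2,2)=q_2(a_1,a_2,\infty)$ and $(1,1)\prec(1,0)=a_3$, yet $q_2(a_1,a_2,a_3)=\min\{(2,2),\,a_2+a_3\}=(1,1)=q_1(a_1,a_2,a_3)$, so the conclusion fails. The paper's proof makes the same tacit assumption at the step asserting $a_m\preceq q_2(a_1,\ldots,a_n)=a_m+\hat{R}_{2m}(a_1,\ldots,a_n)$: adding a scaling whose first component is $0$ and whose second component is positive moves one strictly \emph{down} in the order $\preceq$, which is precisely the failure mode you flag. So your restricted version is the statement that is actually true and actually used: every finite scaling produced by the inductive procedures of Sections~\ref{sec:IL} and~\ref{sec:lawscale} has first component at least $1/2$, so the applications in Lemma~\ref{lem:auxin} and Lemma~\ref{lem:compare2} are unaffected. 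Apart from this extra (and warranted) care at the delicate step, your route and the paper's coincide.
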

\begin{proof}
Letting $0_{n-k}$ denote the zero vector in $\RR^{n-k}$, observe that we can write
\begin{align*}
q_1(x^1, \ldots, x^n) = R_{11}(x^1, \ldots, x^{k-1}, 0_{n-k})+ \sum_{j=k}^n x^j R_{1j}(x^1, \ldots, x^n),\\
q_2(x^1, \ldots, x^n)= R_{21}(x^1, \ldots, x^{k-1}, 0_{n-k})+ \sum_{j=k}^n x^j R_{2j}(x^1, \ldots, x^n)
\end{align*}
for some polynomials $R_{ij}$.  If we set $\hat{R}_{11}(x_1, \ldots, x_{k-1})= R_{11}(x_1, \ldots, x_{k-1}, 0_{n-k})$ and \\$\hat{R}_{21}(x_1, \ldots, x_{k-1}) = R_{21}(x_1, \ldots, x_{k-1}, 0_{n-k})$, then $q_1(a_1, \ldots, a_{k-1}, \infty_{n-k}) \prec a_m$ for every $m=k,\ldots, n$ implies  
\begin{align*}
q_{1}(a_1, \ldots, a_{k-1}, \infty_{n-k}) = \hat{R}_{11}(a_1, \ldots, a_{k-1}) = q_1(a_1, \ldots, a_{n}).  
\end{align*}  
On the other hand, either
\begin{align*}
q_2(a_1, \ldots, a_n) = \hat{R}_{21}(a_1, \ldots, a_{k-1}) = q_2(a_1, \ldots, a_{k-1}, \infty_{n-k}), 
\end{align*}
in which case the conclusion follows from~\eqref{eqn:qoqt},
or  for some $m\in \{k,\ldots, n \}$
\begin{align*}
a_m \preceq q_2(a_1, \ldots, a_n) =a_m+ \hat{R}_{2m}(a_1, \ldots, a_n). 
\end{align*}
and the conclusion follows from $q_1(a_1, \ldots, a_{k-1}, \infty_{n-k})\prec a_m$.  
\end{proof}

\begin{proof}[Proof of Lemma~\ref{lem:auxin}]
For the first statement, by transitivity of $\prec$ it suffices to show that $\mathfrak{a}_m \prec \mathfrak{a}_j$ if $m \in I_{\ell}$ and $j \in I_{\ell + 1}$.  If $\ell = 0$,  then $\mathfrak{a}_m = (1/2, 1/2)$ and 
by \eqref{eq:iadf} we have
$\mathfrak{a}_j \succeq (1, 0)$,  and therefore $\mathfrak{a}_m \prec \mathfrak{a}_j$.  If $\ell \geq 1$,  then without loss of generality, assume that 
\begin{align*}
\bigcup_{s = 0}^{\ell - 1} I_{s} = \{1, \ldots,  k\} \qquad  \text{ and } \qquad  
I_\ell = \{k+1, \ldots,  k + j_\ell\}
\end{align*}
 for some $k \geq 1$ and $j_\ell \geq 0$.     
By \eqref{eq:iadf},  we obtain 
$\mathfrak{a}_j  = a_{\ell+1,j} = P^j(a_{\ell, 1}, \ldots, a_{\ell, n}) + (1, 0)$.  Then,  there are polynomials $Q : \RR^k \to \RR$,  $(R_m)_{m = k+1}^{k+j_\ell} : \RR^n \to \RR$,  $(S_m)_{m = k + j_\ell + 1}^n  : \RR^n \to \RR$ such that 
\begin{equation}\label{eq:ggeae}
P^j(x^1, \ldots,  x^n) = Q(x^1, \ldots, x^k) + \sum_{i = k+1}^{k+j_\ell} x^i R_i(x^1,  \ldots,  x^n) +   \sum_{i = k+j_\ell+1}^{n} x^i S_i(x^1,  \ldots,  x^n) \,. 
\end{equation}
Since $i \not \in \bigcup_{s = 0}^{\ell} I_{s}$ for $i > k+ j_\ell$, we obtain that $\proj_1 (a_{\ell, i} S_i(a_{\ell, 1}, \ldots, a_{\ell, n})) = \infty$, whenever $S_i \neq 0$, and therefore
we do not have to consider the last sum in  \eqref{eq:ggeae}. 
If $Q \neq 0$,  then
\begin{align*}
 \proj_1 P^j(a_{\ell, 1}, \ldots, a_{\ell, n}) &= \min_{i \in \{k+1, \ldots, k+j_\ell\}, R_i \neq 0} \{  \proj_1 Q(a_{\ell, 1}, \ldots, a_{\ell, k}),   \proj_1( a_{\ell, i} +  R_i(a_{\ell, 1}, \ldots, a_{\ell, n})) \} \\
&\geq
\min_{i \in \{k+1, \ldots, k+j_\ell\},  R_i \neq 0} \{ \proj_1 Q(a_{\ell - 1, 1}, \ldots, a_{\ell - 1, k}),    \proj_1 a_{\ell, i} \} \\
&>
\min_{i \in \{k+1, \ldots, k+j_\ell\},  R_i \neq 0} \{ \proj_1a_{\ell,m} - 1,   \proj_1 a_{\ell, i} \} = \proj_1a_{\ell, m} - 1
\,,
\end{align*}
where we used that $a_{\ell, r} = a_{\ell - 1,  r}$ for $r \in \{1, \ldots,  k\} = \bigcup_{s = 0}^{\ell - 1} I_{s}$ and the fact that $Q$ is not a minimizer in the definition of $m_{\ell}$.    
If $Q = 0$,  then there is an $i$ such that $R_i \neq 0$,  and 
we obtain 
\begin{align*}
 \proj_1 P^j(a_{\ell, 1}, \ldots, a_{\ell, n}) &= \min_{i \in \{1, \ldots, k\}, R_i \neq 0} \{ \proj_1( a_{\ell, i} +  R_i(a_{\ell, 1}, \ldots, a_{\ell, n})) \} \\
&>
\min_{i \in \{1, \ldots, k\}, R_i \neq 0} \{   \proj_1 a_{\ell, i} \} >  \proj_1 a_{\ell, m} 
\,,
\end{align*}
and the first statement follows from $ \proj_1 a_{\ell+1,j} =\proj_1 ( P^j(a_{\ell, 1}, \ldots, a_{\ell, n})) + 1  > \proj_1 a_{\ell, m}$.  

For the second statement fix any $j \in I \setminus I_0$.    Since the system is noise propagating, there exists $\ell \geq 0$ such that $j \in I_{\ell+1}$, and there exists $P^j_L$ such that \eqref{eq:elopo} holds.  
%If $j \in I_0$, then $P_{\text{L}}^j = 0$ the argument is an easier version of the proof below,  and therefore we assume $j \geq 1$. 
Assume without loss of generality that $\bigcup_{i = 0}^\ell I_i = \{1, \ldots,  k\}$ for some $k \geq 1$, for otherwise we permute the indices.  
By \eqref{eqn:fraka} we have that 
$a_{\ell, m} = \mathfrak{a}_m$ for any $m \leq k$ and by the definition of $I_\ell$, it follows that $\mathfrak{a}_{\ell, m} = \infty$ if $m >k$. 

Define $q_1 = P_\text{L}^j$ and $q_2 = P^j - P^j_\text{L}$ and by \eqref{eq:elopo},  the assumption \eqref{eqn:qoqt} holds and by the definition of $P_{\text{L}}^j$
\begin{align*}
q_1 (\mathfrak{a}_1, \ldots, \mathfrak{a}_{k}, \infty_{n - k}) =  P^j_\text{L}(a_{\ell, 1}, \ldots, \mathfrak{a}_{\ell, n}) = P^j(a_{\ell, 1}, \ldots, a_{\ell, n})  = a_{\ell+1, j} - (1, 0)\,.
\end{align*}
In particular,  for any $j \in I_{\ell + 1}$ we have $q_1 (\mathfrak{a}_1, \ldots, \mathfrak{a}_{k}, \infty_{n - k}) \prec a_{\ell+1, j} = \mathfrak{a}_j$.  For $j \in I_{\ell'}$ with $\ell' \geq \ell + 1$, 
$q_1 (\mathfrak{a}_1, \ldots, \mathfrak{a}_{k}, \infty_{n - k}) \prec \mathfrak{a}_j$  follows from the first statement of the lemma.  Finally, the second assertion of the lemma  
is a consequence of Proposition~ \ref{prop:compare}.   
\end{proof}

\begin{proof}[Proof of Theorem~\ref{thm:np}]
Supposing that $\epsilon t < \tau_\infty$, we separate the proof into two cases.  

Case 1: $j\in I_0$.  Then we have
\begin{align*}
d x_{\epsilon,t}^j  &= \frac{\epsilon}{\epsilon^{(1/2, 1/2)}} P^j(x_{\epsilon t}) \, dt + \frac{1}{\epsilon^{(0,1/2)}} \sigma^j dB_{\epsilon, t}^j \\
&= \frac{\epsilon}{\epsilon^{(1/2,1/2)}} P^j\big( \epsilon^{\mathfrak{a}_1} x_{\epsilon,t}^1, \ldots, \epsilon^{\mathfrak{a}_n}  x_{\epsilon, t}^n\big)\, dt + \frac{1}{\epsilon^{(0,1/2)}} \sigma^j dB_{\epsilon, t}^j .\end{align*}  
Recalling that $P^j_\text{L}(x)\equiv 0$ when $j\in I_0$, we define 
\begin{align*}
R_{\epsilon, \text{L}}^j(x) =\frac{\epsilon}{\epsilon^{(1/2,1/2)}} P^j\big( \epsilon^{\mathfrak{a}_1}x^1, \ldots, \epsilon^{\mathfrak{a}_n} x^n \big)\end{align*} 
and note that, since $\epsilon/\epsilon^{(1/2, 1/2)}\rightarrow 0$ as $\epsilon \rightarrow 0^+$, $R_{\epsilon, \text{L}}^j\rightarrow 0$ as $\epsilon \rightarrow 0^+$ uniformly on compact subsets of $\RR^n$. 

Case 2: $j\notin I_0$.  In this case, we note that 
\begin{align*}
dx_t^j = P^j(x_t) \, dt
\end{align*}  
since $\sigma^j=0$ for all $j\notin I_0$.  Hence, 
\begin{align*}
d x_{\epsilon,t}^j &= \frac{\epsilon}{\epsilon^{\mathfrak{a}_j}} [P^j_\text{L}(x_{\epsilon t}) + (P^j-P^j_\text{L})(x_{\epsilon t})]\, dt.
\end{align*} 
Next, by construction and Lemma~\ref{lem:hom}, we have 
\begin{align*}
\frac{\epsilon}{\epsilon^{\mathfrak{a}_j}} P^j_{\text{L}} \big(\epsilon^{\mathfrak{a}_1} x^1, \ldots, \epsilon^{\mathfrak{a}_n} x^n) =  P_\text{L}^j(x^1, \ldots, x^n),  
\end{align*}
so that, by homogeneity, 
\begin{align*}
\frac{\epsilon}{\epsilon^{\mathfrak{a}_j}} P^j_\text{L}(x_{\epsilon t})= P^j_\text{L}( x_{\epsilon, t}^1, \ldots, x_{\epsilon,t}^n)= P^j_\text{L}(x_{\epsilon,t}). 
\end{align*}
Next, define for any $x\in \RR^n$ and $\epsilon >0$
\begin{align*}
R_{\epsilon, \text{L}}^j(x):= \frac{\epsilon}{\epsilon^{\mathfrak{a}_j}} (P^j-P^j_\text{L})( \epsilon^{\mathfrak{a}_1}x^1, \ldots, \epsilon^{\mathfrak{a}_n} x^n).
\end{align*}
Since  
$$P^j_\text{L}(\mathfrak{a}_1, \ldots, \mathfrak{a}_n) \prec (P^j-P^j_\text{L})( \mathfrak{a}_1, \ldots, \mathfrak{a}_n)$$ by Lemma~\ref{lem:auxin},
it follows that $R_{\epsilon, \text{L}}^j(x)\rightarrow 0$ as $\epsilon \rightarrow 0^+$ uniformly on compact subsets of $\RR^n$.  
\end{proof}

Considering the structure of equation~\eqref{eqn:LILeps}, at first glance it may appear as though we have not captured the dynamics correctly as $\epsilon \rightarrow 0^+$ due to the presence of the small term, namely $1/\epsilon^{(0,1/2)}$, in front of the Brownian motion.  However, this is not true.  In fact, we are able to characterize the set of limit points of the trajectories $t\mapsto x_{\epsilon, t}$ as $\epsilon \rightarrow 0^+$.

More precisely, suppose that~\eqref{eqn:SDEp} is noise propagating and consider the following control problem
\begin{align}
\label{eqn:controlLIL}
\begin{cases}
\dot{x}=  P_\text{L}(x) + \sigma \dot{f}\\
x_0=x\in \RR^n
\end{cases}
\end{align}
where $f\in\HH$.  That is, we consider the family of trajectories $t\mapsto \varphi_t(P_\text{L}, f)x$ for $f\in \HH$.  Because $P_\text{L}$ is a polynomial vector field, and thus may contain nonlinearities, it is not completely clear that $t\mapsto \varphi_t(P_\text{L}, f)x$ is defined for all times $t\in[0,1]$.  However, $P_\text{L}$ has a specific structure and we claim that~\eqref{eqn:controlLIL} is explicitly integrable, which in particular implies that $t\mapsto \varphi_{ t}(P_\text{L},f)x$ is defined on $[0,1]$ for any $f\in \HH$.  Specifically, fixing $f\in \mathscr{H}$, recall that for $j\in I_0$ we have $P_\text{L}^j=0$.  Hence, for $j\in I_0$
\begin{align}
\label{eqn:iterate3cont}
\varphi_{t}^j(P_\text{L},f)x = x^j+ \sigma^j f_t^j,   
\end{align}
which is well-defined and bounded for all times $t\in [0,1]$.  Next, for $j\in I_{k+1}$ with $1\leq k+1\leq \dim(P, \sigma)$, we have $\sigma^j=0$ and 
\begin{align}
\label{eqn:iterate1cont}
P^j_\text{L}(x)=P^j_\text{L}\big(\pi_{\cup_{\ell=0}^{k}I_\ell} (x)\big)
\end{align}
so that
\begin{align}
\label{eqn:iterate2cont}
\varphi_{t}^j(P_\text{L},f)x =x^j+ \int_0^t P^j_\text{L}(\pi_{\cup_{\ell=0}^{k}I_\ell} (\varphi_{s}(P_\text{L}, f)x)) \, ds.
\end{align}
Proceeding by induction, we obtain that the integrand is a bounded function, and therefore $\varphi_t^j(P_L, f)x$ is bounded for any $j$.  
Thus, $t\mapsto \varphi_{t}(P_\text{L}, f)x$ is well-defined on $[0,1]$ for any $f\in \HH, \, x\in \RR^n$ by the structure of $P_\text{L}$.  

Define the \emph{Cramer transform} $\lambda :\HH\rightarrow [0, \infty]$  and a subset $\CMcal{I}\subset \HH$ by 
\begin{align*}
\lambda (g)= \inf\bigg\{\tfrac{1}{2} \textstyle{\int_0^1 |\dot{f}_s|^2 \, dt} \, : \, f\in \HH \text{ and } \varphi_{\cdot}(P_\text{L}, f)0=g_{\cdot} \bigg\} \quad \text{ and } \quad
\CMcal{I}= \{ g \in \HH \, : \, \lambda(g) \leq 1 \}. 
\end{align*}
As a corollary of Theorem~\ref{thm:np}, we obtain the following result which relates the solution of~\eqref{eqn:LILeps} and the trajectories of the control problem~\eqref{eqn:controlLIL}.
\begin{theorem}
\label{thm:LIL}
Suppose that the equation~\eqref{eqn:SDEp} is noise propagating.  Then there exists a constant $\epsilon_*\in (0, e^{-1})$ such that for $\PP$-almost every $\omega$:
\begin{itemize}
\item[(i)] The set $\mathcal{X}(\omega) := \{ x_{\epsilon, \cdot}(\omega) \}_{\epsilon \in (0, \epsilon_*)}$ is relatively compact in $\E$.
\item[(ii)] $d(x_{\epsilon, \cdot}(\omega), \mathscr{I}) \rightarrow 0$ as $\epsilon \rightarrow 0^+$.
\item[(iii)]  For every $h\in \CMcal{I}$, there exists a subsequence $\epsilon_j(\omega, h) \downarrow 0$ as $j\rightarrow \infty$ such that 
\begin{align*}
d( x_{\epsilon_j(\omega,h)}(\omega), h) \rightarrow 0\quad \text{ as }\quad  j\rightarrow \infty. 
\end{align*}
\end{itemize}
\end{theorem}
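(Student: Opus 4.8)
The plan is to recognize Theorem~\ref{thm:LIL} as a functional law of the iterated logarithm, obtained by transferring the classical Strassen LIL for Brownian motion through the SDE~\eqref{eqn:LILeps} provided by Theorem~\ref{thm:np}. The starting point is the observation that $B_{\epsilon,\cdot} = B_{\epsilon\cdot}/\sqrt{\epsilon}$ is, for each fixed $\epsilon$, again a standard Brownian motion, but the relevant statement is the \emph{functional} LIL: if one instead writes $W_{\epsilon,\cdot} := B_{\epsilon\cdot}/(\sqrt{2\epsilon \log\log \epsilon^{-1}}) = \sigma^{-1}\cdot(\text{noise term of }\eqref{eqn:LILeps})$ up to the diagonal constants, then by Strassen's theorem the family $\{W_{\epsilon,\cdot}\}_{\epsilon}$ is $\PP$-a.s.\ relatively compact in $\C$ with limit set exactly $\HH_1$ (the unit ball in Cameron--Martin norm), and moreover $d(W_{\epsilon,\cdot},\HH_1)\to 0$ while every $h\in\HH_1$ is a subsequential limit. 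Note $\epsilon^{(0,1/2)} = (\log\log\epsilon^{-1})^{1/2}$, so the noise term in~\eqref{eqn:LILeps} is precisely $\sigma\,dB_{\epsilon,t}/(\log\log\epsilon^{-1})^{1/2} = \sqrt{2}\,\sigma\,dW_{\epsilon,t}$; thus $x_{\epsilon,\cdot} = \varphi_\cdot(P_{\text{L}} + R_{\epsilon,\text{L}},\, \sqrt{2}\,W_{\epsilon,\cdot})\,0$, a controlled ODE driven by $W_{\epsilon,\cdot}$ with a perturbed vector field $P_{\text{L}}+R_{\epsilon,\text{L}}$ that converges to $P_{\text{L}}$ uniformly on compacts.

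The key steps, in order, are as follows. First, establish a deterministic continuity/stability lemma for the solution map: if $g_n \to g$ uniformly on $[0,1]$ and $Q_n \to P_{\text{L}}$ uniformly on compact subsets of $\RR^n$, with all $g_n, g$ lying in a bounded set of $\C$, then $\varphi_\cdot(Q_n, c\,g_n)0 \to \varphi_\cdot(P_{\text{L}}, c\,g)0$ in $d$ (for the relevant scalar $c=\sqrt{2}$). Here the crucial input is the explicit triangular/integrable structure of $P_{\text{L}}$ displayed in~\eqref{eqn:iterate3cont}--\eqref{eqn:iterate2cont}: one proves the convergence of components by induction on the layers $I_0, I_1, \ldots, I_{\dim}$, using that $P_{\text{L}}^j$ for $j\in I_{k+1}$ depends only on the already-controlled components with indices in $\cup_{\ell=0}^k I_\ell$, and that those components are uniformly bounded (hence the arguments of $P_{\text{L}}^j$ stay in a fixed compact set where $Q_n \to P_{\text{L}}$ uniformly and $P_{\text{L}}^j$ is Lipschitz). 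This also handles the a priori bound needed to keep everything in a compact set and shows $\tau_\infty/\epsilon \to \infty$ is not an obstruction: the rescaled trajectories are non-explosive in the limit, and for $\epsilon$ small the perturbation $R_{\epsilon,\text{L}}$ is small enough that $x_{\epsilon,\cdot}$ survives on all of $[0,1]$, so we may restrict to a shrinking interval $(0,\epsilon_*)$ and work in $\C\subset\E$. Second, apply Strassen's functional LIL to $W_{\epsilon,\cdot}$ to obtain, on a full-measure event: (a) relative compactness of $\{W_{\epsilon,\cdot}\}$ in $\C$, (b) $d(W_{\epsilon,\cdot}, \HH_1)\to 0$, (c) every $h\in\HH_1$ is a subsequential limit. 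Third, push (a)--(c) through the map $g\mapsto \varphi_\cdot(P_{\text{L}},\sqrt 2 g)0$, using the stability lemma to absorb the vanishing perturbation $R_{\epsilon,\text{L}}$: relative compactness of $\{W_{\epsilon,\cdot}\}$ plus the stability lemma gives (i); the image of $\HH_1$ under $g\mapsto\varphi_\cdot(P_{\text{L}},\sqrt2 g)0$ is exactly $\mathscr I$ (unwinding the definition of $\lambda$ and noting the $\sqrt 2$ matches the normalization $\tfrac12\int|\dot f|^2 \le 1$), so (b) gives (ii); and (c) together with continuity gives (iii).

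The main obstacle I anticipate is the stability lemma of the first step, specifically controlling the nonlinearity of $P_{\text{L}}$ and the perturbation $R_{\epsilon,\text{L}}$ simultaneously. One must show that for $\epsilon$ small and $W_{\epsilon,\cdot}$ in a bounded set, the solution $x_{\epsilon,\cdot}$ of~\eqref{eqn:LILeps} does not escape to infinity before time $1$; this is where the layered structure is essential, since $P_{\text{L}}$ is generally \emph{not} globally Lipschitz, but each component's ODE has a right-hand side that is bounded once the lower layers are controlled, so an inductive Gronwall-type argument closes. A secondary technical point is the passage from a fixed bounded set to genuine relative compactness in the non-metric space $(\E, d)$: one needs an Arzel\`a--Ascoli argument, with equicontinuity of $\{x_{\epsilon,\cdot}\}$ following from equicontinuity of $\{W_{\epsilon,\cdot}\}$ (itself part of Strassen's theorem, as $\HH_1$ is equicontinuous and the family clusters on it) transported through the (locally) Lipschitz solution map. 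Finally, care is needed so that the single full-measure event and the single constant $\epsilon_*$ work uniformly for all three conclusions; this is routine once the deterministic stability lemma is in hand, since (i)--(iii) then follow pathwise on the Strassen event.
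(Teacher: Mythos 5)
Your proposal is correct in outline, but it takes a genuinely different route from the paper. The paper's proof of Theorem~\ref{thm:LIL} is essentially a citation: Theorem~\ref{thm:np} puts the rescaled process in the form~\eqref{eqn:LILeps}, and then the functional LIL is obtained by invoking the general result \cite[Theorem 2.6]{CFH_22}, whose Assumption 2 is verified via \cite[Corollary 2.10]{CFH_22} precisely because the noise in~\eqref{eqn:LILeps} is additive. You instead re-derive that machinery in this special case: Strassen's functional LIL for the rescaled Brownian motion, transferred through the solution map $g\mapsto \varphi_\cdot(P_\text{L},g)0$ by a deterministic stability lemma that exploits the layered, explicitly integrable structure of $P_\text{L}$ (induction over $I_0,\ldots,I_{\dim}$, Gr\"onwall plus localization to absorb the vanishing perturbation $R_{\epsilon,\text{L}}$ and rule out explosion before time $1$ for small $\epsilon$, Arzel\`a--Ascoli for relative compactness). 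What each approach buys: the paper's argument is two lines and delegates all the analytic work to an existing general theorem; yours is self-contained and makes visible exactly where the triangular structure and the additivity of the noise enter (continuity of the solution map in the sup norm of the control, uniform bounds layer by layer), at the cost of having to carry out the stability lemma and compactness argument in full --- in effect re-proving \cite[Theorem 2.6]{CFH_22} for this class of equations. One point of bookkeeping to fix when writing this up: with the paper's convention $\HH_1=\{f:\tfrac12\int_0^1|\dot f|^2\,ds\le 1\}$, Strassen's limit set for $W_{\epsilon,\cdot}=B_{\epsilon\cdot}/\sqrt{2\epsilon\log\log\epsilon^{-1}}$ is $\{h:\int_0^1|\dot h|^2\,ds\le 1\}$, not $\HH_1$; it is the integrated noise $\sqrt2\,W_{\epsilon,\cdot}=B_{\epsilon\cdot}/\sqrt{\epsilon\log\log\epsilon^{-1}}$ that clusters on $\HH_1$, and then the cluster set of $x_{\epsilon,\cdot}$ is $\varphi_\cdot(P_\text{L},\HH_1)0=\CMcal{I}$, consistent with the normalization $\tfrac12\int|\dot f|^2$ in the Cramer transform $\lambda$. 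Your statement ``the $\sqrt2$ matches the normalization'' shows you intend exactly this, but as written the identification of the limit set of $W_{\epsilon,\cdot}$ with $\HH_1$ and the subsequent composition with $g\mapsto\varphi_\cdot(P_\text{L},\sqrt2 g)0$ would double-count the factor $\sqrt2$; also, in part (iii) one should note that the infimum defining $\lambda$ is attained (weak compactness of $\HH_1$ plus continuity of the solution map), so that each $h\in\CMcal I$ is realized by some $f\in\HH_1$.
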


\begin{proof}
This result follows by combining Theorem~\ref{thm:np} above with~\cite[Theorem 2.6]{CFH_22}.  The fact that the noise in equation~\eqref{eqn:LILeps} is additive allows us to check Assumption 2 in the statement of~\cite[Theorem 2.6]{CFH_22} by way of \cite[Corollary 2.10]{CFH_22}.  
\end{proof}

\begin{remark}
Observe that Theorem~\ref{thm:LIL} shows that the set of limit points of the path $t\mapsto x_{\epsilon,t}(\omega)$ as $\epsilon \rightarrow 0^+$ is for almost all $\omega$ the set of trajectories $\mathscr{I}$. \end{remark}

For $t\in [0,1]$, define the collection of points
\begin{align}
\label{eqn:accessbounded}
\A_{\text{L},t}(x)  =\{y \in \RR^n \, : \, \varphi_{t}(P_\text{L}, f)x=y \text{ for some } f\in \HH_{1} \}.
\end{align}
By evaluation at time $t=1$ in the mapping $t\mapsto x_{\epsilon, t}$, it follows by Theorem~\ref{thm:LIL} that the set of limit points as $\epsilon \rightarrow 0^+$ of the sequence of random variables $x_{\epsilon, 1}$ is given by 
\begin{align}
\label{eqn:limitptfixed}
\cl(\A_{\text{L},1}(0)).
\end{align}
Finding practical methods to characterize points in the set $\A_{\text{L},1}(0)$ is of major importance, which is discussed below in Section~\ref{sec:control}.

\section{Scaling in law}
\label{sec:lawscale}
The goal of this section is to establish the asymptotic behavior of the process at time zero in the sense of distributions.  Similar to Section~\ref{sec:ILdet}, we use scalings $\mathscr{S}_\infty$ as introduced in~\eqref{def:scalings2}, but we neglect logarithmic corrections, which corresponds to the second component in the scaling.  In particular, we employ \emph{power scalings}; that is, $a =(a^1, a^2)\in \cals$ with $a^2=0$.  As in Section~\ref{sec:ILdet}, we proceed inductively in order to identify the correct scale in each direction.  Although the results are, not surprising, similar, we remark that the power scaling algorithm can retain more terms compared to the case of the LIL scaling, and therefore the limiting dynamics may be different.  See Example~\ref{ex:rDr} for a concrete example.

\subsection{Determination of scalings}
First recall that $I_0\subset I$ is given by  
\begin{align*}
I_0= \{ j \,: \,\sigma^j > 0\}.
\end{align*}
For any $j \in I$, define $b_{0,j} \in \cals_\infty$ by
\begin{align}
b_{0,j} = \begin{cases}
(1/2,0) & \text{ if } j \in J_0\\
\infty & \text{ otherwise}.  
\end{cases}
\end{align}
For any $j\in I_0$, define $P_{\text{D}}^j(x^1, \ldots, x^n)=0$.  Also, note that $\proj_1 b_{0, j}= \proj_1 a_{0, j}$ for all $j$.

%Next, if either $I\setminus I_0=\emptyset$ or $P^j(a_{0,1}, \ldots, a_{0,n}) =\infty$ for all $j\in I\setminus I_0$, then the procedure stops and we set $I_\ell = \emptyset$ for $\ell \geq 1$.  Otherwise, recalling $m_1$ and $I_1$ from~\eqref{def:m_1} and~\eqref{def:I_1} respectively, observe that 
%\begin{align*}
%m_1&= \min\{ \proj_1 P^j(b_{0,1}, \dots, b_{0,n}) \, : \, j \in I\setminus I_0\},\\ 
%I_1&= \{ j \in I\setminus I_0 \,: \, \proj_1 P^j( b_{0,1}, \ldots, b_{0,n} ) =m_1\}. 
%\end{align*}  
%  Set
%\begin{align}
%b_{1,j}=\begin{cases} 
%(m_1,0) +(1,0) &\text{ if }j \in I_1\\
%b_{0, j} & \text{ otherwise}
%\end{cases}  .
%\end{align}
%For $j\in I_1$, let $P_{\text{D}}^j(x^1, \ldots, x^n)$ denote the polynomial in $P^j(x^1, \ldots, x^n)$ which is homogeneous of degree $(m_1,0)$ and such that $$(m_1,0) \prec (P^j - P^j_{\text{D}})(b_{0,1}, \ldots, b_{0,n}). $$  By definition, for $j\in I_1$, $P^j_{\text{D}}(x)=P^j_{\text{D}}(\pi_{I_0}(x))$; that is, $P^j_\text{D}$ only depends on the variables whose indices belong to $I_0$.  Otherwise, it would have infinite scaling with respect to $(b_{0,1}, \ldots, b_{0,n})\in (\cals_\infty)^n$.  Also, note that $\proj_1 b_{1,j}= \proj_1 a_{1,j}$ for all $j$.    

Inductively, for $\ell \geq 0$ if either $I\setminus \cup_{k=0}^\ell I_k= \emptyset$ or $P^j(b_{\ell, 1}, \ldots, b_{\ell, n}) =\infty$ for all $j\in I\setminus \cup_{k=0}^\ell I_\ell$, then the procedure stops and we set $I_j =\emptyset$ for $j\geq \ell+1$.  Otherwise, recalling the definitions of $m_{\ell+1}$ and $I_{\ell+1}$ we have 
\begin{align*}
m_{\ell+1}&=\min \{ \proj_1 P^j(b_{\ell,1}, \ldots, b_{\ell,n}) \, : \, j \in I \setminus \cup_{k=0}^\ell I_k \},\\
I_{\ell+1}&=\{ j \in  I \setminus \cup_{k=0}^\ell I_k \, : \, \proj_1 P^j(b_{\ell,1}, \ldots, b_{\ell,n}) =m_{\ell+1}\}. \end{align*}
Set
\begin{align}
b_{\ell+1,j} = \begin{cases}
(m_{\ell+1},0) +(1,0) & \text{ if }  j \in I_{k+1}\\
b_{\ell,j} & \text{ otherwise}.
\end{cases}
\end{align}
For $j\in I_{\ell+1}$, let $P^j_{\text{D}}$ denote the polynomial in $P^{j}(x^1, \ldots, x^n)$ which is homogeneous of degree $(m_{\ell+1},0)$ and such that $$ (m_{\ell+1},0) \prec (P^j - P^j_{\text{D}})(b_{\ell,1}, \ldots, b_{\ell,n}) .$$

Supposing that the system is noise propagating, the inductive process above must stop and we recall that 
\begin{align*}
 \bigcup_{k=0}^{\dim(P, \sigma)} I_k=I\end{align*}
where $\dim(P, \sigma)$ is the dimension of propagation.  For $j\in I$, setting
\begin{align}
\mathfrak{b}_j = b_{\dim(P,\sigma), j},
\end{align}
we see that $\mathfrak{b}_j \in \cals$ and $\proj_1 \mathfrak{b}_j = \proj_1 \mathfrak{a}_j$ for all $j$ by construction. 
%\begin{example}
%\label{ex:rDr}
%Consider the following SDE on $\RR^5$
%\begin{align*}
%dx^1&= dB^1\\
%dx^2 &= dB^2\\
%dx^3&= x^1 x^2 \, dt\\
%dx^4&=x^1 (x^3)^2+ x^3 (x^1)^5 \, dt  \\
%dx^5&= x^1 (x^3)^2\, dt
%\end{align*}
%where $B^1_t, B^2_t$ are independent, standard Brownian motions on $\RR$ and the solution starts at the origin in $\RR^5$.  
%Following each algorithm, $I_0=J_0=\{ 1,2\}$, $I_1=J_1=\{ 3\}$, $I_2=\{4\}$, $J_2=\{ 4,5\}$ and $I_3=\{5\}$.  Thus $\dim_L=3$ while $\dim_D=2$.  Furthermore, we find that $P^1_{\text{D}}=P^2_{\text{D}}= P^1_{\text{L}}=P^2_{\text{L}}=0$ and $P^3_{\text{L}}=P^3_{\text{D}}= x^1 x^2$.  However, $P^4_{\text{L}}= x^3 (x^1)^5$ and $P^5_L= x^1 (x^3)^2$ while $P^4_{\text{D}}(x^1, x^3) = x^1 (x^3)^2+ x^3 (x^1)^5$ and $P^5_D= x^1(x^3)^2$.  So, $P_D\neq P_L$ and it is possible that $\dim_L> \dim_D$.   
%\end{example}

\begin{remark}
It can be readily shown that for $j\in I_0 \cup I_1$, $P^j_\text{L} = P^j_{\text{D}}$.  Furthermore, for all $j$, $P^j_{\text{D}}$ contains all terms in $P^j_\text{L}$.  The second assertion follows since it is a weaker requirement for terms to have the same scaling under power scalings.    
\end{remark}

\begin{example}
\label{ex:rDr}This example shows that, in general, $P_L \neq P_D$.  
Consider the following SDE on $\RR^4$
\begin{align*}
dx^1&= dB^1\\
dx^2 &= dB^2\\
dx^3&= x^1 x^2 \, dt\\
dx^4&=x^1 (x^3)^2+ x^3 (x^1)^5 \, dt 
\end{align*}
where $B^1_t, B^2_t$ are independent, standard Brownian motions on $\RR$ and the solution starts at the origin in $\RR^4$.  Then $I_0= \{1,2\}$, $I_1=\{ 3\}$, and $I_2= \{4\}$ so that $\dim=2$.  Also, $P^1_\text{L}=P^2_\text{L}=P^1_\text{D}=P^2_\text{D}=0$, $P^3_\text{L}=P_\text{D}^3=x^1x^2$, while $P^4_\text{L}= x^3(x^1)^5$ and $P^4_\text{D}=x^1 (x^3)^2+ x^3(x^1)^5$.

\end{example}

\begin{example}
\label{ex:lor92part2}
We saw that the stochastically perturbed Lorenz '96 model in Example~\ref{ex:lor96} is noise propagating with $\dim=n-2$.  Moreover, we have that $P_\text{D}=P_\text{L}$ with $\mathfrak{b}_j =(\mathfrak{a}_j^1, 0)$ 
for any $j=1,2,\ldots, n$, where the $\mathfrak{a}_j$'s are as in Example~\ref{ex:lor96}. 
\end{example}

\begin{example}
\label{ex:sabra2}
Returning to the context of the projected Sabra model in Example~\ref{ex:sabra}, we already saw that the system is noise propagating.  Moreover, in this case we also have that $P_\text{D}=P_\text{L}$ with $\mathfrak{b}_m =(\mathfrak{a}_m^1, 0)$ 
for any $m=1,2,\ldots, J$, where the $\mathfrak{a}_j$'s are as in Example~\ref{ex:sabra}. 
\end{example}
\subsection{Consequences of the distributional rescaling}
Supposing that the system~\eqref{eqn:SDEp} is noise propagating, introduce a vector field $P_\text{D}\in \Poly(\RR^n)$ with $j$th component given by $P_\text{D}^j$ and process $t\mapsto y_{\epsilon,t}\in \E$ defined by
\begin{align}
\label{eqn:rescaledist}
y_{\epsilon, t} = \big( \tfrac{x_{\epsilon t}^1}{\epsilon^{\mathfrak{b}_1}}, \ldots,\tfrac{x_{\epsilon t}^n}{\epsilon^{\mathfrak{b}_n}}\big).
\end{align}
Our next goal is to show that $t\mapsto y_{\epsilon,t}$ converges as $\epsilon \rightarrow 0^+$ in distribution on the space $\E$.  To prove this claim, we first state a lemma, which follows from Proposition~\ref{prop:compare} in a similar way to Lemma~\ref{lem:auxin}. 
\begin{lemma}
\label{lem:compare2}
Suppose that the system~\eqref{eqn:SDEp} is noise propagating.  Then for any $j\in I$
\begin{align}
P^j_\text{\emph{D}}(\mathfrak{b}_1, \ldots, \mathfrak{b}_n) \prec (P^j-P^j_\text{\emph{D}})( \mathfrak{b}_1, \ldots, \mathfrak{b}_n). 
\end{align}
\end{lemma}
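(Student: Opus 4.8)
The plan is to mimic the proof of Lemma~\ref{lem:auxin}, reducing everything to Proposition~\ref{prop:compare}. Fix $j\in I$. If $j\in I_0$ then $P^j_{\text{D}}=0$ and the claim is that $(P^j)(\mathfrak{b}_1,\ldots,\mathfrak{b}_n)=\infty$ is false only in the trivial sense; more precisely $P^j_{\text{D}}(\mathfrak{b}_1,\ldots,\mathfrak{b}_n)=\infty$ by the convention that the zero polynomial has scaling $\infty$, so the strict inequality holds automatically (unless $P^j\equiv P^j_{\text{D}}=0$, in which case both sides are $\infty$ and we interpret the statement as vacuous, exactly as in the $j\in I_0$ part of Theorem~\ref{thm:np}, Case 1). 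So assume $j\notin I_0$, i.e. $j\in I_{\ell+1}$ for some $\ell\geq 0$, and let $P^j_{\text{D}}$ be the homogeneous polynomial of degree $(m_{\ell+1},0)$ singled out by the construction, which by definition satisfies $(m_{\ell+1},0)\prec (P^j-P^j_{\text{D}})(b_{\ell,1},\ldots,b_{\ell,n})$.

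Next I would set up the application of Proposition~\ref{prop:compare}. After permuting indices, assume $\bigcup_{i=0}^{\ell} I_i=\{1,\ldots,k\}$. By construction $b_{\ell,m}=\mathfrak{b}_m$ for $m\leq k$, while $b_{\ell,m}=\infty$ for $m>k$; thus in the notation of Proposition~\ref{prop:compare} we take $q_1=P^j_{\text{D}}$, $q_2=P^j-P^j_{\text{D}}$, the scalings $a_1,\ldots,a_{k}$ equal to $\mathfrak{b}_1,\ldots,\mathfrak{b}_k$, and $\infty_{n-k}$ in the remaining slots (so here the ``$k$'' of the proposition is our $k+1$). Hypothesis~\eqref{eqn:qoqt} is exactly the defining property of $P^j_{\text{D}}$ evaluated at $(b_{\ell,1},\ldots,b_{\ell,n})$, since $P^j_{\text{D}}$ depends only on the variables indexed by $\bigcup_{i=0}^\ell I_i$ (indeed by $\bigcup_{i=1}^\ell I_i$ — the terms in $P^j$ involving variables with index $>k$ all acquire scaling $\infty$ and hence never appear in $P^j_{\text{D}}$). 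Then $q_1(\mathfrak{b}_1,\ldots,\mathfrak{b}_k,\infty_{n-k})=(m_{\ell+1},0)$, and I need $(m_{\ell+1},0)\prec \mathfrak{b}_m$ for every $m=k+1,\ldots,n$; this is precisely the analogue of the first statement of Lemma~\ref{lem:auxin} for the power scalings, namely $\mathfrak{b}_m\prec \mathfrak{b}_{m'}$ when $m\in I_\ell$, $m'\in I_{\ell'}$, $\ell<\ell'$, together with $\mathfrak{b}_{m}=(m_{\ell+1},0)+(1,0)\succ(m_{\ell+1},0)$ for $m\in I_{\ell+1}$. The ordering statement for the $\mathfrak{b}_j$ follows from $\proj_1\mathfrak{b}_j=\proj_1\mathfrak{a}_j$ (noted in the construction) plus the first part of Lemma~\ref{lem:auxin}, or can be reproved directly by the same argument since it only involves $\proj_1$. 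Granting these, Proposition~\ref{prop:compare} yields $q_1(\mathfrak{b}_1,\ldots,\mathfrak{b}_n)\prec q_2(\mathfrak{b}_1,\ldots,\mathfrak{b}_n)$, which is the assertion.

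The one point requiring a little care — and the likely main obstacle — is verifying that the ``leading part'' $P^j_{\text{D}}$ extracted at step $\ell+1$ using the truncated scalings $b_{\ell,\cdot}$ is still the leading part when we pass to the final scalings $\mathfrak{b}_\cdot=b_{\dim,\cdot}$, i.e. that no term of $P^j$ which was subleading at stage $\ell$ becomes leading (or ties) at the final stage. This is exactly what the hypothesis $q_1(\ldots,\infty_{n-k})\prec a_m$ in Proposition~\ref{prop:compare} is designed to control, so the obstacle is really just bookkeeping: confirming that every variable $x^m$ with $m\in I_{\ell'}$, $\ell'\geq \ell+1$, has $\mathfrak{b}_m$ strictly above the degree $(m_{\ell+1},0)$ of $P^j_{\text{D}}$, which is the monotonicity of the $\proj_1\mathfrak{b}$'s across the blocks $I_0,I_1,\ldots$. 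I would state that monotonicity as a one-line consequence of Lemma~\ref{lem:auxin} (via $\proj_1\mathfrak{b}_j=\proj_1\mathfrak{a}_j$) and then invoke Proposition~\ref{prop:compare} verbatim, so the whole proof is only a few lines.
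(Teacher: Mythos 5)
Your proof is correct and is essentially the paper's own argument: the paper offers no separate proof beyond the remark that the lemma ``follows from Proposition~\ref{prop:compare} in a similar way to Lemma~\ref{lem:auxin}'', and your reduction --- taking $q_1=P^j_{\text{D}}$, $q_2=P^j-P^j_{\text{D}}$, the scalings $\mathfrak{b}_1,\ldots,\mathfrak{b}_k$ on $\bigcup_{i\le \ell}I_i$ and $\infty$ elsewhere, checking hypothesis~\eqref{eqn:qoqt} from the defining property of $P^j_{\text{D}}$, and verifying $(m_{\ell+1},0)\prec \mathfrak{b}_m$ for the later blocks via $\proj_1\mathfrak{b}_m=\proj_1\mathfrak{a}_m$ and the first part of Lemma~\ref{lem:auxin} --- is exactly that argument. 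One small caveat: for $j\in I_0$ the inequality does not ``hold automatically'' (the left-hand side is $\infty$, which is maximal for $\preceq$); this case is a harmless imprecision in the lemma's statement itself, since, as you observe, the lemma is only invoked for $j\notin I_0$.
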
 
Recalling that $B_{\epsilon,t}= B_{\epsilon t}/\sqrt{\epsilon}$, we have the following result which gives a natural candidate for the distributional limit of $t\mapsto y_{\epsilon, t}$ in $\E$ as $\epsilon\rightarrow 0^+$.  We formalize this assertion in Corollary~\ref{cor:dist} below. 
\begin{theorem}
\label{thm:dist}
Suppose that equation~\eqref{eqn:SDEp} is noise propagating and recall the random variable $\tau_\infty$ is the time of existence for the solution of~\eqref{eqn:SDEp}.  Then for any $t<\tau_\infty/\epsilon$ we have
\begin{align}
\label{eqn:Deps}
\begin{cases}
dy_{\epsilon, t} = P_\text{\emph{D}}(y_{\epsilon,t})\, dt  + R_{\epsilon, \text{\emph{D}}}(y_{\epsilon,t}) \, dt + \sigma dB_{\epsilon,t}\\
y_{\epsilon,0}=0
\end{cases}
\end{align}
where $B_{\epsilon,t}$ is as in~\eqref{eqn:BMscaled}  and $R_{\epsilon, \text{\emph{D}}}\in \Poly(\RR^n)$ is such that $\sup_{|x| \leq C} |R_{\epsilon, \text{\emph{D}}}(x)|\rightarrow 0$ as $\epsilon \rightarrow 0^+$ for all $C>0$.  
\end{theorem}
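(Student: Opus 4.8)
The plan is to mirror, almost verbatim, the structure of the proof of Theorem~\ref{thm:np}, splitting into the cases $j\in I_0$ and $j\notin I_0$, but now tracking the power scalings $\mathfrak{b}_j$ in place of the full scalings $\mathfrak{a}_j$. First I would apply the chain rule to $y_{\epsilon,t}^j = x_{\epsilon t}^j/\epsilon^{\mathfrak{b}_j}$: using the SDE~\eqref{eqn:SDEp} and the fact that $\mathfrak{b}_j$ has zero second component (so $\epsilon^{\mathfrak{b}_j}=\epsilon^{\proj_1\mathfrak{b}_j}$ is a pure power of $\epsilon$), one gets
\begin{align*}
dy_{\epsilon,t}^j = \frac{\epsilon}{\epsilon^{\mathfrak{b}_j}}P^j\big(\epsilon^{\mathfrak{b}_1}y_{\epsilon,t}^1,\ldots,\epsilon^{\mathfrak{b}_n}y_{\epsilon,t}^n\big)\,dt + \frac{1}{\epsilon^{\proj_1\mathfrak{b}_j - 1/2}}\,\sigma^j\,dB_{\epsilon,t}^j.
\end{align*}
For $j\in I_0$ we have $\mathfrak{b}_j=(1/2,0)$, so the noise coefficient is exactly $\sigma^j$ (matching the clean $\sigma\,dB_{\epsilon,t}$ term in~\eqref{eqn:Deps}, with no $\epsilon^{(0,1/2)}$ correction — this is the one genuine difference from Theorem~\ref{thm:np}), and since $P^j_\text{D}\equiv 0$ on $I_0$ the entire drift $\epsilon^{1-\proj_1\mathfrak{b}_j}P^j(\cdots)$ is absorbed into $R^j_{\epsilon,\text{D}}$; the prefactor $\epsilon^{1/2}\to 0$ gives the desired uniform-on-compacts decay.

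For $j\notin I_0$ we have $\sigma^j=0$, so the equation for $y_{\epsilon,t}^j$ is a pure ODE. I would write $P^j = P^j_\text{D} + (P^j-P^j_\text{D})$. Homogeneity of $P^j_\text{D}$ under $(\mathfrak{b}_1,\ldots,\mathfrak{b}_n)$ of degree $(m_{\ell+1},0)$ combined with $\mathfrak{b}_j = (m_{\ell+1},0)+(1,0)$ — here one must note $\proj_1\mathfrak{b}_j = m_{\ell+1}+1$, and because all scalings involved are power scalings the full degree is $(m_{\ell+1},0)$, not merely its first coordinate — lets Lemma~\ref{lem:hom} give
\begin{align*}
\frac{\epsilon}{\epsilon^{\mathfrak{b}_j}}P^j_\text{D}\big(\epsilon^{\mathfrak{b}_1}x^1,\ldots,\epsilon^{\mathfrak{b}_n}x^n\big) = \epsilon^{1-(m_{\ell+1}+1)}\epsilon^{m_{\ell+1}}P^j_\text{D}(x) = P^j_\text{D}(x),
\end{align*}
so this term contributes exactly $P^j_\text{D}(y_{\epsilon,t})$, the main drift in~\eqref{eqn:Deps}. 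The remainder is $R^j_{\epsilon,\text{D}}(x):= \epsilon^{1-\proj_1\mathfrak{b}_j}(P^j-P^j_\text{D})(\epsilon^{\mathfrak{b}_1}x^1,\ldots,\epsilon^{\mathfrak{b}_n}x^n)$; writing $P^j-P^j_\text{D}$ in standard form and applying Lemma~\ref{lem:hom} monomial-by-monomial, each monomial picks up a power $\epsilon^{1-\proj_1\mathfrak{b}_j + \proj_1(\text{monomial scaling})}$, and Lemma~\ref{lem:compare2} — which says $P^j_\text{D}(\mathfrak{b}_1,\ldots,\mathfrak{b}_n)\prec(P^j-P^j_\text{D})(\mathfrak{b}_1,\ldots,\mathfrak{b}_n)$, i.e. every such monomial scaling strictly exceeds $(m_{\ell+1},0)=\mathfrak{b}_j-(1,0)$ in the $\preceq$ order, hence has strictly larger first coordinate or (if equal first coordinate) this cannot happen since everything is a power scaling — forces the exponent $1-\proj_1\mathfrak{b}_j+\proj_1(\text{mon}) > 0$. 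Thus each monomial of $R^j_{\epsilon,\text{D}}$ carries a strictly positive power of $\epsilon$, giving $\sup_{|x|\le C}|R^j_{\epsilon,\text{D}}(x)|\to 0$.

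The only subtlety to be careful about, and the step I expect to require the most attention, is the bookkeeping of scalings as elements of $\cals$ rather than integers: one must confirm that for $j\notin I_0$ the scaling $\mathfrak{b}_j$ is a power scaling (true by induction, since $(m_{\ell+1},0)+(1,0)$ has zero second component whenever $m_{\ell+1}<\infty$, and noise propagation guarantees finiteness), that $P^j_\text{D}$ is genuinely homogeneous under $(\mathfrak{b}_1,\ldots,\mathfrak{b}_n)$ of degree $(m_{\ell+1},0)$ — which requires the observation, as in the proof of Lemma~\ref{lem:auxin}, that variables with index outside $\bigcup_{k\le\ell}I_k$ have scaling $\infty$ and so drop out of $P^j$'s relevant terms, and that $P^j_\text{D}$ depends only on already-scaled coordinates — and that consequently the exponent identity $\frac{\epsilon}{\epsilon^{\mathfrak{b}_j}}\epsilon^{(m_{\ell+1},0)} = \epsilon^0 = 1$ holds exactly. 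With Lemma~\ref{lem:hom} and Lemma~\ref{lem:compare2} in hand these are all routine; the proof is then essentially a transcription of the Theorem~\ref{thm:np} argument with $\mathfrak{a}\rightsquigarrow\mathfrak{b}$, $P_\text{L}\rightsquigarrow P_\text{D}$, and the noise prefactor simplified to $1$.
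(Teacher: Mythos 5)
Your proposal is correct and follows essentially the same route as the paper's proof: the same split into $j\in I_0$ and $j\notin I_0$, the same decomposition $P^j = P^j_\text{D} + (P^j - P^j_\text{D})$ with the homogeneity of $P^j_\text{D}$ (via Lemma~\ref{lem:hom}) producing the main drift and Lemma~\ref{lem:compare2} giving the uniform-on-compacts vanishing of $R_{\epsilon,\text{D}}$. The extra bookkeeping you flag (power scalings having zero second component, so $\prec$ forces a strictly larger first coordinate) is correct and simply makes explicit what the paper leaves to "by construction."
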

\begin{proof}
 
Fix $t<\tau_\infty/\epsilon$. As in the proof of Theorem~\ref{thm:np}, we split the proof into two cases.

Case 1:  If $j\in I_0$, then since $\mathfrak{b}_j =(1/2, 0)$, we have 
\begin{align*}
dy_{\epsilon,t}^j &= \epsilon^{1/2} P^j(x_{\epsilon t}) \, dt + \sigma^j \, dB_{\epsilon, t}^j \\
&= \epsilon^{1/2} P^j(\epsilon^{\mathfrak{b}_1} y_{\epsilon,t}^1, \ldots, \epsilon^{\mathfrak{b}_n} y_{\epsilon,t}^n) \, dt + \sigma^j \, dB_{\epsilon, t}^j . 
\end{align*} 
Setting $R_{\epsilon, \text{D}}^j (y):= \epsilon^{1/2} P^j(\epsilon^{\mathfrak{b}_1}y^1, \ldots, \epsilon^{\mathfrak{b}_n} y^n)$ we find that $R_{\epsilon, \text{D}}^j \rightarrow 0$ uniformly on compact subsets of $\RR^n$.  This finishes the proof in Case 1. 

Case 2:  Suppose that $j \notin I_0$ so that $\sigma^j=0$. By construction,
\begin{align*}
dy_{\epsilon,t}^j = \frac{\epsilon}{\epsilon^{\mathfrak{b}_j}} P^j(x_{\epsilon t}) \, dt &=  \frac{\epsilon}{\epsilon^{\mathfrak{b}_j}} P^j_{\text{D}}(x_{\epsilon t}) \, dt +  \frac{\epsilon}{\epsilon^{\mathfrak{b}_j}}(P^j- P^j_{\text{D}})(x_{\epsilon t}) \, dt\\
&= P^j_\text{D} (y_{\epsilon,t})\, dt +  \frac{\epsilon}{\epsilon^{\mathfrak{b}_j}}(P^j- P^j_{\text{D}})(x_{\epsilon t}) \, dt\\
&=:P^j_\text{D} (y_{\epsilon,t})\, dt + R_{\epsilon, \text{D}}^j(y_{\epsilon, t})\, dt \end{align*} 
By construction and Lemma~\ref{lem:compare2}, we find that $R_{\epsilon, \text{D}}^j\rightarrow 0$ as $\epsilon \rightarrow 0^+$ uniformly on compact sets.  This finishes the proof.  

 \end{proof}

 \begin{corollary}
\label{cor:dist}
Suppose that \eqref{eqn:SDEp} is noise propagating.  Then the process $t\mapsto y_{\epsilon,t}\in \E$, $t\in[0,1]$, solving~\eqref{eqn:Deps} converges in law to the solution of the SDE
\begin{align}
\label{eqn:SDEdist}
\begin{cases}
dy_t= P_\text{\emph{D}}(y_t) \, dt + \sigma\, dB_t\\
y_0=0.
\end{cases}
\end{align}  
Furthermore, the solution to~\eqref{eqn:SDEdist} is nonexplosive.

%Furthermore, for each fixed $t>0$, The second assertion follows from the support theorems~\cite{SV_72ii, SV_72i}.
%\begin{align}
%\text{\emph{supp}}( \text{\emph{Law}}(y_t)) = \overline{ \mathcal{A}_{\text{D},t}(0)}
%\end{align}
\end{corollary}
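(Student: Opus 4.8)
\textbf{Proof proposal for Corollary~\ref{cor:dist}.}

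The plan is to deduce the convergence in law from Theorem~\ref{thm:dist} by a standard weak-convergence argument, and then to establish nonexplosion of the limiting SDE~\eqref{eqn:SDEdist} separately using the triangular structure of $P_\text{D}$. For the convergence, I would first observe that by Theorem~\ref{thm:dist} the process $y_{\epsilon,\cdot}$ solves, up to the explosion time $\tau_\infty/\epsilon$, the SDE~\eqref{eqn:Deps} whose coefficients converge uniformly on compacts to those of~\eqref{eqn:SDEdist}, the noise being the fixed additive matrix $\sigma$ driving $B_{\epsilon,\cdot}$, which is itself a standard Brownian motion. One then wants to invoke a continuity/stability result for SDEs: if the drift coefficients $P_\text{D}+R_{\epsilon,\text{D}}$ converge to $P_\text{D}$ locally uniformly and the diffusion coefficient is fixed and additive, then the laws of the (possibly explosive) solutions converge on the Skorokhod-type space $\E$ of explosive trajectories. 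Concretely, I would work on the one-point compactification $\RR^n\cup\{\Delta\}$, show tightness of the family $\{y_{\epsilon,\cdot}\}$ in $\E$ using the uniform-on-compacts control of the drift together with standard moment estimates for the Brownian integral up to the first exit from a large ball, and then identify every subsequential limit as a solution of the martingale problem for~\eqref{eqn:SDEdist} by passing to the limit in the associated martingales; uniqueness in law for~\eqref{eqn:SDEdist} (it has locally Lipschitz coefficients, being polynomial, hence pathwise uniqueness up to explosion) then pins down the limit.

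The nonexplosion of~\eqref{eqn:SDEdist} I would prove by induction on the propagation layers $I_0, I_1, \ldots, I_{\dim}$, exploiting the fact that $P_\text{D}^j$ depends only on the variables with indices in $\bigcup_{k=0}^{\ell-1} I_k$ when $j\in I_\ell$ — exactly as recorded for $P_\text{L}$ in~\eqref{eqn:iterate1cont}, and noting the Remark that $P_\text{D}^j$ has the same such dependence structure. For $j\in I_0$ the equation is $dy^j_t = \sigma^j\,dB^j_t$, which is globally defined. Assuming inductively that $(y^k_t)_{k\in\bigcup_{s\le\ell-1}I_s}$ is defined and finite for all $t\in[0,1]$ (indeed continuous, hence bounded on $[0,1]$ pathwise), for $j\in I_\ell$ we have
\begin{align*}
y^j_t = \int_0^t P_\text{D}^j\big(\pi_{\cup_{s\le \ell-1}I_s}(y_u)\big)\,du,
\end{align*}
whose integrand is a polynomial evaluated along an a.s.\ bounded (on $[0,1]$) path, hence the integral is finite for all $t\in[0,1]$; in particular $y^j_\cdot$ never reaches $\Delta$ before time $1$. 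Since $\bigcup_{k=0}^{\dim}I_k=I$, this gives $\tau(y_\cdot)=\infty$ a.s., i.e.\ nonexplosion. The same observation, applied to the SDE~\eqref{eqn:Deps} (whose vector field $P_\text{D}+R_{\epsilon,\text{D}}$ retains the triangular dependence structure, since $R_{\epsilon,\text{D}}^j$ involves the same or higher-order terms in the same variables), shows that $y_{\epsilon,\cdot}$ is in fact defined on all of $[0,1]$ for $\epsilon$ small, which is convenient for the tightness argument above and lets one work in $\C$ rather than needing the full machinery of $\E$.

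The main obstacle I anticipate is the passage to the limit in the drift integral $\int_0^t R_{\epsilon,\text{D}}(y_{\epsilon,u})\,du$ and in $\int_0^t P_\text{D}(y_{\epsilon,u})\,du$: although $R_{\epsilon,\text{D}}\to 0$ uniformly on compacts and $P_\text{D}$ is continuous, one must rule out escape to $\Delta$ (explosion) in the limit and control the paths uniformly on compact time sets with high probability. This is precisely where the triangular/iterative structure does the real work — it provides, layer by layer, deterministic-in-drift bounds driven only by the Brownian layers $I_0$, yielding the a priori moment bounds needed for tightness and for the uniform integrability that justifies exchanging limit and integral. Once these estimates are in hand, identifying the limit via the martingale problem and concluding by uniqueness in law is routine. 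Alternatively, one may cite a general SDE-stability theorem (e.g.\ convergence of solutions under locally uniform convergence of coefficients with fixed additive noise) to shortcut the martingale-problem identification, with the triangular nonexplosion argument above supplying the missing non-degeneracy-free a priori control.
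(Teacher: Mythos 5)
Your nonexplosion argument for \eqref{eqn:SDEdist} is exactly the paper's: induction along the layers $I_0,\ldots,I_{\dim}$ using that $P_\text{D}^j$, $j\in I_\ell$, depends only on lower-layer variables, so the system is explicitly integrable. The convergence half, however, contains a genuine gap. You claim that $R_{\epsilon,\text{D}}$ ``retains the triangular dependence structure'' and conclude that $y_{\epsilon,\cdot}$ is defined on all of $[0,1]$ for small $\epsilon$; this is false in general. The remainder $R^j_{\epsilon,\text{D}}(y)=\epsilon^{(1,0)-\mathfrak{b}_j}(P^j-P^j_\text{D})(\epsilon^{\mathfrak{b}_1}y^1,\ldots,\epsilon^{\mathfrak{b}_n}y^n)$ inherits \emph{all} monomials of $P^j$ not selected into $P^j_\text{D}$, and these may involve same- or higher-layer variables (in Lorenz '96, $P^3-P^3_\text{D}=x^4x^2-x^3$ involves the layer-2 variable $x^4$). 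Moreover $y_{\epsilon,\cdot}$ is just a space--time rescaling of $x$, which can explode before time $\epsilon$ with positive probability, so for each fixed $\epsilon$ the process genuinely lives in $\E$, not $\C$. Since your tightness and uniform-integrability estimates are explicitly derived from this triangular claim (``deterministic-in-drift bounds driven only by the Brownian layers''), the a priori control that your martingale-problem scheme hinges on is not actually established; patching it would require a localized comparison with a nonexplosive solution up to exit times of large balls, i.e.\ essentially the estimate you are trying to avoid.

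The paper's proof sidesteps all of this with a coupling: since $B_{\epsilon,\cdot}$ and $B_\cdot$ are equal in law, it introduces $z_{\epsilon,\cdot}$ solving \eqref{eqn:zprocess} driven by the \emph{same} Brownian motion as the limit $y_\cdot$, so $z_{\epsilon,\cdot}\stackrel{d}{=}y_{\epsilon,\cdot}$, and then a pathwise Gr\"onwall bound up to the exit times $T_{\epsilon,2k}\wedge T_k\wedge 1$ gives $\sup_{s\le t}|z_{\epsilon,s}-y_s|\le te^{C_kt}\sup_{|z|\le 2k}|R_{\epsilon,\text{D}}(z)|\to 0$; nonexplosion of $y$ lets one choose $k=k(\omega)$ with $T_k>1$, yielding almost sure uniform convergence of the copy and hence convergence in law, with no tightness, martingale-problem identification, or uniqueness-in-law step needed. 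If you want to keep your weak-convergence route, you must replace the triangular claim by a localization argument of this type (controlling $\PP\{\tau_\infty\le\epsilon\}$ and exit probabilities uniformly in $\epsilon$); as written, the key step is unsupported.
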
  
\begin{proof}
The second conclusion follows from the structure of the vector field $P_\text{D}$ following a similar reasoning used in \eqref{eqn:controlLIL}-\eqref{eqn:iterate3cont}.  In particular, the system~\eqref{eqn:SDEdist} is explicitly integrable.  

For the first conclusion, consider the following system
\begin{align}
\label{eqn:zprocess}
\begin{cases}
dz_{\epsilon,t} = P_\text{D}( z_{\epsilon,t}) + R_{\epsilon, \text{D}} (z_{\epsilon,t}) + \sigma \, dB_t \\
z_{\epsilon,0}= 0.
\end{cases}
\end{align}
Note that since $B_t$ and $B_{\epsilon,t}$ have the same distribution, $z_{\epsilon,t}$ has the same distribution in the space $\E$ as $ y_{\epsilon,t}$.  However, $ z_{\epsilon, t}$ is driven by the same Brownian motion as $y_t$.   
Let $T_{\epsilon,k}= \inf\{ t \geq 0 \, : \, |z_{\epsilon,t}|\geq k \}$ and $T_k= \inf\{ t\geq 0 \, : \, |y_t| \geq k \}$.  Then for any $t\leq T_{\epsilon,2k} \wedge T_k \wedge 1$ we have that
\begin{align*}
\sup_{s\in [0,t]} | z_{\epsilon,s}- y_s| &\leq  t \sup_{|z| \leq 2k} |R_{\epsilon, \text{D}}(z)| +\int_0^t \sup_{v\in [0,s]} |P_\text{D}(z_{\epsilon,v}) - P_\text{D}(y_v)| \, ds \\
&\leq t \sup_{|z| \leq 2k} |R_{\epsilon, \text{D}}(z)|  + C_k  \int_0^t \sup_{v\in [0,s]} |z_{\epsilon,v}- y_v| \, ds\end{align*}     
for some constant $C_k >0$.  Gr\"{o}nwall's inequality then implies that for any $t\leq T_{\epsilon,2k} \wedge T_k \wedge 1$
\begin{align}
\label{eqn:inequalityzprocess}
\sup_{s\in [0,t]} |z_{\epsilon, s}- y_s| \leq  t e^{C_k t}  \sup_{|z| \leq 2k} |R_{\epsilon, \text{D}}(z)| 
\end{align} 
Since by Theorem~\ref{thm:dist} we have $R_{\epsilon, \text{D}}\rightarrow 0$ as $\epsilon\rightarrow 0^+$ uniformly on compact subsets of $\RR^n$, taking $\epsilon \rightarrow 0^+$ we find that $\liminf_{\epsilon\rightarrow 0^+} T_{\epsilon,2k}\geq T_k$.  Consequently, $d_t(z_{\epsilon, \cdot}, y_{\cdot})\rightarrow 0$ as $\epsilon \rightarrow 0^+$ provided $t \leq T_k \wedge 1$.  Since $y_t$ is nonexplosive, for almost every $\omega \in \Omega$ choose $k=k(\omega)$ such that $T_k(\omega)>1$, and the result is proven.   

\end{proof}

Analogous to the LIL scaling and motivated by Theorem~\ref{thm:LIL}, we also consider a control problem related to the distributional rescaling~\eqref{eqn:Deps}
\begin{align}
\label{eqn:controllaw}
\begin{cases}\dot{x}= P_\text{D}(x) + \sigma \dot{f}\\
x_0=x
\end{cases}
\end{align} 
where the control $f\in \HH$.  However, this time, the control problem is not restricted to controls with a bounded norm in $\HH$ (see relation~\eqref{eqn:accessbounded}).  This makes it noticeably easier to solve the control problem.  

By the same reasoning used above in~\eqref{eqn:iterate1cont}-\eqref{eqn:iterate2cont}, the structure of the vector field $P_\text{D}$ ensures that for all initial conditions $x\in \RR^n$,  the solution $\varphi_t(P_\text{D}, f) x$ of equation~\eqref{eqn:controllaw} is well-defined on $[0,1]$.  For $t>0$, we define the set of accessible points by 
\begin{align}
\label{def:AD}
 \A_{\text{D},t}(x)&= \{ y \in \RR^n \, : \, \varphi_{t}(P_\text{D}, f)x=y \text{ for some } f\in \HH\}. 
\end{align}
The relationship between~\eqref{eqn:Deps} and~\eqref{eqn:controllaw}  is described in the next result (see also~\cite{SV_72ii, SV_72i}).

\begin{lemma}
\label{lem:support}
Suppose that the system~\eqref{eqn:SDEp} is noise propagating.  Then the support of the process $t\mapsto y_t$ in $\C$ coincides with 
\begin{align}
\text{\emph{cl}}(\{t\mapsto \varphi_{t}(P_{\text{\emph{D}}}, f)0\, : \, f\in \HH \})
\end{align}
where the closure is taken in the space $\C$.      
\end{lemma}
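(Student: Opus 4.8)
The plan is to reduce the statement to the classical Stroock–Varadhan support theorem for diffusions with smooth coefficients. The obstruction to applying that theorem directly is that $P_{\text{D}}$ is a polynomial vector field, hence has unbounded derivatives, so global Lipschitz bounds fail and explosion must a priori be ruled out; but Corollary~\ref{cor:dist} already tells us that the solution $t\mapsto y_t$ of~\eqref{eqn:SDEdist} is nonexplosive, and the same triangular, explicitly integrable structure of $P_{\text{D}}$ used there will control the controlled trajectories $\varphi_t(P_{\text{D}},f)0$ uniformly over bounded control energy.

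First I would localize. For each $k\in\N$ pick a smooth, compactly supported modification $P_{\text{D}}^{(k)}\in C_c^\infty(\RR^n;\RR^n)$ with $P_{\text{D}}^{(k)}=P_{\text{D}}$ on the ball $B_k(0)$. The SDE $dy^{(k)}_t = P_{\text{D}}^{(k)}(y^{(k)}_t)\,dt + \sigma\,dB_t$, $y^{(k)}_0=0$, has bounded smooth coefficients, so the Stroock–Varadhan support theorem applies: the topological support in $\C$ of the law of $t\mapsto y^{(k)}_t$ equals $\text{cl}(\{t\mapsto \varphi_t(P_{\text{D}}^{(k)},f)0 : f\in\HH\})$, closure in $\C$. (Since $\sigma$ is constant and diagonal, the Stratonovich correction vanishes and the control ODE is exactly~\eqref{eqn:controllaw} with $P_{\text{D}}^{(k)}$ in place of $P_{\text{D}}$; this is the additive-noise simplification already exploited in the proof of Theorem~\ref{thm:LIL}.) I would then let $k\to\infty$ and argue that both sides stabilize. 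On the probabilistic side, $y^{(k)}$ and $y$ agree up to the exit time of $B_k(0)$, which tends to $\infty$ a.s. by the nonexplosion in Corollary~\ref{cor:dist}, so the laws of $y^{(k)}$ converge weakly to the law of $y$ in $\C$, and one checks that the supports converge (monotone stabilization on each fixed precompact set of paths). On the control side, for fixed $f\in\HH$ the trajectory $\varphi_t(P_{\text{D}},f)0$ is bounded on $[0,1]$ — indeed bounded by a constant depending only on $\|\dot f\|_{L^2}$, by the inductive argument in~\eqref{eqn:iterate1cont}--\eqref{eqn:iterate2cont} — so for $k$ large enough $\varphi_\cdot(P_{\text{D}}^{(k)},f)0=\varphi_\cdot(P_{\text{D}},f)0$; hence the control sets increase to $\{t\mapsto \varphi_t(P_{\text{D}},f)0:f\in\HH\}$.

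The one quantitative point that needs care — and the main obstacle — is the uniformity: to match supports I need that on any fixed precompact subset of path space only finitely many $k$ matter, which requires a uniform-in-$k$ (equivalently, uniform over bounded control energy) a priori bound on $\sup_{t\in[0,1]}|\varphi_t(P_{\text{D}},f)0|$. This follows from the layered structure: on $I_0$ the component is $\sigma^j f^j_t$, bounded by $\|\sigma\|\,\|\dot f\|_{L^2}$; inductively, on $I_{\ell+1}$ the drift $P^j_{\text{D}}$ depends only on coordinates in $\cup_{k\le\ell}I_k$, already bounded by the inductive hypothesis, so $|\varphi^j_t|\le |x^j| + \sup|P^j_{\text{D}}|$ over the relevant compact set — giving a bound that is a fixed (energy-dependent) polynomial expression and in particular uniform over $f\in\HH_\alpha$ for each $\alpha$. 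With this in hand, the inclusion of the control closure into the support follows by approximating any limiting path by $\varphi_\cdot(P_{\text{D}}^{(k)},f_n)0$ with controls of uniformly bounded energy so that the polynomial drift is not seen, and the reverse inclusion follows from weak convergence $y^{(k)}\Rightarrow y$ together with the classical support characterization at each level $k$. Finally I would remark that one may equivalently cite~\cite{SV_72ii, SV_72i} directly in the form for locally Lipschitz coefficients with nonexplosion, but the localization argument above makes the reduction self-contained.
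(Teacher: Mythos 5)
Your proposal is correct in outline, but it takes a genuinely different route from the paper. The paper never invokes the Stroock--Varadhan support theorem: since the noise is additive and $P_{\text{D}}$ has the layered, explicitly integrable structure, the solution map $f\mapsto \varphi_{\cdot}(P_{\text{D}},f)0$ extends from $\HH$ to all of $\C$ as a pathwise-defined map (solve $\varphi_t=\int_0^t P_{\text{D}}(\varphi_s)\,ds+\sigma f_t$ layer by layer), and the inductive argument of \eqref{eqn:iterate1cont}--\eqref{eqn:iterate2cont} shows this map is continuous from $\C$ to $\C$. Because $y_{\cdot}=\varphi_{\cdot}(P_{\text{D}},B)0$ pathwise and the support of Wiener measure in $\C$ is the closure of $\HH$, the lemma follows from the elementary fact that the support of a pushforward under a continuous map is the closure of the image of the support; the citation of Stroock--Varadhan in the text is only a pointer. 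Your localization argument buys generality (it would survive multiplicative or non-triangular coefficients, where the It\^o map is not continuous), at the cost of heavier machinery and a $k\to\infty$ limit that needs more care than you give it: the parenthetical claim that weak convergence of the laws of $y^{(k)}$ forces convergence of their supports is false in general, and the uniform-over-$\HH_\alpha$ bound you stress is not what is needed. The clean way to finish your argument is to observe that any fixed target path $g\in\C$ (a support point of $y$ or a controlled trajectory) is bounded, so for $k>\sup_{[0,1]}|g|+\epsilon$ every path in the $\epsilon$-ball around $g$ stays in $B_k(0)$, where $P_{\text{D}}^{(k)}=P_{\text{D}}$ and where $y$ and $y^{(k)}$ coincide up to the (then irrelevant) exit time; both inclusions then follow at that single $k$ from the classical support theorem and local uniqueness, with no limit of supports taken at all. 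With that repair your proof is complete, but the paper's continuity argument is both shorter and more elementary.
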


\begin{proof}
Note that if $f\in \C$, then we can also define $\varphi_{t}(P_\text{D},f)0$ as the solution of the integral equation
\begin{align}
\varphi_{t}(P_\text{D},f)0= \int_0^t P_\text{D}(\varphi_{s}(P_\text{D},f)0) \, ds + \sigma f_t .
\end{align}
An inductive argument similar to the one given in \eqref{eqn:iterate1cont}-\eqref{eqn:iterate2cont} implies that the mapping $f\mapsto \varphi_{\cdot}(P_\text{D},f)0:\C\rightarrow \C$ is continuous.  Since the support of $t\mapsto B_t$ in $\C$ coincides with the closure of $\HH$ in the topology $\C$, the result follows by continuity.    
\end{proof}

For any $\epsilon >0$, let $S_{\epsilon, \text{D}}:\RR^n \rightarrow \RR^n$ be given by
\begin{align}
S_{\epsilon, \text{D}} (x) = \bigg( \frac{x^1}{\epsilon^{\mathfrak{b}_1}},  \frac{x^2}{\epsilon^{\mathfrak{b}_2}}, \ldots,  \frac{x^n}{\epsilon^{\mathfrak{b}_n}}\bigg).
\end{align}
We also have the following implication for the densities provided they exist and have the required regularity.    
\begin{corollary}
Suppose that equation~\eqref{eqn:SDEp} is noise propagating and $x_t$ is nonexplosive with probability density function $(t,\eta)\mapsto q_t(\eta)\in C((0, \infty) \times \RR^n ; [0, \infty))$.  If the solution $y_t$ of~\eqref{eqn:SDEdist} has probability density function $(t,\eta) \mapsto q_{t, \text{\emph{D}}}(\eta): (0 ,\infty) \times \RR^n \rightarrow [0, \infty)$, then for any $t>0$ and $y\in \RR^n$
\begin{align*}
\epsilon^{\mathfrak{b}_1+\cdots + \mathfrak{b}_n} q_{\epsilon t}( S_{\epsilon, \text{\emph{D}}}^{-1}(y)) \rightarrow q_{t, \text{\emph{D}}}(y) \qquad \text{ as } \qquad \epsilon \rightarrow 0^+.
\end{align*}   
\end{corollary}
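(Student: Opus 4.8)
The plan is to reduce the statement to a change of variables together with the distributional convergence from Corollary~\ref{cor:dist}, and then to upgrade weak convergence of laws to pointwise convergence of densities using the postulated regularity.

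First I would record the change of variables. By construction $y_{\epsilon, t} = S_{\epsilon, \text{D}}(x_{\epsilon t})$, where $S_{\epsilon, \text{D}}$ is the linear diagonal map with $j$th diagonal entry $\epsilon^{-\mathfrak{b}_j}$. Since each $\mathfrak{b}_j \in \cals$ is a power scaling, i.e. has vanishing second coordinate, one has $\epsilon^{\mathfrak{b}_j} = \epsilon^{\mathfrak{b}_j^1}$, and hence $|\det S_{\epsilon, \text{D}}| = \epsilon^{-(\mathfrak{b}_1 + \cdots + \mathfrak{b}_n)}$. Under the hypothesis that $x_t$ is nonexplosive with density $q_t$, the random variable $x_{\epsilon t}$ has density $\eta \mapsto q_{\epsilon t}(\eta)$, so the standard pushforward formula gives that $y_{\epsilon, t}$ has density
\begin{align}
\label{eqn:peps}
p_{\epsilon, t}(y) := \epsilon^{\mathfrak{b}_1 + \cdots + \mathfrak{b}_n}\, q_{\epsilon t}\big(S_{\epsilon, \text{D}}^{-1}(y)\big), \qquad y \in \RR^n,
\end{align}
which is precisely the left-hand side of the claimed limit, and which is continuous on $\RR^n$ by joint continuity of $q$.

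Next, by Corollary~\ref{cor:dist} the process $t \mapsto y_{\epsilon, t}$ converges in law on $\C$ to the nonexplosive solution $t \mapsto y_t$ of~\eqref{eqn:SDEdist}; evaluating at the fixed time $t$ shows that the law of $y_{\epsilon, t}$ converges weakly to that of $y_t$, i.e. the probability measures $p_{\epsilon, t}(y)\, dy$ converge weakly to $q_{t, \text{D}}(y)\, dy$. It remains to promote this to the pointwise statement $p_{\epsilon, t}(y) \to q_{t, \text{D}}(y)$, and this is exactly where the ``required regularity'' is used. Assuming the family $\{ p_{\epsilon, t} \}_{\epsilon \in (0, e^{-1})}$ is locally uniformly bounded and locally equicontinuous, the Arzelà--Ascoli theorem shows that from any sequence $\epsilon_k \downarrow 0$ one can extract a subsequence along which $p_{\epsilon_k, t}$ converges in $C_{\mathrm{loc}}(\RR^n)$ to a continuous limit $g$; weak convergence forces $g(y)\, dy = q_{t, \text{D}}(y)\, dy$, and since both sides are continuous, $g = q_{t, \text{D}}$. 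As the limit does not depend on the subsequence, $p_{\epsilon, t} \to q_{t, \text{D}}$ pointwise (indeed locally uniformly), which is the assertion.

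The main obstacle is precisely the last step: weak convergence of measures never by itself yields pointwise convergence of densities, so one genuinely needs uniform-in-$\epsilon$ regularity of the $p_{\epsilon, t}$. A natural way to secure it, when the limiting and perturbed systems are uniformly hypoelliptic, is to note that $p_{\epsilon, t}$ solves the Kolmogorov forward equation for the drift $P_\text{D} + R_{\epsilon, \text{D}}$ and diffusion matrix $\sigma\sigma^{T}$, whose coefficients converge to those of~\eqref{eqn:SDEdist} together with their derivatives uniformly on compact sets by Theorem~\ref{thm:dist}; interior hypoelliptic (Schauder-type) estimates then furnish local $C^k$ bounds uniform in $\epsilon$, giving the needed compactness. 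However, Example~\ref{ex:NPNH} shows that noise propagation does not imply hypoellipticity, so in general the existence and regularity of the density cannot be extracted from the hypotheses of the present paper and must be imposed, exactly as in the statement.
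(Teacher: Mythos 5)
Your reduction is the same as the paper's: you identify the law of $y_{\epsilon,t}=S_{\epsilon,\text{D}}(x_{\epsilon t})$, note $|\det S_{\epsilon,\text{D}}^{-1}|=\epsilon^{\mathfrak{b}_1+\cdots+\mathfrak{b}_n}$ so that the left-hand side of the claimed limit is exactly the density of $y_{\epsilon,t}$, and invoke Corollary~\ref{cor:dist} for convergence in law at the fixed time $t$. The paper packages this slightly differently --- it computes $\PP\{x_{\epsilon t}\in A_\epsilon\}$ over the rescaled rectangles $A_\epsilon=[\alpha^1\epsilon^{\mathfrak{b}_1},\beta^1\epsilon^{\mathfrak{b}_1}]\times\cdots\times[\alpha^n\epsilon^{\mathfrak{b}_n},\beta^n\epsilon^{\mathfrak{b}_n}]$ in two ways (via the coupling $z_{\epsilon,t}$ from the proof of Corollary~\ref{cor:dist}, and via the change of variables in the integral) --- but that is the same content as your pushforward formula plus weak convergence of the time-$t$ marginals.

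The divergence is in the last step. The paper concludes from the convergence of $\int_{A_1}\epsilon^{\mathfrak{b}_1+\cdots+\mathfrak{b}_n}q_{\epsilon t}(S_{\epsilon,\text{D}}^{-1}(\eta))\,d\eta$ for arbitrary rectangles $A_1$ that ``continuity of the densities implies the claimed pointwise convergence,'' i.e. it treats the passage from averages over small rectangles to the value at a point as immediate; this is where the vague clause ``provided they exist and have the required regularity'' preceding the corollary is doing work. You instead make the needed input explicit: local uniform boundedness and equicontinuity in $\epsilon$ of the rescaled densities, followed by Arzel\`a--Ascoli and identification of subsequential limits through the weak convergence. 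Your observation that weak convergence plus continuity of each individual density does not by itself give pointwise convergence is correct, and it pinpoints exactly the step the paper leaves informal. The one inaccuracy is your closing claim that this uniform regularity is imposed ``exactly as in the statement'': as written, the corollary only assumes joint continuity of $(t,\eta)\mapsto q_t(\eta)$ and existence of $q_{t,\text{D}}$, and joint continuity on $(0,\infty)\times\RR^n$ gives no control of $q_{\epsilon t}$ under the rescaling as $\epsilon t\to 0$, so the equicontinuity you use is a genuinely stronger hypothesis than what is stated (and your hypoellipticity caveat, via Example~\ref{ex:NPNH}, correctly explains why it cannot be derived from noise propagation alone). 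So: same skeleton as the paper, an honest and more careful treatment of the final limit, at the cost of an added hypothesis that the paper's statement does not literally contain but its own proof implicitly needs.
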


\begin{proof}
Fix $\alpha^j< \beta^j$, $j=1,2,\ldots n$, and consider the family of rectangles
\begin{align*}
A_\epsilon = [\alpha^1 \epsilon^{\mathfrak{b}_1}, \beta^1 \epsilon^{\mathfrak{b}_1}]\times \cdots \times [\alpha^n \epsilon^{\mathfrak{b}_n}, \beta^n \epsilon^{\mathfrak{b}_n}], \qquad \epsilon >0.\end{align*}
Note that for any $t>0$ we have by Corollary~\ref{cor:dist}
\begin{align*}
\PP \{ x_{\epsilon t} \in A_\epsilon\}= \PP \{ S_{\epsilon, \text{D}}(x_{\epsilon t}) \in A_1\} = \PP \{ z_{\epsilon,t} \in A_1\} \rightarrow \PP \{ y_t \in A_1\}= \int_{A_1} q_{t, \text{D}}(\eta) \, d\eta 
\end{align*}
as $\epsilon \rightarrow 0^+$. 
On the other hand, 
\begin{align*}
\PP \{ x_{\epsilon t} \in A_\epsilon\} = \int_{A_\epsilon} q_{\epsilon t}(\eta) \, d\eta = \int_{A_1} \epsilon^{\mathfrak{b}_1+ \cdots + \mathfrak{b}_n} q_{\epsilon t} ( S_{\epsilon, \text{D}}^{-1}(\eta) ) \, d\eta
\end{align*}
Hence as $\epsilon \rightarrow 0^+$
\begin{align*}
\int_{A_1} \epsilon^{\mathfrak{b}_1+ \cdots + \mathfrak{b}_n} q_{\epsilon t} ( S_{\epsilon, \text{D}}^{-1}(\eta) ) \, d\eta \rightarrow  \int_{A_1} q_{t, \text{D}}(y) \, dy
\end{align*}
Since the choice of $\alpha^j< \beta^j$, $j=1,2,\ldots n$, in the definition of $A_\epsilon$ was arbitrary, continuity of the densities implies the claimed pointwise convergence.  \end{proof}

% Motivated by the previous result, we consider the following SDE
% \begin{align}
% \begin{cases}
% dy_{\text{D},t}= r_\text{D}(y_{\text{D},t}) \, dt + \sigma dW_t\\
% y_{\text{D},0}=0.
% \end{cases}
% \end{align} 
% 
% 
% \begin{corollary}
% Let $t>0$ and suppose that equation~\eqref{eqn:SDEp} is noise propagating.  Then $y_{\epsilon,t}$ converges in distribution to $y_{\text{D},t}$ as $\epsilon \rightarrow 0^+$. 
% \end{corollary}

\section{H\"{o}rmander's condition, the Gramian and relationships between the control problems}
\label{sec:control}
Throughout this section, we suppose that the system~\eqref{eqn:SDEp} is noise propagating.  Our goal is to explore the  control problems~\eqref{eqn:controlLIL} and~\eqref{eqn:controllaw} associated to each scaling limit.  Specifically, we will determine when the control sets $A_{\text{L}, t}(x)$ and $A_{\text{D},t}(x)$ contain an open set and relate the two, when possible.  Methods that can be used to find points in these sets are deferred to the following section.

To unify the discussion below between the different control sets and different vector fields, fix $\Q\in\{ P_\text{L}, P_\text{D}\}$ and consider the control problem
\begin{align}
\label{eqn:controlgeneral}
\begin{cases}
\dot{x}=\Q (x)+\sigma \dot{f}\\
x_0=x\in \RR^n
 \end{cases}
\end{align}
where $x\in \RR^n$ and $f\in \HH$ with $\HH$ defined in~\eqref{eqn:Hdef}.  For $0<t\leq 1$, we let 
\begin{align}
\label{eqn:smallcont}
\A^1_t(x)&= \{ y\in \RR^n \, : \, \varphi_t(\Q,f)x=y \text{ for some } f\in \HH_{1}  \},\\
\label{eqn:largecont}
\A^2_t(x)&= \{ y\in \RR^n \, : \, \varphi_t(\Q,f)x=y \text{ for some } f\in \HH\}.
\end{align}
Clearly, $\A^1_t(x) \subset \A^2_t(x)$ for any $0<t\leq 1$ and $x\in \RR^n$.

\subsection{H\"{o}rmander's Condition and the Gramian}   We first determine when $\A_t^1(x)$ contains a non-empty open set in $\RR^n$.  To this end, we use the \emph{Gramian} matrix associated to the control problem~\eqref{eqn:controlgeneral}, as follows.  

Fix $x\in \RR^n $, $0<t\leq 1$ and $f, f_1,\ldots,  f_n \in \HH$.  Define a matrix $F:=\begin{bmatrix}  f_1& \cdots & f_n\end{bmatrix}$ where $f_i$, $i=1,2,\ldots, n$, is viewed as a column vector.  Define the mapping $\alpha\mapsto \tilde{g}_{t,F}(\alpha,f, x):\RR^n\rightarrow \RR^n$ by 
\begin{align*}
\tilde{g}_{t, F}(\alpha,f,  x) = \varphi_t(Q, f + F \alpha)x
\end{align*}
where $F\alpha$ is the usual product of the matrix $F$ and the column vector $\alpha\in \RR^n$. 
Letting $\tilde{g}_{t,F}'(f,x):= D_\alpha \tilde{g}_{t,F}(\alpha,f,x) |_{\alpha =0}$, we observe that $\tilde{g}'_{t,F}(f,x)$ satisfies the $n\times n$ matrix-valued ODE
\begin{align}
\label{eqn:megpf}
\tilde{g}'_{t,F} (f,x)= \int_0^t D Q(\varphi_s(\Q, f) x) \tilde{g}'_{s,F}(f,x) \, ds + \sigma F_t .
\end{align}
In order to obtain a representation for the solution of~\eqref{eqn:megpf} and isolate its dependence on $F$, consider the $n\times n$ matrix-valued solutions $J_t(f,x)$ and $K_t(f,x)$ of the linear equations
\begin{align}
J_t (f,x)&= I + \int_0^t DQ(\varphi_s(\Q,f)x) J_s(f,x) \, ds,\\
K_t(f,x)&= I - \int_0^t K_s(f,x) DQ(\varphi_s(\Q,f)x) \, ds.
\end{align}
One can check that 
\begin{align*}
\tfrac{d}{dt}(K_t (f,x) J_t(f,x)) =0\qquad \text{ and } \qquad K_0(f,x) J_0(f,x)=I 
\end{align*}
so that both $K_t(f,x)$, $J_t(f,x)$ are invertible for all $t\in [0,1]$ and $K_t(f,x)=J_t(f,x)^{-1}$.  Next, by uniqueness of solutions of ODEs, the matrix $\tilde{g}'_{t,F}(f,x)$ can be expressed in terms of the matrices $K_t(f,x)$ and $J_t(f,x)$ as \begin{align*}
\tilde{g}'_{t,F}(f,x) = J_t(f,x) \int_0^t K_s(f,x) \sigma \dot{F}_s \, ds 
\end{align*}
Note that, up to this point, $f_1, f_2, \ldots, f_n\in \HH$ were arbitrary.  We now fix them so that our matrix $F=F^\mu(f)$ is given by  
\begin{align}
\label{eqn:Fchoice}
F_t^\mu(f)= \frac{1}{\mu}\int_0^t \sigma^* K_s(f,x)^*  \, ds  
\end{align} 
where $*$ denotes the Hermitian transpose and $\mu>0$ is determined below.  Since $t\mapsto K_t(f,x)$ solves a linear equation with bounded coefficients, it is also bounded, and therefore the choice~\eqref{eqn:Fchoice} ensures that $f_1, \ldots, f_n \in \HH$.    
Define
\begin{align*}
g'_{\mu,t} (f,x) = \tilde{g}'_{t, F^\mu(f)}(f,x). 
\end{align*}
and note that 
\begin{align*}
g'_{\mu,t}(f,x) = \frac{1}{\mu} J_t(f,x)\int_0^t K_s(f,x) \sigma \sigma^* K_s(f,x)^* \, ds. 
\end{align*}
We are interested in invertibility of $g'_{\mu, t}(f,x)$, which is equivalent to invertibility of 
\begin{align*}
G_{\mu,t}(f,x) :=  \frac{1}{\mu}\int_0^t K_s(f,x) \sigma \sigma^* K_s^*(f,x) \, ds
\end{align*}
for any $\mu >0$. If $\mu=1$, we call $G_t(f,x):= G_{1,t}(f,x)$ the \emph{Gramian} associated to the control problem~\eqref{eqn:controlgeneral} with control $f\in \HH$.  Observe that $G_t(f,x)$ is $n\times n$ symmetric, nonnegative-definite matrix. 
\begin{theorem}
\label{thm:gramm}
Suppose that there exists $0<\lambda <1$, $s>0$, and $f\in \HH_{\lambda}$ such that the matrix $G_s(f,x)$ is invertible.  Then $\A_t^1(x)$ contains a nonempty, open set in $\RR^n$ for all $t\geq s$. 
\end{theorem}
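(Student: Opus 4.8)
The plan is to use the implicit/inverse function theorem on the map $\alpha \mapsto \tilde g_{t,F^\mu(f)}(\alpha, f, x)$ for a suitable fixed control $f$ and a suitable matrix $F^\mu(f)$, exploiting the fact that the derivative at $\alpha = 0$ is precisely $g'_{\mu,t}(f,x) = \tfrac{1}{\mu} J_t(f,x) G_{\mu,t}(f,x)$, which is invertible as soon as the Gramian $G_{s}(f,x)$ is invertible. The key point is that invertibility of $G_s(f,x)$ forces invertibility of $G_t(f,x)$ for all $t \geq s$, since $t \mapsto K_s(f,x)\sigma\sigma^* K_s(f,x)^*$ is nonnegative-definite, so the integral is monotone nondecreasing in $t$ in the sense of quadratic forms; hence $G_t(f,x) \succeq$ a positive multiple of $G_s(f,x)$ once $t \geq s$, and $J_t(f,x)$ is always invertible. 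Thus $g'_{\mu,t}(f,x)$ is invertible for every $t \geq s$ and every $\mu > 0$.

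Next I would fix $t \geq s$ and argue that, for $\mu$ large enough, the perturbed controls $f + F^\mu(f)\alpha$ stay in $\HH_1$ for all $\alpha$ in a fixed neighborhood of $0$. Indeed, $f \in \HH_\lambda$ with $\lambda < 1$, and $\|F^\mu(f)\alpha\|_{\HH} \leq \tfrac{1}{\mu}\|\alpha\|\cdot C$ where $C$ bounds $\int_0^1 |\sigma^* K_s(f,x)^*|^2\,ds$ (finite because $K$ solves a linear ODE with bounded coefficients on $[0,1]$); so by choosing $\mu$ large we can guarantee $\tfrac{1}{2}\int_0^1 |\dot f_s + \dot F^\mu_s(f)\alpha|^2\,ds \leq 1$ for all $|\alpha| \leq \delta$, for some $\delta > 0$. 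This is the step that uses the strict inequality $\lambda < 1$ in an essential way — without slack in the $\HH$-norm budget one could not afford any perturbation.

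Then, since $\tilde g_{t,F^\mu(f)}(\cdot, f, x): \RR^n \to \RR^n$ is $C^1$ (the flow $\varphi_t(Q,\cdot)x$ depends smoothly on the control because $Q$ is a polynomial vector field and the relevant flows are globally defined on $[0,1]$, as established around \eqref{eqn:iterate1cont}–\eqref{eqn:iterate2cont}) and its derivative at $\alpha = 0$ is the invertible matrix $g'_{\mu,t}(f,x)$, the inverse function theorem gives an open neighborhood $V$ of $0$ in $\RR^n$ and an open neighborhood $W$ of $y_0 := \varphi_t(Q,f)x$ in $\RR^n$ such that $\tilde g_{t,F^\mu(f)}$ maps $V$ diffeomorphically onto $W$. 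Shrinking $V$ so that $V \subset B_\delta(0)$, every point of $W$ is of the form $\varphi_t(Q, f + F^\mu(f)\alpha)x$ with $f + F^\mu(f)\alpha \in \HH_1$, hence $W \subset \A_t^1(x)$. This proves $\A_t^1(x)$ contains a nonempty open set for every $t \geq s$.

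The main obstacle is the bookkeeping in the second paragraph: one must verify uniformly (in $\alpha$ over a fixed ball, and for the chosen $t$) that the perturbed control's $\HH$-norm stays under the budget $1$, which requires a clean bound on $\int_0^1 |\sigma^* K_s(f,x)^*|^2\,ds$ independent of the perturbation — this is routine given that $K$ is bounded on $[0,1]$, but it is where the argument could go wrong if the slack $1 - \lambda$ were not genuinely positive. A minor additional point is to note that the neighborhood $W$ can be taken around the specific endpoint $\varphi_t(Q,f)x$, so the conclusion is about $\A_t^1(x)$ containing an open set, not about which open set.
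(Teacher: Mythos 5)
Your proposal is correct and follows essentially the same route as the paper's proof: monotonicity of the Gramian in $t$ (so invertibility at time $s$ propagates to all $t\geq s$), the choice of $\mu$ large exploiting the slack $\lambda<1$ to keep the perturbed controls $f+F^\mu(f)\alpha$ in $\HH_1$, and the inverse function theorem applied to $\alpha\mapsto\varphi_t(Q,f+F^\mu(f)\alpha)x$ at $\alpha=0$, whose derivative $g'_{\mu,t}(f,x)=J_t(f,x)G_{\mu,t}(f,x)$ is invertible because $J_t$ is. (Only a cosmetic slip: $G_{\mu,t}$ already carries the factor $1/\mu$, so $g'_{\mu,t}=J_tG_{\mu,t}$, not $\tfrac{1}{\mu}J_tG_{\mu,t}$; this does not affect the argument.)
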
  

\begin{proof}
Let $\lambda,s >0$, $f\in \HH_\lambda$ be as in the statement of the result.  Note that $G_s(f,x)$ being invertible is equivalent to the matrix 
$G_{\mu, s}(f,x)$ being invertible for all $\mu >0$.  Since $f\in \HH_\lambda$ with $\lambda <1$ and $f_1, f_2, \ldots, f_n \in \HH$ are fixed in~\eqref{eqn:Fchoice}, there is $\mu=\mu(\lambda) >0$ large enough such that 
\begin{align*}
\frac{1}{2} \int_0^1 |\dot{f}_s|^2+ \sum_{i=1}^n |\dot{f}_{i,s}|^2 \, ds  \leq 1.   
\end{align*} 
Next note that for $ t\geq s$, $G_{\mu, t}(f,x)$ is invertible since for $\mu >0$ and $y\in \RR^n_{\neq 0}$
\begin{align*}
G_{\mu, t}(f,x) y \cdot y = \frac{1}{\mu} \int_0^t | \sigma^* K_v(f,x)^* y |^2 \, dv &\geq \frac{1}{\mu} \int_0^s| \sigma^* K_v(f,x)^* y |^2\, dv\\
&= G_{\mu,s}(f,x) y\cdot y >0,
\end{align*}
where in the last step we used symmetry and positive definiteness of $G_{\mu, s}(f,x)$.  Finally, since $\alpha \mapsto \varphi_t(Q, f+ F^\mu(f) \alpha)x$ has derivative $G_{\mu,t}(f,x)$ at $\alpha =0$, the assertion follows from the inverse function theorem. 

\end{proof}

\begin{remark}
The statement of Theorem~\ref{thm:gramm} does not specify whether the non-empty, open set in $\A_t^1(x)$ contains the initial point $x$ of the control problem~\eqref{eqn:controlgeneral}.  In general, $x$ need not be contained in $A_t^1(x)$ for any $t>0$ due to preferred directions in the system~\eqref{eqn:controlgeneral}. Below in Theorem~\ref{thm:transfer}, we provide a condition that guarantees $\A_t^1(0)$ contains an open ball about the origin. \end{remark}

To obtain a condition ensuring the invertibility of the Gramian $G_s(f,x)$ for some fixed $f\in \HH_\lambda$, $s>0$, and $0<\lambda<1$, we consider the following SDE on $ \RR^{2n}$
\begin{align}
\label{eqn:SDEbig}
\begin{cases}
dx_t = dW_t\\
dy_t= \Q(y_t) \, dt + \lambda\sigma  x_t \, dt \\
(x_0, y_0)=(0,x) \in \RR^{2n} 
\end{cases}
\end{align} 
where $W_t$ is a standard, $n$-dimensional Brownian motion.  
Note that the projection of this system onto the process $y_t$ corresponds to the choice of  
\begin{align}
\label{eq:scof}
f_t(\omega) = \int_0^t \lambda W_s (\omega) \, ds
\end{align}
 in~\eqref{eqn:controlgeneral}. Note that $t \mapsto f_t(\omega)$ belongs to $\HH$ for almost all $\omega$.  In fact, the controls of the form~\eqref{eq:scof} are dense in $\HH$, and hence we can view~\eqref{eqn:SDEbig} as being equivalent to the general control problem~\eqref{eqn:controlgeneral}.  
 
For fixed $\lambda \in (0,1)$, we introduce vector fields $Y_{0, \lambda} \in \text{Poly}(\RR^{2n})$ and $Y_\ell \in \text{Cons}(\RR^{2n})$, $\ell=1,2,\ldots,n$ (see Section~\ref{sec:NotTerm} for the definition of $\text{Poly}(\RR^{2n})$ and $\text{Cons}(\RR^{2n})$), associated to~\eqref{eqn:SDEbig} given by 
 \begin{align*}
 Y_{0, \lambda}=(\Q(y)+ \lambda \sigma x)\cdot \nabla_y \qquad \text{ and } \qquad Y_\ell = \partial_{x_\ell}, \,\,\, \ell=1,2,\ldots, n. 
 \end{align*}  
 If $[\,\cdot\,, \,\cdot\,]$ denotes the commutator of vector fields, then standard calculations produce  \begin{align*}
 [Y_\ell, Y_{0, \lambda}]= \lambda \sigma^\ell \partial_{y^\ell}, \,\,\, \ell=1,2,\ldots, n.
 \end{align*}
Define $Z_0, Z_1, \ldots, Z_n\in \Poly(\RR^n)$ by 
  \begin{align*}
 Z_0= \Q(y) \cdot \nabla_y, \qquad \text{ and } \qquad Z_\ell=\sigma^\ell \partial_{y^\ell}, \,\,\ell=1,2,\ldots, n, \end{align*} 
and consider the list of vector fields on $\RR^n$
 \begin{align}
 \label{eqn:Hlist}
 &Z_{\ell_1},  &&\ell_1=1,2,\ldots,n \\
 \nonumber &[Z_{\ell_1}, Z_{\ell_2}],  &&\ell_1,\ell_{2}=0,1,2,\ldots,n \\
 \nonumber &[[Z_{\ell_1}, Z_{\ell_2}], Z_{\ell_3}],  &&\ell_{1}, \ell_2, \ell_3 =0,1,2,\ldots,n\\
 \nonumber &\qquad \vdots  &&\qquad\vdots
 \end{align}   
 Whether or not the above list spans the tangent space at $x\in \RR^n$ is crucial for the invertibility of the Gramian, as discussed below.   

Next, consider the following random matrix
  \begin{align}
 \label{eqn:cov1}
 C_{t, \lambda}(\omega) = \int_0^t K_{s, \lambda}(\omega) \begin{bmatrix} I & 0 \\
 0 &0
 \end{bmatrix} K_{s, \lambda}^*(\omega) \, ds
 \end{align} 
 where the $I$ and $0$ denote, respectively, the $n\times n$ identity and the $n\times n$ zero matrix, and 
the $2n\times 2n$ matrix $K_{s, \lambda}(\omega)$ satisfies the integral equation
 \begin{align}
 \label{eqn:cov2}
 K_{s, \lambda}(\omega) = I - \int_0^s K_{v, \lambda}(\omega) \begin{bmatrix} 0 & 0 \\\lambda \sigma& DQ(y_v(\omega))\end{bmatrix} \, ds .
 \end{align}
 The matrix $C_{t, \lambda}$ is called the \emph{Malliavin covariance matrix} corresponding to the process $(x_t, y_t)$ solving~\eqref{eqn:SDEbig}.  Note that $C_{t, \lambda}$ and $K_{t, \lambda}$ also depend on the initial condition $x\in \RR^n$ as in~\eqref{eqn:SDEbig}; however, below we suppress this dependence for simplicity.   
 
 \begin{theorem}
 \label{thm:Malliavin}
Suppose that equation~\eqref{eqn:SDEp} is noise propagating and let $\lambda>0 $, $x\in \RR^n$ and $t\in (0,1]$ be fixed.  
 \begin{itemize}
\item[(i)]  Define the event $E_{\lambda, t}$ by    
\begin{align}
\label{eqn:Flam}
E_{\lambda, t} = \Big\{ \omega \, : \, \sup_{s\in [0,1]} |W_s(\omega) | \leq \sqrt{2} \Big\}\cap \{ \omega \, : \, C_{t, \lambda}(\omega)^{-1} \text{exists} \}.
\end{align}
If $\omega \in E_{\lambda, t}$ and $f$ is given by $f_\cdot = \lambda \int_0^\cdot W_s(\omega) \, ds$, then $f\in \HH_\lambda$ and $G_t(f, x)$ is invertible. 
\item[(ii)]   Suppose that the vector fields~\eqref{eqn:Hlist} span the tangent space at $x$.  Then 
\begin{align*}
\PP\{ \omega \, : \, C_{t, \lambda}(\omega)^{-1} \text{exists} \}=1, 
\end{align*}  
and consequently $\PP(E_{\lambda,t}) >0$ so that $E_{\lambda, t} \neq \emptyset$.    
 \end{itemize}
 \end{theorem}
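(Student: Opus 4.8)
The plan is to prove the two parts separately, since part (i) is essentially a deterministic unraveling of the definitions and part (ii) is the substantive probabilistic statement. For part (i), I would observe that the event $\{\sup_{s\in[0,1]}|W_s(\omega)|\le \sqrt 2\}$ directly controls the $\HH$-norm of the control: if $f_\cdot = \lambda\int_0^\cdot W_s(\omega)\,ds$, then $\dot f_s = \lambda W_s(\omega)$, so $\tfrac12\int_0^1|\dot f_s|^2\,ds = \tfrac{\lambda^2}{2}\int_0^1 |W_s(\omega)|^2\,ds \le \tfrac{\lambda^2}{2}\cdot 2 = \lambda^2$, giving $f\in\HH_\lambda$. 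Then I would match the ODEs: the $2n\times 2n$ matrix $K_{s,\lambda}$ in \eqref{eqn:cov2}, when read in block form with the lower-right $n\times n$ block governed by $DQ(y_v)$, restricts on the relevant $y$-block to exactly the matrix $K_s(f,x)$ appearing in the definition of the Gramian (with $f$ the control \eqref{eq:scof} generated by $W(\omega)$), because the projection of \eqref{eqn:SDEbig} onto $y_t$ is precisely the controlled equation \eqref{eqn:controlgeneral} with this $f$. Carrying out this block bookkeeping, the Malliavin covariance matrix $C_{t,\lambda}(\omega) = \int_0^t K_{s,\lambda}\bigl[\begin{smallmatrix} I&0\\0&0\end{smallmatrix}\bigr]K_{s,\lambda}^*\,ds$ has, in the block corresponding to the directions driven through $\sigma$, the same invertibility as $G_t(f,x) = \int_0^t K_s(f,x)\sigma\sigma^* K_s(f,x)^*\,ds$ — here one uses that $\sigma\sigma^*$ picks out exactly the coordinates in $I_0$, which are precisely the coordinates in which the noise enters \eqref{eqn:SDEbig}. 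So $C_{t,\lambda}(\omega)^{-1}$ existing forces $G_t(f,x)$ invertible, which is the claim.

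For part (ii), this is the standard Hörmander/Malliavin argument, and this is where I expect the main work. The matrix $C_{t,\lambda}$ is the Malliavin covariance matrix of the hypoelliptic diffusion $(x_t,y_t)$ on $\RR^{2n}$ solving \eqref{eqn:SDEbig}; by the classical Hörmander theorem (as in, e.g., the probabilistic treatments via Norris's lemma), $C_{t,\lambda}(\omega)$ is almost surely invertible provided the Lie algebra generated by the diffusion vector fields $Y_1,\dots,Y_n$ together with the drift $Y_{0,\lambda}$ spans $\RR^{2n}$ at the starting point $(0,x)$. The computation already recorded in the excerpt, $[Y_\ell,Y_{0,\lambda}] = \lambda\sigma^\ell\partial_{y^\ell}$, together with the $Y_\ell=\partial_{x_\ell}$ themselves, shows that the $x$-directions and the noisy $y$-directions $\{\partial_{y^\ell}:\sigma^\ell>0\}$ are in the algebra; taking further brackets with $Y_{0,\lambda}$ — whose $y$-component is $Q(y)\cdot\nabla_y$ up to the linear term $\lambda\sigma x$ which drops out of iterated brackets that already contain a $\partial_{y^\ell}$ — reproduces exactly the bracket list \eqref{eqn:Hlist} built from $Z_0=Q(y)\cdot\nabla_y$ and $Z_\ell=\sigma^\ell\partial_{y^\ell}$. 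Hence the hypothesis that \eqref{eqn:Hlist} spans the tangent space at $x$ is equivalent to Hörmander's bracket condition for \eqref{eqn:SDEbig} at $(0,x)$, and Malliavin calculus yields $\PP\{C_{t,\lambda}^{-1}\text{ exists}\}=1$ for every $t>0$.

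Finally, for the last assertion, I would note that $\PP\{\sup_{s\in[0,1]}|W_s|\le\sqrt2\}>0$ by the support theorem for Brownian motion (the constant path $0$ lies in the support, and $\{\sup|W_s|\le\sqrt2\}$ is a neighborhood of it), so intersecting with a probability-one event gives $\PP(E_{\lambda,t})>0$; in particular $E_{\lambda,t}\ne\emptyset$. The main obstacle is the Hörmander step in part (ii): one must cite or reprove the Malliavin-calculus criterion for nondegeneracy of $C_{t,\lambda}$ and carefully verify that the Lie-bracket generators of \eqref{eqn:SDEbig} at $(0,x)$ collapse — after the linear drift term $\lambda\sigma x$ is accounted for — to the list \eqref{eqn:Hlist}, so that the stated spanning hypothesis is genuinely the right condition. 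Everything else is bookkeeping: matching the two families of matrix ODEs in part (i) and invoking the Brownian support theorem at the end.
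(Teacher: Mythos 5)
Your overall strategy matches the paper's (read $W(\omega)$ as generating the control, relate the Malliavin matrix of the lifted system~\eqref{eqn:SDEbig} to the Gramian, and quote Malliavin's proof of H\"{o}rmander's theorem for part (ii), which is exactly what the paper does, with your bracket-reduction sketch being if anything more detailed than the paper's citation). The $\HH_\lambda$ computation and the positivity of $\PP\{\sup_{s\le1}|W_s|\le\sqrt2\}$ are also fine. The problem is the central step of part (i). You claim that, by ``block bookkeeping,'' the block of $C_{t,\lambda}$ ``corresponding to the directions driven through $\sigma$'' has the same invertibility as $G_t(f,x)$, on the grounds that $\sigma\sigma^*$ picks out the coordinates in which the noise enters~\eqref{eqn:SDEbig}. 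That misreads the structure of~\eqref{eqn:SDEbig}: there the Brownian motion enters only through $dx_t=dW_t$ (the identity block in~\eqref{eqn:cov1}), while $\sigma$ appears only in the drift coupling $\lambda\sigma x_t\,dt$. Writing $K_{s,\lambda}$ in blocks as in the paper and using~\eqref{eqn:cov2}, one finds $K^2\equiv0$, $K^4_s=K_s(f,x)$, $K^3_s=-\lambda\int_0^s K_v(f,x)\sigma\,dv$, so the lower-right block of the Malliavin matrix is
\begin{align*}
C^4_t=\lambda^2\int_0^t\Big(\int_0^s K_v(f,x)\sigma\,dv\Big)\Big(\int_0^s K_w(f,x)\sigma\,dw\Big)^{\!*}ds,
\end{align*}
which is a double-time-integral object and is \emph{not} the Gramian $G_t(f,x)=\int_0^t K_s(f,x)\sigma\sigma^*K_s(f,x)^*\,ds$. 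So ``$C_{t,\lambda}$ invertible $\Rightarrow$ $G_t(f,x)$ invertible'' is not a block identification; it needs an argument.

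The missing argument is short but essential, and it is exactly what the paper supplies: invertibility of the symmetric nonnegative matrix $C_{t,\lambda}$ forces $C^4_t y\cdot y>0$ for every $y\neq0$, i.e.\ $\int_0^t\big|\int_0^s\sigma^*K_v(f,x)^*y\,dv\big|^2ds>0$; hence there is $s_1\in(0,t]$ with $\int_0^{s_1}\sigma^*K_v(f,x)^*y\,dv\neq0$, so by continuity $\sigma^*K_v(f,x)^*y\neq0$ on a subset of $(0,t]$ of positive Lebesgue measure, and therefore $G_t(f,x)y\cdot y\geq\int_{S_2}|\sigma^*K_v(f,x)^*y|^2dv>0$. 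Without this step (or an equivalent one) your part (i) does not go through as written, since the mechanism you invoke for transferring invertibility is based on an incorrect description of where the noise and $\sigma$ sit in the lifted system. Everything else in your proposal is consistent with the paper's proof.
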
 

\begin{remark}
The idea for the proof below is inspired by the proof of \cite[Proposition 8.4]{MP_06} where the noise is used as a control.  
\end{remark}

 \begin{proof}[Proof of Theorem~\ref{thm:Malliavin}]
 Fix $t\in (0,1]$, $x\in \RR^n$ and $\lambda >0$.  Then the proof of part (ii) follows from Malliavin's proof of H\"{o}rmander's hypoellipticity theorem~\cite{Bell_12, KS_84, KS_85, KS_87, Norris_06, Nualart_09}.

To establish part (i), let $\omega \in E_{\lambda, t}$ and $f_\cdot= \lambda \int_0^{\cdot} W_s(\omega) \,ds$.  Then $\dot{f}_{s} =   \lambda W_s(\omega) $ and $f \in \HH_\lambda$ follows.  Next, we show that $G_t(f,x)$ is invertible.  Write     
\begin{align*}
 C_{t, \lambda}(\omega) = \begin{bmatrix}
 C_{t, \lambda}^1(\omega) & C_{t, \lambda}^2(\omega)\\
 C_{t, \lambda}^3(\omega) & C_{t, \lambda}^4 (\omega)
 \end{bmatrix}
 \qquad \text{ and } \qquad 
 K_{s, \lambda}(\omega) = \begin{bmatrix}
 K^1_{s, \lambda}(\omega) & K^2_{s, \lambda}(\omega)\\
 K^3_{s, \lambda}(\omega) & K^4_{s, \lambda} (\omega)
 \end{bmatrix}
 \end{align*}
 where $C^i_{t, \lambda}(\omega), K^i_{s, \lambda}(\omega)$ are $n\times n$ matrices.  Combining~\eqref{eqn:cov1} and~\eqref{eqn:cov2} and suppressing the dependence on $\omega$ and $\lambda$, we find that the following relations must be satisfied
 \begin{align*}
 C^4_t&= \int_0^t K_s^3 (K_s^3)^* \, ds, \quad K_s^3= -\int_0^s K_v^4 \lambda \sigma \,dv, \quad K^4_s = I- \int_0^t K^4_s DQ(y_v(\omega)) \, dv .  
 \end{align*}
 In particular,
 \begin{align*}
 C_t^4 = \lambda^2\int_0^t \int_0^s K_v^4 \sigma \, dv \int_0^s  \sigma^* (K_w^4)^* \, dw \, ds,
 \end{align*}
and consequently for any $y \in \RR^n_{\neq 0}$ we have \begin{align*}
 0<C_t^4 y\cdot y =\lambda^2  \int_0^t \bigg|\int_0^s \sigma^* (K_v^4)^* y \, dv\bigg|^2 \, ds.  
 \end{align*}
 Note that, in the above, we used the fact that $C_{t}$ is strictly positive-definite. 
Let $s_1\in (0, t]$ be such that 
 \begin{align*}
 \int_0^{s_1} \sigma^* (K_v^4)^* y \, ds \neq 0.  
 \end{align*}
 In particular, there exists a subset $S_2\subset (0, t]$ of positive Lebesgue measure such that  
 \begin{align*}
 \sigma^* (K_{s_2}^4)^* y \neq 0  
 \end{align*}
 for all $s_2\in S_2$.  This in turn implies that 
  \begin{align*}
 \int_0^t K_s^4 \sigma \sigma^* (K_s^4)^*  y \cdot y\, ds  \geq \int_{S_2}  | \sigma^* (K_{v}^4)^* y|^2 \, ds >0. 
 \end{align*}
 Since $G_t(f,x)= \int_0^t K_s^4 \sigma \sigma^* (K_s^4)^* \, ds $ is the Gramian matrix associated to the control problem~\eqref{eqn:controlgeneral} with control $f$, the proof is finished. 
 \end{proof}

  %
%
% Because $J_t$ is invertible, it suffices to consider the control \textbf{Gramian} matrix
%\begin{align*}
%G_t= \int_0^t K_s \sigma \sigma^* K_s^* \, ds
%\end{align*}
%is invertible.  Note that $G_t$ is symmetric, nonnegative definite. Suppose there exists $y\in \RR^n_{\neq 0}$ such that $G_1 y \cdot y =0$.  This implies that 
%\begin{align*}
%\sigma^* K_s^* y =0
%\end{align*}
%for all $s\in [0,1]$.  In particular, if $\sigma_j$ denotes the $j$th column of $\sigma$, then the above implies
%\begin{align*}
%\langle K_s \sigma_j, y \rangle =0
%\end{align*}  
%for all $s\in [0,1]$ and $j=1,2,\ldots, n$. 
%
%
%Differentiating the above expression with respect to time $s$ produces 
%\begin{align*}
%\langle K_s Dr(\varphi_s(f)) \sigma_j, y\rangle=\langle K_s [\sigma_j, r](\varphi_s(f)), y\rangle=0
%\end{align*}
%for all $s\in [0,1]$ and $j=1,2,\ldots, n$.  Let $X$ be any vector field obtained thus far in our computation so that 
%\begin{align*}
%\langle K_s X(\varphi_s(f)), y\rangle=0
%\end{align*} 
%for all $s\in [0,1]$. Then differentiating with respect to $s$ gives 
%\begin{align*}
%&\langle - K_s Dr(\varphi_s(f)) X(\varphi_s(f)) +K_s DX(\varphi_s(f)) ( r(\varphi_s(f)) + \sigma \dot{f}_s) , y \rangle\\
%&=\langle K_s [r+ \sigma \dot{f}_s, X](\varphi_s(f)), y \rangle =0
%\end{align*}
%for all $s\in [0,1]$. 
%

\subsection{Relating the control sets}
Recalling the definitions~\eqref{eqn:smallcont} and~\eqref{eqn:largecont} of $\A_t^1(0)$ and  $\A_t^2(0)$, note that for $t\in (0,1]$, $\A_t^1(0)\subset \A_t^2(0)$.  In addition, a combination of Theorem~\ref{thm:gramm} and Theorem~\ref{thm:Malliavin} implies that if the vector fields~\eqref{eqn:Hlist} span the tangent space at $0\in \RR^n$, then $\A_t^1(0)$ contains a nontrivial open set for all $0<t\leq 1$.  However, such an open set might not contain the origin for any $0<t\leq 1$ if the dynamics pushes the control system in preferred directions.  To better understand this open set, we first explore the relationship between $\A_t^1(0)$ and $\A_t^2(0)$ using scalings, when possible.

\begin{definition}
We call the control system~\eqref{eqn:controlgeneral} \emph{component homogeneous} if, for every $j$, there exist a constant $\alpha_j \geq 0$ such that \begin{align}
\label{eqn:chom}
\varphi_t^j(\Q,\epsilon f)0 = \epsilon^{\alpha_j}\varphi^j_t(Q,f)0
\end{align}
for all $\epsilon >0$, $f\in \HH$, $t\in(0,1]$, where $\varphi^j$ is the $j$th component of $\varphi$.   
\end{definition}

\begin{remark}
The relationship~\eqref{eqn:chom} is different than the one afforded by the scaling analysis in previous sections, where we scaled both time and space.  
\end{remark}

Our first result provides us with some directions along which the dynamics of a component homogeneous systems moves for an appropriate control $f$. 
\begin{proposition}
\label{prop:chscaling}
Suppose that \eqref{eqn:SDEp} is noise propagating and~\eqref{eqn:controlgeneral} is component homogeneous.  If for some $t\in (0,1]$ and $v\in \RR^n_{\neq 0}$ we have $v\in\text{\emph{cl}}(\A_t^2 (0)),$ then there exist $f \in \HH_{1}$ and $\epsilon >0$ such that $\varphi_t(\Q,f)0\cdot v >0$.  

\end{proposition}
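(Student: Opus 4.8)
The plan is to use the hypothesis $v\in\cl(\A_t^2(0))$ to produce a control that steers $0$ to a point close to $v$ at time $t$, and then to invoke component homogeneity to rescale that control into $\HH_1$ without disturbing the signs of the coordinates that matter.

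First I would fix $\delta:=\tfrac12\min\{|v^j|\,:\,v^j\neq 0\}$, which is strictly positive because $v\neq 0$ and the minimum runs over a finite nonempty set. By $v\in\cl(\A_t^2(0))$ there is $g\in\HH$ with $|\varphi_t(\Q,g)0-v|<\delta$; writing $b:=\varphi_t(\Q,g)0$ this yields $|b^j-v^j|<\delta$ for every $j$, and hence
\begin{align*}
b^j v^j\;\geq\;(v^j)^2-\delta|v^j|\;=\;|v^j|\,(|v^j|-\delta)\;>\;0\qquad\text{for every }j\text{ with }v^j\neq 0,
\end{align*}
while the coordinates with $v^j=0$ contribute nothing to the inner product with $v$. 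Thus the approximating endpoint $b$ already has the correct sign in each relevant coordinate.

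Next, since $g$ need not belong to $\HH_1$, I would choose $\epsilon\in(0,1]$ small enough that $\epsilon g\in\HH_1$, i.e.\ $\epsilon^2\cdot\tfrac12\int_0^1|\dot{g}_s|^2\,ds\leq 1$; such an $\epsilon$ exists for any $g\in\HH$. Set $f:=\epsilon g$. By the structure of $\Q\in\{P_\text{L},P_\text{D}\}$ (the integrability discussion around \eqref{eqn:controlLIL} and \eqref{eqn:controllaw}) the trajectory $\varphi_\cdot(\Q,f)0$ is defined on all of $[0,1]$, and component homogeneity \eqref{eqn:chom} gives $\varphi_t^j(\Q,f)0=\epsilon^{\alpha_j}b^j$ for each $j$. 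Therefore
\begin{align*}
\varphi_t(\Q,f)0\cdot v\;=\;\sum_{j=1}^n\epsilon^{\alpha_j}b^j v^j\;=\;\sum_{j\,:\,v^j\neq 0}\epsilon^{\alpha_j}b^j v^j\;\geq\;\sum_{j\,:\,v^j\neq 0}\epsilon^{\alpha_j}\,|v^j|\,(|v^j|-\delta)\;>\;0,
\end{align*}
since the index set $\{j:v^j\neq 0\}$ is nonempty and every summand is strictly positive; this produces the desired $f\in\HH_1$, with $\epsilon$ the scaling parameter of the construction.

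The only real subtlety — and the step I would be most careful about — is that rescaling the control by $\epsilon$ multiplies the $j$th endpoint coordinate by $\epsilon^{\alpha_j}$, with exponents that generally differ across $j$; a priori this could flip the sign of $\varphi_t(\Q,f)0\cdot v$, since small-norm controls need not steer near $v$. The resolution is exactly the order in which one makes the choices above: by first approximating $v$ closely enough (that is, taking $\delta$ small compared to the nonzero coordinates of $v$) one forces every coordinate of $\varphi_t(\Q,g)0$ to agree in sign with the corresponding coordinate of $v$, so that after rescaling each summand $\epsilon^{\alpha_j}b^jv^j$ is individually nonnegative for every $\epsilon>0$ and no cancellation can occur.
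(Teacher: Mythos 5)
Your proof is correct and follows essentially the same route as the paper: approximate $v$ by an endpoint $\varphi_t(\Q,g)0$ closely enough that each coordinate with $v^j\neq 0$ has the right sign, then use component homogeneity to rescale the control into $\HH_1$, noting that the rescaling multiplies each summand by a positive factor $\epsilon^{\alpha_j}$ and hence cannot destroy positivity of the inner product. Your explicit choice of $\delta$ just quantifies the "choose $k$ large enough" step in the paper's argument.
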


\begin{proof}
By assumption there exists a sequence $f_\ell \in \HH$ such that $\varphi_t(Q,f_\ell)0 \rightarrow v$ as $\ell \rightarrow \infty$.  Choose $k \geq 1$ large enough so that $$v^j \varphi_t^j(\Q,f_k)0 >0$$ for any $j$ for which $v^j \neq 0$.  Since the system is component homogeneous, for any $\epsilon >0$
\begin{align*}
\varphi_t(\Q,\epsilon f_k)0 \cdot v= \sum_{j: v^j \neq 0}v_j \varphi_t^j(\Q,\epsilon f_k)0  = \sum_{j \, : \, v^j \neq 0} \epsilon^{\alpha_j} v^j \varphi_t^j (\Q,f_k)0  >0. 
\end{align*}  
Picking $\epsilon >0$ small enough so that $\epsilon f_k \in \HH_{1}$ finishes the proof. 

\end{proof}

Next, we provide criteria that ensures $\A_t^1(0)\supset B_r(0)$ for some $r>0$.

\begin{theorem}
\label{thm:transfer}
  Suppose that the system \eqref{eqn:SDEp} is noise propagating and~\eqref{eqn:controlgeneral} is component homogeneous.  Suppose furthermore that the list~\eqref{eqn:Hlist} spans the tangent space at $0 \in \RR^n$ and there exists $r_1,t>0$ such that $0 \in \A_{t}^2(x)$ for all $x\in B_{r_1}(0)$.  Then for any $s>0$ small enough, there exists $r_2 >0$ such that $\A_{t+s}^1(0)\supset  B_{r_2}(0)$.      

\end{theorem}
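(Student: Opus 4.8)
The plan is to first reach a small open set near the origin using controls in $\HH_1$, then to \emph{open up} the reachable set near $0$ by applying the inverse function theorem in the initial variable, and finally to use component homogeneity to rescale all the controls so that they fit into $\HH_1$. Write $D_\epsilon=\mathrm{diag}(\epsilon^{\alpha_1},\dots,\epsilon^{\alpha_n})$ for the exponents $\alpha_j$ appearing in~\eqref{eqn:chom}, so that component homogeneity reads $\varphi_r(\Q,\epsilon f)0=D_\epsilon\varphi_r(\Q,f)0$ for all $r\in(0,1]$, $f\in\HH$, $\epsilon>0$; note that $D_\epsilon$ is a linear isomorphism of $\RR^n$ and $D_\epsilon 0=0$. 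Fix $s>0$ small (in particular $s+t\le 1$, so that concatenations of controls on $[0,s+t]$ stay in $\HH$); the goal is to produce $r_2>0$ with $\A^1_{t+s}(0)\supset B_{r_2}(0)$.

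Since the list~\eqref{eqn:Hlist} spans the tangent space at $0$, Theorem~\ref{thm:Malliavin} furnishes, for some $\lambda\in(0,1)$, a control $f^\ast\in\HH_\lambda$ with $G_s(f^\ast,0)$ invertible, and the inverse-function-theorem argument in the proof of Theorem~\ref{thm:gramm} then exhibits a ball $O_0\subset\A^1_s(0)$ about $q_0:=\varphi_s(\Q,f^\ast)0$ such that each $q\in O_0$ is reached from $0$ at time $s$ by a control $f_q=f^\ast+F^\mu(f^\ast)\alpha(q)\in\HH_1$ (notation of~\eqref{eqn:Fchoice}) depending continuously on $q$; shrinking $O_0$ we may assume $\overline{O_0}$ is compact. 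By component homogeneity, $D_{\epsilon'}O_0=\{\varphi_s(\Q,\epsilon' f_q)0:q\in O_0\}$ is an open neighborhood of $D_{\epsilon'}q_0$ contained in $\A^1_s(0)$ for every $\epsilon'\in(0,1)$, and since $\epsilon' f_q\to 0$ in $\HH$ uniformly in $q\in\overline{O_0}$ as $\epsilon'\to 0$, we may fix $\epsilon'$ small enough that $O_1:=D_{\epsilon'}O_0\subset B_{r_1}(0)$.

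The heart of the matter is opening up $\A^1_{t+s}(0)$ near the origin. Set $x_\ast:=D_{\epsilon'}q_0\in O_1\subset B_{r_1}(0)$; by hypothesis there is $g_\ast\in\HH$ with $\varphi_t(\Q,g_\ast)x_\ast=0$. The map $x\mapsto\varphi_t(\Q,g_\ast)x$ is $C^1$ in $x$, with derivative at $x_\ast$ equal to the Jacobian flow $J_t(g_\ast,x_\ast)$, which is invertible (recall $K_tJ_t=I$); hence by the inverse function theorem there is an open neighborhood $O_2$ of $x_\ast$, which we take with $\overline{O_2}$ a compact subset of $O_1$, that $\varphi_t(\Q,g_\ast)$ carries diffeomorphically onto an open neighborhood $V$ of $\varphi_t(\Q,g_\ast)x_\ast=0$. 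For $x\in O_2$ let $H_x\in\HH$ be the control that equals $\epsilon' f_{D_{\epsilon'}^{-1}x}$ on $[0,s]$ (which steers $0$ to $x$ at time $s$) and equals $g_\ast$ on $[s,s+t]$; then $\varphi_{s+t}(\Q,H_x)0=\varphi_t(\Q,g_\ast)x$, so $\{\varphi_{s+t}(\Q,H_x)0:x\in O_2\}=V$, and the energies $\tfrac12\int_0^{s+t}|\dot H_x|^2$ are bounded uniformly over $x\in\overline{O_2}$ by continuity and compactness.

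Finally, component homogeneity gives $\varphi_{s+t}(\Q,\epsilon H_x)0=D_\epsilon\varphi_{s+t}(\Q,H_x)0$, whence $\{\varphi_{s+t}(\Q,\epsilon H_x)0:x\in O_2\}=D_\epsilon V$, an open neighborhood of $D_\epsilon 0=0$; choosing $\epsilon>0$ small enough that every $\epsilon H_x$ lies in $\HH_1$ then yields $D_\epsilon V\subset\A^1_{s+t}(0)$, and any $r_2>0$ with $B_{r_2}(0)\subset D_\epsilon V$ completes the argument (which in fact works for every $s\in(0,1-t]$). The step I expect to be the real obstacle is this opening-up: the naive approach is to seek a single short control looping $0$ back to $0$ with invertible Gramian, but the only available loop uses the hypothesis's return control $g_\ast$, whose norm is uncontrolled, and rescaling it degenerates the Gramian toward the possibly singular linearization at $0$. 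The device above avoids ever requiring the Gramian to be nondegenerate along a loop: it opens the reachable set by the inverse function theorem in the initial condition — where the relevant derivative, the Jacobian flow $J_t$, is automatically invertible — and only afterwards contracts all the controls into $\HH_1$ via homogeneity.
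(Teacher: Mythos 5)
Your proof is correct, and it takes a genuinely different route from the paper's. The paper concatenates a Malliavin-generated control on $[0,s]$ (landing at a point $y\in B_{r_1}(0)$) with the return control supplied by the hypothesis, so that the full control $f$ satisfies $\varphi_{t+s}(\Q,f)0=0$; it then rescales this loop by $1/k$ (still a loop, by component homogeneity, since $D_{1/k}0=0$) and opens up $\A^1_{t+s}(0)$ around the origin by showing that the Gramian $G_{t+s}(\tfrac1k f,0)$ is invertible --- inherited from $G_s(\tfrac1k f_1,0)$ via monotonicity of the Gramian in time, which is exactly why the event $E_s$ is intersected over all $k$ --- and then applying the inverse function theorem in the control-perturbation parameter as in Theorem~\ref{thm:gramm}. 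You instead open up \emph{before} rescaling: the Gramian/Malliavin machinery (Theorems~\ref{thm:gramm} and~\ref{thm:Malliavin}) is used only on the first leg to produce an open set of intermediate states reachable with $\HH_1$-controls, the return leg is opened by the inverse function theorem in the \emph{initial condition}, whose derivative is the Jacobian flow $J_t(g_*,x_*)$ and hence automatically invertible, and homogeneity enters only at the end to contract the whole family of concatenated controls into $\HH_1$, openness at $0$ surviving because $D_\epsilon$ is a linear isomorphism fixing $0$. This buys a proof that never needs Gramian nondegeneracy along the rescaled or concatenated control (no intersection over $k$, no monotonicity argument), at essentially no cost. One shared soft spot: your claim that $O_1=D_{\epsilon'}O_0\subset B_{r_1}(0)$ (all you really need is $x_*\in B_{r_1}(0)$) tacitly uses that $\varphi_s(\Q,0)0$ lies in $B_{r_1}(0)$, since $\varphi_s(\Q,\epsilon'f_q)0$ clusters there as $\epsilon'\to0$; this is where ``$s$ small'' is genuinely used, or one can observe that the hypotheses force every exponent $\alpha_j>0$ (if some $\alpha_j=0$, continuity of the solution map in the control makes the $j$th coordinate of every point of $\A^2_s(0)$ equal to $\varphi^j_s(\Q,0)0$, so $\A^1_s(0)$ lies in a hyperplane, contradicting Theorems~\ref{thm:gramm} and~\ref{thm:Malliavin}), whence $\varphi_\cdot(\Q,0)0\equiv0$; the paper's proof glosses over the same point when asserting $\sup_{v\le 1}|y_{v,k}|<r_1$ on $\{\sup|W|\le\eta\}$, so this is not a gap relative to the paper, but your parenthetical that the argument works for every $s\in(0,1-t]$ does require that extra observation.
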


\begin{proof}
Let $r_1, t>0$ be as in the statement of the result, and let $s>0$ be arbitrary. 
We use the system~\eqref{eqn:SDEbig} to generate our control on $[0, s]$.  Define an event 
\begin{align*}
E_s= \bigcap_{k=1}^\infty E_{\frac{1}{k}, s }\cap \bigg\{ \sup_{0\leq v \leq s }|y_{v, k} | < r_1 \bigg\},
\end{align*}
where $E_{\lambda,t}$ is as in~\eqref{eqn:Flam} and $y_{v, k}$ denotes $y_v$ as in~\eqref{eqn:SDEbig} with $\lambda =1/k$ and $x=0$. Also, set 
\begin{align*}
f_1(\cdot, \omega) = \int_0^{\cdot} W_v(\omega) \, dv,
\end{align*}
which belongs to $\HH$ for almost all $\omega$. 
By Theorem~\ref{thm:Malliavin}
\begin{align}
 \PP\bigg(\bigcap_{k=1}^\infty E_{\frac{1}{k},s} \bigg) = \PP\bigg(\bigg\{ \sup_{r\in [0,1]} |W_r| \leq \sqrt{2}\bigg\} \cap \bigcap_{k=1}^\infty \big\{ C_{s,\frac{1}{k}}^{-1} \text{ exists} \big\}\bigg)= \PP\bigg(\bigg\{ \sup_{r\in [0,1]} |W_r| \leq \sqrt{2}\bigg\} \bigg).
\end{align}
Since $x=0$, there exists $\eta >0$ such that for any $k\geq 1$ we have 
\begin{align}
\sup_{0\leq v \leq 1} |y_{v,k}| < r_1
\end{align}
provided $\omega \in \{ \sup_{r\in [0,1]} |W_r| \leq \eta \}$.  We stress that $\eta$ is independent of $k$ and so we have that 
\begin{align*}
 \bigg\{ \sup_{r\in [0,1]} |W_r | \leq \eta\bigg\}\subset \bigcap_{k=1}^\infty \bigg\{\sup_{0\leq v\leq 1} |y_{v,k}| < r_1 \bigg\}.
\end{align*}   
Hence, $E_s$ for $s\leq 1$ has positive probability.  Fix $\omega \in E_s$ and define
\begin{align*}
\varphi_s(Q, f_1(\cdot, \omega)) 0 =y \in B_{r_1}(0).  
\end{align*}
Also, by Theorem~\ref{thm:Malliavin}, we have that $G_s(\tfrac{1}{k}f, 0)$ is invertible. 
By hypothesis, since $0 \in \A_t^2(y)$, there is $f_2\in \HH$ such that $\varphi_{t}(Q, f_2)y =0$. Define
\begin{align*}
f(v) = \begin{cases}
f_1(v, \omega) & \text{ if } 0 \leq v \leq s \\
f_2(v-s) + f_1(s, \omega) & \text{ if } s<v\leq t+s. 
\end{cases}.
\end{align*} 
Since $f$ is continuous, standard arguments imply that $f\in \HH$.  Moreover, $\varphi_{t+s}(Q, f) 0=0$.  Since the system~\eqref{eqn:controlgeneral} is component homogeneous,  it holds that $\varphi_{t+s}(Q, \tfrac{1}{k} f) 0=0$ for all $k\geq 1$.  Choose $k\geq 1$ large enough so that $\frac{1}{k} f \in \HH_{1/2}$.  Applying Theorem~\ref{thm:Malliavin} and the arguments of Theorem~\ref{thm:gramm}, we have that $G_{t+s}(\tfrac{1}{k}f , 0)$ is invertible.  The result now follows from the inverse function theorem as in the proof of Theorem~\ref{thm:gramm}.
\end{proof}
%\begin{remark}
%In the statement of Theorem~\ref{thm:transfer}, the hypothesis that there exists $r_1, t>0$ such that $0 \in \A^2_t(x)$ for all $x\in B_{r_1}(0)$ is somewhat technical.  Essentially, once it is shown that there are ``enough" points in common in the sets $\cl(\A_t^2(x))$, $x\in B_{r_1}(0)$, then the closures can be removed.  We show how to do this below in Section~\ref{sec:gcontrol}. \end{remark}
%

The following lemma provides a simple criterion to verify whether system~\eqref{eqn:controlgeneral} is component homogeneous.

\begin{lemma}
\label{lem:comphom}
Suppose that the system~\eqref{eqn:SDEp} is noise propagating and for every $j=1,2,\ldots, n$, $\Q^j$ is a monomial.  Then the control system~\eqref{eqn:controlgeneral} is component homogeneous.   
\end{lemma}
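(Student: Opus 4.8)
The plan is to verify the defining relation~\eqref{eqn:chom} directly by exploiting the recursive/triangular structure of the vector field $\Q \in \{P_\text{L}, P_\text{D}\}$ inherited from the noise-propagating hypothesis. Recall that $\Q$ has the crucial feature (see the discussion around~\eqref{eqn:iterate1cont}--\eqref{eqn:iterate2cont}) that for $j \in I_0$ one has $\Q^j \equiv 0$, and for $j \in I_{k+1}$ with $k+1 \le \dim(P,\sigma)$ the component $\Q^j$ depends only on the variables $x^i$ with $i \in \cup_{\ell=0}^{k} I_\ell$. Since additionally each $\Q^j$ is assumed to be a monomial, we can write $\Q^j(x) = c_j \prod_{i \in \cup_{\ell \le k} I_\ell} (x^i)^{n_{j,i}}$ for $j \in I_{k+1}$, and $\Q^j \equiv 0$ for $j \in I_0$.

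First I would define the exponents $\alpha_j$ by induction on the layer index: set $\alpha_j = 1$ for $j \in I_0$, and for $j \in I_{k+1}$ set $\alpha_j = \sum_{i} n_{j,i}\,\alpha_i$ where the sum runs over the variables appearing in the monomial $\Q^j$ (all of which lie in earlier layers, so $\alpha_i$ is already defined). I would then prove by induction on the layer containing $j$ that $\varphi_t^j(\Q, \epsilon f)0 = \epsilon^{\alpha_j}\varphi_t^j(\Q, f)0$ for all $\epsilon > 0$, $f \in \HH$, $t \in (0,1]$. The base case $j \in I_0$ is immediate from the explicit formula $\varphi_t^j(\Q, \epsilon f)0 = \sigma^j (\epsilon f)_t^j = \epsilon \sigma^j f_t^j = \epsilon \varphi_t^j(\Q,f)0$. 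For the inductive step, fix $j \in I_{k+1}$; using the integral representation $\varphi_t^j(\Q,\epsilon f)0 = \int_0^t \Q^j(\varphi_s(\Q,\epsilon f)0)\,ds$ (here $\sigma^j = 0$), substitute the inductive hypothesis for each coordinate $i$ appearing in the monomial $\Q^j$ to get $\Q^j(\varphi_s(\Q,\epsilon f)0) = c_j \prod_i \big(\epsilon^{\alpha_i}\varphi_s^i(\Q,f)0\big)^{n_{j,i}} = \epsilon^{\sum_i n_{j,i}\alpha_i}\, \Q^j(\varphi_s(\Q,f)0) = \epsilon^{\alpha_j}\Q^j(\varphi_s(\Q,f)0)$, and then pull the constant $\epsilon^{\alpha_j}$ out of the integral. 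This gives exactly $\varphi_t^j(\Q,\epsilon f)0 = \epsilon^{\alpha_j}\varphi_t^j(\Q,f)0$, completing the induction and hence the proof.

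The only real subtlety — rather than an obstacle — is bookkeeping: one must be careful that the monomial $\Q^j$ for $j \in I_{k+1}$ genuinely involves only coordinates from strictly earlier layers $\cup_{\ell=0}^{k} I_\ell$, so that the induction is well-founded and the $\alpha_i$'s are available; this is precisely the structural fact recorded just before Definition~\ref{def:dimQsig} (namely $P_\text{L}^j(x) = P_\text{L}^j(\pi_{\cup_{\ell=1}^{k} I_\ell}(x))$, with the analogous statement for $P_\text{D}$). One should also note that the argument needs $\Q^j$ to be a single monomial, not a sum, since otherwise the exponent $\sum_i n_{j,i}\alpha_i$ could differ across terms; this is exactly the hypothesis of the lemma. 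No issue of existence of $\varphi_t$ on $[0,1]$ arises, as that is already guaranteed by the structure of $\Q$ as discussed in the excerpt. With $\alpha_j \ge 0$ (in fact $\alpha_j \ge 1$) for all $j$, the definition of component homogeneity is met.
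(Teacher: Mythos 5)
Your proposal is correct and follows essentially the same route as the paper: an induction over the propagation layers $I_0, I_1, \ldots$, with base case $\varphi_t^j(\Q,\epsilon f)0=\epsilon\sigma^j f_t^j$ for $j\in I_0$, and in the inductive step using that $\Q^j$ is a single monomial in variables from earlier layers together with the integral representation to pull out the factor $\epsilon^{\alpha_j}$ with $\alpha_j=\sum_i n_{j,i}\alpha_i$. The only small slip is the parenthetical claim that $\alpha_j\geq 1$: a constant (degree-zero) monomial, as occurs for instance in Example~\ref{ex:LD2} when $p_0^j\neq 0$, gives $\alpha_j=0$, but since component homogeneity only requires $\alpha_j\geq 0$ this does not affect the argument.
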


\begin{remark}
Note that each system considered in Example~\ref{ex:LD1}, Example~\ref{ex:LD2} and Example~\ref{ex:lor96} is such that $P_\text{D}^j=P_\text{L}^j$ is a monomial for all $j$.  Since each of these examples is noise propagating, it follows by Lemma~\ref{lem:comphom} that they are also component homogeneous. 
\end{remark}

\begin{proof}[Proof of Lemma~\ref{lem:comphom}]
If $j \in I_0$, then 
\begin{align*}
\varphi_t^j(\Q,\epsilon f) 0 = \epsilon \sigma^j f_t^j =\epsilon  \varphi_t^j(\Q,f)0.
\end{align*}
Next, suppose inductively that there exists $\alpha_j \geq 0$ such that 
\begin{align*}
\varphi_t^j(\Q,\epsilon f)0 = \epsilon^{\alpha_j}\varphi_t^j(\Q,f)0
\end{align*}
 for any $\epsilon >0$, $j\in \bigcup_{\ell=0}^k I_\ell$ and $f\in \HH$.  Let $j\in I_{k+1}$.  Since $Q^j$ is a monomial and $j\in I_{k+1}$, then  
 \begin{align*}
 Q^j(x)= \beta (x^1)^{j_1} \cdots (x^n)^{j_n}
 \end{align*}
 where $\beta \in \RR_{\neq 0}$ and $j_1,\ldots, j_n \in \Z_{\geq 0}$ satisfy $j_i=0$ if $i\notin \cup_{\ell=0}^k I_\ell$.  Let $\nu_i = \alpha_i$ if $i\in \cup_{\ell=0}^k I_\ell$ and $\nu_i=0$ otherwise.  Then\begin{align*}
\varphi_t^j(\Q,\epsilon f) 0= \int_0^t \Q^j \bigg(\pi_{\bigcup_{\ell=0}^k I_\ell}\varphi_s(\Q,\epsilon f)0 \bigg)\, ds&= \int_0^t  \Q^j \Big(  \epsilon^{\nu_1} \varphi_{s}^1(\Q,f)0, \ldots, \epsilon^{\nu_n} \varphi_s^n(\Q,f)0 \Big) \, ds \\
&=\epsilon^{j_1 \nu_1+\cdots + j_n \nu_n} \int_0^t \Q^j(\varphi_s(\Q,f)0) \, ds\\
&= \epsilon^{j_1 \nu_1+\cdots + j_n \nu_n} \varphi_t^j(\Q,f)0,
\end{align*}
as desired.
\end{proof}

\section{Geometric control theory}
\label{sec:gcontrol}
We next discuss methods from geometric control theory that are useful in identifying points in the sets
\begin{align}
\cl(\A_t^2(x)), \,\,\, 0<t\leq 1, \,\, x\in \RR^n.
\end{align}
We also indicate how to characterize points in the sets 
\begin{align}
\A_t^2(x), \,\,\, 0 < t \leq 1, \,\, x\in \RR^n. 
\end{align}
The central idea is to employ large controls on small time intervals, which allows one to uncover simplified directions in which the solution of the control problem can move.  See, for example,~\cite{GHHM_18, HM_15, Jur_97, JK_85}.  Note that such large scalings are possible in this context since the controls belong to $\HH$ rather than $\HH_{1}$.    If the limiting system is component homogeneous, this information can then be transferred back to the restricted sets $\cl(\A_t^1(0)), 0<t\leq 1$ or even $\A_t^1(0)$, $0<t \leq 1$.  See Proposition~\ref{prop:chscaling} and Theorem~\ref{thm:transfer}.

For convenience in this section, we closely follow the presentation in~\cite{GHHM_18} which uses parametrized local semigroups in place of vector fields.  

\subsection{Parameterized continuous local semigroups and the uniform saturate}
Recalling that $\Delta \notin \RR^n$ is a \emph{death state} and fixing an auxiliary metric space $(M, d_M)$, we begin with a definition.
\begin{definition}
\label{def:cls}
We call a mapping 
\begin{align*}
(t,x,\mu) \mapsto \varphi_t^\mu x : [0, \infty) \times \RR^n \times M \rightarrow \RR^n \cup \{ \Delta \}
\end{align*}
 a \emph{parameterized family of continuous local semigroups on} $\RR^n$ \emph{parameterized by} $M$ if for every $x\in \RR^n$ and $\mu \in M$, there exists a time $t_{x,\mu} \in (0, \infty]$, called the \emph{time of existence}, such that the following conditions are satisfied:
\begin{itemize}
\item[(i)]  For $t\in [0, t_{x,\mu})$, $\varphi_t^\mu x \in \RR^n$ and for $t\geq t_{x,\mu}$, $\varphi_t^\mu x= \Delta $.
\item[(ii)]  $\varphi_0^\mu x =x$ and for all $t,s\in [0, t_{x,\mu})$ with $t+s\in [0, t_{x,\mu})$, we have $t\in [0, t_{\varphi_s^\mu x,\mu})$ and $\varphi_{t+s}^\mu x= \varphi_t^\mu \varphi_s^\mu x$. 
\item[(iii)]  For all $t \in [0, t_{x,\mu})$ and $\epsilon >0$, there exists $\delta >0$ such that for all $(t', x', \mu') \in [0, \infty) \times \RR^n\times M$ with $$|t-t'|+|x-x'|+d_M(\mu,\mu')< \delta$$ we have $t' \in [0, t_{x',\mu'})$ and $$|\varphi_t^\mu x- \varphi_{t'}^{\mu'} x'| < \epsilon.$$ 
\end{itemize}
\end{definition}

Before proceeding further, we first provide an illustrative example that connects Definition~\ref{def:cls} with the examples in the sections above. 
\begin{example}
\label{ex:maincontrol}
Suppose that the system~\eqref{eqn:SDEp} is noise propagating and fix $Q\in \{ P_\text{L}, P_\text{D}\}$.  Consider the vector fields $X_0, X_1, \ldots, X_n$ on $\RR^n$ given by 
\begin{align}
X_0(x)=\Q(x) \qquad \text{ and }\qquad X_j (x) = e_j, \, \, j=1,2,\ldots, n,
\end{align}
where we recall that $e_1, \ldots, e_n$ is the standard orthonormal basis of $\RR^n$.  
Let $M=\RR^n$ with the usual Euclidean distance and recall that for fixed $x, \mu \in \RR^n$: 
 $$t \mapsto \varphi_t(X_0 + \textstyle{\sum_{j=1}^n} \sigma^j \mu^j X_j)x$$ is the maximally-defined (on $[0, \infty)$) solution of the ODE
\begin{align}
\label{eqn:controlextend}
\begin{cases}
\dot{x}_t= \Q(x_t)+ \sigma \mu \\
x_0= x.
\end{cases}
\end{align} 
Since the system is noise propagating and the equation~\eqref{eqn:controlextend} is explicitly integrable, we can set $t_{x,\mu}=\infty$ for all $x,\mu \in \RR^n$ since the time of explosion of~\eqref{eqn:controlextend} is infinite.  One can then check that the mapping $(t,x,\mu) \mapsto \varphi_t(X_0 + \textstyle{\sum_{j=1}^n} \sigma^j \mu^j X_j)x$ is a parameterized family of continuous local semigroups on $\RR^n$ parameterized by $M=\RR^n$.  Furthermore, to connect with the control problems in previous sections, for $t\in (0,1]$ and fixed $x,\mu\in \RR^n$, we note that $$\varphi_t(X_0 + \textstyle{\sum_{j=1}^n \sigma^j \mu^j X_j)}x=\varphi_t(\Q,f_\mu)x$$ where $f_\mu(t)=t\mu$ (see below equation~\eqref{eqn:controlgeneral}).  
\end{example}

\begin{remark}
Let $R\in \text{Poly}(\RR^n)$ be arbitrary and $M=\RR$.  Then the mapping
\begin{align*}
&(t,x, \alpha)\mapsto \varphi_t(\alpha R) x: [0, \infty) \times \RR^n \times \RR\rightarrow \RR^n \cup \{ \Delta \}
\end{align*}  
is a parameterized continuous local semigroup on $\RR^n$ parameterized by $\RR$.  Indeed, if $t_{x, \alpha}>0$ denotes the time of explosion for $t\mapsto \varphi_t(\alpha R)x$, then the conditions of Definition~\ref{def:cls} are met.  Also, in a similar way, the mapping
\begin{align*}
&(t,x) \mapsto \varphi_t(R)x: [0, \infty) \times \RR^n \rightarrow \RR^n \cup \{ \Delta\}
\end{align*}
can be viewed as a parameterized continuous local semigroup parameterized by $M=\RR$ by lifting the above map to 
\begin{align*}
&(t,x,\alpha) \mapsto \varphi_t(R) x: [0, \infty) \times \RR^n \times \RR\rightarrow \RR^n \cup \{ \Delta \}
\end{align*}
and such lift is implicitly assumed below. 
\end{remark}

Let $\mathcal{F}$ denote an arbitrary collection of parameterized continuous local semigroups on $\RR^n$.  A generic element of $\mathcal{F}$ is denoted by $\varphi$ and $\mathcal{P}(\varphi)$ denotes the parameter set of $\varphi$.  We sometimes use the notation $(\varphi, M)\in \mathcal{F}$ to indicate that $\varphi \in \mathcal{F}$ satisfies $\mathcal{P}(\varphi)=M$.  For $x\in \RR^n$ and $t>0$, let $\A_\mathcal{F}(x, t)$ be the set of points $y\in \RR^n$ such that there exist $\varphi^1, \varphi^2 \ldots, \varphi^k \in \mathcal{F}$, corresponding positive times $t_1, t_2, \ldots, t_k >0 $ for which $\sum_{j=1}^k t_j =t$ and parameters $\mu_1 \in \mathcal{P}(\varphi^1), \ldots, \mu_k \in \mathcal{P}(\varphi^k)$ such that   
\begin{align}
\label{eq:pcdt}
y = \varphi_{t_k}^{k, \mu_k} \varphi_{t_{k-1}}^{k-1,\mu_{k-1}} \cdots \varphi_{t_1}^{1, \mu_1} x.  
\end{align} 
Note that $\A_{\mathcal{F}}(x,t)$ is the set of points that can be accessed using the trajectories in $\mathcal{F}$ at time $t>0$ starting from $x\in \RR^n$.  Next, define 
\begin{align}
\A_\mathcal{F}(x, \leq t) = \bigcup_{0<s\leq t} \A_\mathcal{F}(x, s),
\end{align}
which is the set of points that can be accessed using the trajectories in $\mathcal{F}$ by time $t$ starting at $x$.  We call $\mathcal{F}$ \emph{approximately controllable} if for every $x\in \RR^n$ and $t>0$
\begin{align*}
\text{cl}(\A_\mathcal{F}(x,t))=\RR^n.
\end{align*}
The set $\mathcal{F}$ is called \emph{exactly controllable} if for every $x\in \RR^n$ and $t>0$
\begin{align*}
\A_\mathcal{F}(x,t)=\RR^n.
\end{align*}
\begin{remark}
It is possible to discuss control trajectories without the use of parameter sets.  However, parameter sets are useful as they allow one to uncover points in the sets $\A_\mathcal{F}(x,t)$, $x\in \RR^n$ and $t>0$, given some knowledge of points in their closures $\text{cl}(\A_\mathcal{F}(x,t))$, $x\in \RR^n$ and $t>0$. 
\end{remark}

Let $\mathcal{F}$ and $\mathcal{G}$ be two collections of parameterized continuous local semigroups on $\RR^n$. We say that $\mathcal{F}$ \emph{uniformly subsumes} $\mathcal{G}$, denoted by $\mathcal{G}\preccurlyeq_u \mathcal{F}$, if for any $\psi \in \mathcal{G}$, $t, \epsilon >0$ and any compact sets $D_1\subset \RR^n$, $D_2\subset \mathcal{P}(\psi)$ there exists $\varphi^1, \ldots, \varphi^k\in \mathcal{F}$, times $t_1, t_2, \ldots, t_k >0$ with $\sum t_j \leq t$ and continuous functions $f_k: D_2\rightarrow \mathcal{P}(\varphi^k)$ such that 
\begin{align*}
\sup_{x\in D_1, \mu \in D_2} \Big| \psi^\mu_t x - \varphi^{k, f_k(\mu)}_{t_k} \cdots \varphi^{1, f_1(\mu)}_{t_1} x \Big|<\epsilon. 
\end{align*}
 We say that $\mathcal{F}$ and $\mathcal{G}$ are \emph{uniformly equivalent}, denoted by $\mathcal{F}\sim_u \mathcal{G}$, if $\mathcal{G}\preccurlyeq_u \mathcal{F}$ and $\mathcal{F}\preccurlyeq_u \mathcal{G}$.  The \emph{uniform saturate} of $\mathcal{F}$, denoted by $\text{sat}_u(\mathcal{F})$, is defined by  
\begin{align}
\label{eqn:sateF}
\sat_u(\mathcal{F}) = \bigcup_{\mathcal{G} \preccurlyeq_u \mathcal{F}} \mathcal{G}. 
\end{align} 
One can show that (cf. \cite[Lemma 3.21]{GHHM_18}): $$\sat_u(\mathcal{F})\sim_u \mathcal{F}.$$  

The uniform saturate allows one to expand an initial collection $\mathcal{F}$ of trajectories to include simpler ones, from which control properties of $\mathcal{F}$ are more easily deduced.  Of course, by using this approach, one only gains access to sets of the form $\cl(\A_\mathcal{F}(x, \leq t))$, $x\in \RR^n$, $t>0$.  However, sometimes this information can be transferred back to the closures of exact time sets $\cl(\A_\mathcal{F}(x,t))$, $x\in \RR^n$, $t>0$.  This is the content of the following result, which is proved in~\cite[Lemma 3.13]{GHHM_18}.

\begin{lemma}[Conversion Lemma]
\label{lem:conversion}
Suppose $U \subset  \RR^n$ is open with the property that 
\begin{align*}
U \subset  \text{\emph{cl}}(\A_{\mathcal{F}}(x,\leq t))
\end{align*}
for all $x\in U$ and $t >0$.  Then 
\begin{align*}
U\subset  \text{\emph{cl}}(\A_{\mathcal{F}}(x,t))
\end{align*}  
for all $x\in U$ and $t >0$.  In particular, if $U=\RR^n$, then $\mathcal{F}$ is approximately controllable. 
\end{lemma}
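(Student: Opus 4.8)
The plan is to fix $x,y\in U$, $t>0$, and $\epsilon>0$, and to build a single finite concatenation of trajectories from $\mathcal{F}$ whose total elapsed time is \emph{exactly} $t$, that starts at $x$ and ends within $\epsilon$ of $y$; since $x,y\in U$, $t>0$, and $\epsilon>0$ are arbitrary this yields $U\subset\cl(\A_{\mathcal{F}}(x,t))$ for all $x\in U$ and all $t>0$. (We may assume $U\neq\emptyset$; then the hypothesis forces $\A_{\mathcal{F}}(z,\leq s)\neq\emptyset$, so $\mathcal{F}\neq\emptyset$ and some $\varphi\in\mathcal{F}$ has a parameter $\mu\in\mathcal{P}(\varphi)$.) First I would pick $\rho\in(0,\epsilon/4)$ with $\overline{B_\rho(y)}\subset U$, fix such a $\varphi$ and $\mu$, and then---invoking condition~(iii) of Definition~\ref{def:cls}, together with lower semicontinuity and positivity of the times of existence on the compact set $\overline{B_\rho(y)}$---choose $\delta>0$ small enough that $\varphi^\mu_\eta z$ is defined and $|\varphi^\mu_\eta z-z|<\rho/4$ for every $z\in\overline{B_\rho(y)}$ and every $\eta\in[0,\delta]$. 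This short flow of duration $\leq\delta$ is the device that will absorb small prescribed amounts of time without leaving $B_\rho(y)$.

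Next I would carry out the construction in three stages. \textbf{First leg:} applying the hypothesis with budget $t/2$ at $x$ gives a finite $\mathcal{F}$-concatenation from $x$ to a point $x_1\in B_{\rho/2}(y)$ in some time $\tau_1\leq t/2$; set $r:=t-\tau_1\geq t/2>0$. \textbf{Unit loops:} from any $z\in B_\rho(y)\cap U$, apply the hypothesis (at precision $\rho/4$) with budget $\delta$ to reach $z'\in B_{\rho/4}(y)$ in some time $s\leq\delta$, then append $\varphi^\mu$ for the remaining time $\delta-s$; by the choice of $\delta$ the endpoint lies in $B_{\rho/2}(y)\subset B_\rho(y)\cap U$, so this is a finite $\mathcal{F}$-concatenation of total time exactly $\delta$ that maps $B_\rho(y)\cap U$ into itself, and, crucially, each loop returns into $B_{\rho/2}(y)$ on its own, so no error accumulates over many loops. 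Concatenating $N:=\lfloor r/\delta\rfloor$ unit loops from $x_1$ reaches some $x_{N+1}\in B_{\rho/2}(y)$ in time $\tau_1+N\delta$, leaving a residual $r':=r-N\delta\in[0,\delta)$. \textbf{Final burn:} append one more copy of $\varphi^\mu$, of duration $r'<\delta$, from $x_{N+1}$; its endpoint lies within $\rho<\epsilon$ of $y$, and the total time is $\tau_1+N\delta+r'=\tau_1+r=t$. The first leg, the $N$ unit loops, and the final burn together form a finite list of elements of $\mathcal{F}$ with total time exactly $t$ whose trajectory ends within $\epsilon$ of $y$, so $y\in\cl(\A_{\mathcal{F}}(x,t))$, which proves the main claim. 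The final sentence of the lemma is then immediate: if $U=\RR^n$ then $\cl(\A_{\mathcal{F}}(x,t))\supset U=\RR^n$ for all $x\in\RR^n$ and all $t>0$, i.e.\ $\mathcal{F}$ is approximately controllable.

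I expect the main obstacle to be precisely the passage from \emph{reachability in some time $\leq t'$}---all that the hypothesis supplies---to \emph{reachability in time exactly $t$}: a naive iteration of ``reach within budget'' steps need never add up to $t$, since each step may consume arbitrarily little time. The padding mechanism above is what resolves this, by lengthening every short hypothesis-path to the fixed duration $\delta$ with a short flow and then burning the sub-$\delta$ residual with one more short flow; its correctness rests only on choosing $\delta$ small enough that these $\varphi^\mu$-flows displace points of $\overline{B_\rho(y)}$ by less than $\rho/4$, uniformly over that compact neighborhood, which is exactly what the uniform-continuity condition~(iii) of Definition~\ref{def:cls} guarantees. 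A minor point to check along the way is that the trajectory stays in $U$: this is automatic since only the endpoints of the concatenated pieces need to lie in $U$, and each unit loop and the final flow return that endpoint to $B_\rho(y)\subset U$ by construction.
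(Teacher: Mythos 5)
Your argument is correct. Note first that the paper does not prove this lemma in-text: it simply cites \cite{GHHM_18} (Lemma 3.13 there), so you have in effect supplied the missing self-contained proof, and your padding mechanism is exactly the right idea for converting ``reachable within time budget'' into ``reachable at an exact time'': fix one semigroup $\varphi^\mu$, use property (iii) of Definition~\ref{def:cls} to make its small-time displacement uniformly small on a compact neighborhood of the target, stretch each application of the hypothesis to a fixed duration $\delta$, and burn the sub-$\delta$ residual with one last short flow. Two small points you should tighten, both trivially fixable. First, the uniform statement ``$\varphi^\mu_\eta z$ is defined and $|\varphi^\mu_\eta z - z|<\rho/4$ for all $z\in\overline{B_\rho(y)}$, $\eta\in[0,\delta]$'' does not come verbatim from (iii), which is pointwise in the base point; you need the routine finite-subcover argument (apply (iii) at $t=0$ with a smaller tolerance at finitely many centers, then use the triangle inequality), which is what your phrase about compactness is gesturing at and should be spelled out. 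Second, the definition of $\A_{\mathcal{F}}(x,t)$ via \eqref{eq:pcdt} requires strictly positive times $t_j$, so in the degenerate cases $s=\delta$ (inside a unit loop) or $r'=0$ (final burn) you must simply omit the zero-duration flow rather than append it; the time accounting and the error estimates are unaffected. With those remarks the proof is complete, and the final sentence of the lemma follows exactly as you state.
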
 

Next, once one characterizes points in the closed sets
\begin{align*} 
\text{cl}(\A_{\mathcal{F}}(x,t)), \,\,\, x\in \RR^n, \,\,\, t>0,
\end{align*}
it is natural to wonder which points in $\text{cl}(\A_{\mathcal{F}}(x,t))$ belong to $\A_{\mathcal{F}}(x,t)$.  This is the reason for the parameter set and the use of \emph{uniform} saturation above.  In essence, the uniformity in the above construction allows us to ``wiggle" the control slightly to go from approximate to exact controllability.  See~\cite[Section 3]{GHHM_18} for further details, which are partially summarized in Corollary~\ref{cor:exactcont} below. 

Given any nonempty $M\subset \RR^n$, we let $(\rho, M)$ denote the \emph{ray semigroup parameterized by} $M$; that is, for any $x\in \RR^n$, $t>0$ and $\mu \in M$:
\begin{align}
\label{eqn:raysemi}
\rho_t^\mu x := x+ t \mu.  
\end{align}  
As a special case of~\cite[Corollary 3.35]{GHHM_18}, we have:
\begin{corollary}
\label{cor:exactcont}
Let $\{ b_1, \ldots, b_n \}$ be any basis of $\RR^n$ and suppose $(\rho, \text{\emph{span}}(b_j)) \in \text{\emph{sat}}_u(\mathcal{F})$ for all $j$.  Then $\mathcal{F}$ is exactly controllable on $\RR^n$. 
\end{corollary}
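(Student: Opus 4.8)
The plan is to reduce the claim to exact controllability of the elementary family $\mathcal{G}:=\{(\rho,\text{span}(b_1)),\dots,(\rho,\text{span}(b_n))\}$ of ray semigroups along the basis directions, and then transfer that property to $\mathcal{F}$ through the uniform saturate.

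First I would check that $\mathcal{G}$ itself is exactly controllable. Given $x,y\in\RR^n$ and $t>0$, write $y-x=\sum_{j=1}^n s_jb_j$ with $s_j\in\RR$ and set $t_j:=t/n$, $\mu_j:=(ns_j/t)\,b_j\in\text{span}(b_j)$; then $\sum_j t_j=t$ and, applying \eqref{eqn:raysemi} repeatedly,
\[
\rho^{\mu_n}_{t_n}\rho^{\mu_{n-1}}_{t_{n-1}}\cdots\rho^{\mu_1}_{t_1}x \;=\; x+\sum_{j=1}^n t_j\mu_j \;=\; x+\sum_{j=1}^n s_jb_j \;=\; y,
\]
so $\A_\mathcal{G}(x,t)=\RR^n$ for every $x$ and $t>0$. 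Next I would observe $\mathcal{G}\preccurlyeq_u\mathcal{F}$: by hypothesis each $(\rho,\text{span}(b_j))$ lies in $\sat_u(\mathcal{F})$, and since $\sat_u(\mathcal{F})\sim_u\mathcal{F}$, every member of $\sat_u(\mathcal{F})$ — in particular every element of $\mathcal{G}$ — is uniformly subsumed by $\mathcal{F}$. Concatenating the resulting approximations stage by stage, propagating errors through the joint continuity property (iii) of Definition~\ref{def:cls}, gives $\A_\mathcal{G}(x,\le t)\subset\cl(\A_\mathcal{F}(x,\le t))$, hence $\cl(\A_\mathcal{F}(x,\le t))=\RR^n$ for all $x$ and $t>0$. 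Taking $U=\RR^n$ in the Conversion Lemma (Lemma~\ref{lem:conversion}) then yields $\cl(\A_\mathcal{F}(x,t))=\RR^n$ for all $x,t$; that is, $\mathcal{F}$ is at least \emph{approximately} controllable.

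The hard part is upgrading this to \emph{exact} controllability, and here the parameter sets and the uniformity in $\preccurlyeq_u$ are essential. Fix a target $y$ and let $s^\ast$ be the unique parameter with $\Psi(s^\ast):=x+\sum_j s^\ast_j b_j=y$; choose a closed box $D$ containing $s^\ast$ in its interior, on which the affine injection $\Psi$ is a homeomorphism onto its image. Applying uniform subsumption on each of the $n$ ray stages, each with its own \emph{continuous} reparametrization of $D$ into the relevant parameter space, produces a continuous family of $\mathcal{F}$-trajectories whose endpoint map $\widetilde\Psi:D\to\RR^n$ is uniformly as close to $\Psi|_D$ as we wish, with total running time $\le t$. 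Since $\Psi$ is injective and affine, $y\notin\Psi(\partial D)$; if the approximation is fine enough the same holds for $\widetilde\Psi$, and a topological degree (invariance-of-domain) argument forces $y\in\widetilde\Psi(\mathrm{int}\, D)$, so $y$ is hit \emph{exactly} by an $\mathcal{F}$-trajectory of total time $\le t$. Any slack between that time and $t$ is absorbed by prepending a zero-parameter ray segment $\rho^0_\delta=\mathrm{id}$, again subsumed by $\mathcal{F}$, so $y\in\A_\mathcal{F}(x,t)$. Since $x,y,t$ were arbitrary, $\mathcal{F}$ is exactly controllable — this is precisely the content of \cite[Corollary 3.35]{GHHM_18}, whose hypothesis (a spanning set of ray semigroups lying in $\sat_u(\mathcal{F})$) is exactly what we assume, so in practice one verifies that hypothesis and invokes the cited result.

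I expect the main obstacle to be the approximate-to-exact passage: choosing the $\mathcal{F}$-approximations to depend continuously on the parameter so that the degree argument applies, while simultaneously keeping the total time controlled and then exactly equal to $t$. The reduction to $\mathcal{G}$ and the verification $\mathcal{G}\preccurlyeq_u\mathcal{F}$ are routine consequences of the definitions and of $\sat_u(\mathcal{F})\sim_u\mathcal{F}$.
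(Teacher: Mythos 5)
Your proposal is correct and takes essentially the same route as the paper: the paper gives no independent argument, stating the corollary as a special case of \cite[Corollary 3.35]{GHHM_18}, which is exactly what you do after verifying the hypothesis, and your reduction to the ray family plus the uniform-subsumption/degree sketch is consistent with how that cited result is proved. The one glossed point in your supplementary sketch is the exact-time step: a zero-parameter ray segment lies only in $\sat_u(\mathcal{F})$, not in $\mathcal{F}$ itself, so prepending it still yields approximating $\mathcal{F}$-compositions of total duration $\leq t$ rather than $=t$, and absorbing this slack is precisely part of what the cited argument in \cite{GHHM_18} handles.
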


Using Corollary~\ref{cor:exactcont}, the goal is to use scaling arguments to produce controllability results for $\mathcal{F}$. 

\subsection{Saturating trajectories}
In this section, we focus on a more concrete setting to maintain simplicity of discussion.  Fix $V_0 \in \text{Poly}(\RR^n)$ and $V_1, V_2, \ldots, V_k \in \text{Cons}(\RR^n)$, $k \leq n$, and for $(t,x, \mu) \in [0, \infty) \times \RR^n \times \RR^n$ let   
\begin{align}
\label{eqn:Fgen}
\varphi^\mu_t x=\varphi_t(V_0+\textstyle{\sum_{j=1}^k} \mu_j V_j) x.\end{align}
Note that $\varphi$ is a parameterized family of continuous local semigroups on $\RR^n$ parameterized by $M=\RR^k$ with the usual Euclidean distance.  We set 
\begin{align}
\label{eqn:Fdef}
\mathcal{F}(V_0; V_1, \ldots, V_k)= \{ \varphi\}. 
\end{align}

\begin{example}
\label{ex:comparetrajectories}
Recalling the setting of Example~\ref{ex:maincontrol}, consider $ \mathcal{F}(X_0; X_1, X_2, \ldots, X_n)$ and observe that a trajectory of the form~\eqref{eq:pcdt} corresponds to choosing $f\in \HH$ with piecewise constant derivative $\dot{f}$.  Consequently, 
\begin{align*}
\A_\mathcal{F}(x, t) \subset \A_t^2(x)\end{align*}
 for all $x\in \RR^n$ and $t\in (0,1] $.  Furthermore, if for $x\in \RR^n$ and $t\in (0,1]$ we define the set 
 \begin{align}
 \A^2_{\leq t}(x) = \{ y\in \RR^n \, : \, \varphi_s(x, f) =y \text{ for some }f\in \HH\text{ and } s\leq t\},
 \end{align}
 we also have that 
 \begin{align*}
 \A_\mathcal{F}(x,\leq t)\subset \A_{\leq t}^2(x).\end{align*}
 \end{example}
Our goal is to saturate the trajectories in $\mathcal{F}(V_0; V_1, \ldots, V_k)$ using two convenient scalings.  The first scaling utilizes the dynamics in~\eqref{eqn:Fgen} with large controls on small time intervals. 

\begin{proposition}
\label{prop:simplescaling}
Suppose that $D_1\subset \RR^n$ and $D_2\subset \RR$ are compact and $j\in \{1,2,\ldots, k \}$.  Then:
\begin{itemize}
\item[(i)] For any $t>0$, there exist $\lambda_*=\lambda_*(t)>0$ and $C_1(t)>0$ finite such that $\lambda \geq \lambda_*$  implies
\begin{align*}
\sup_{(s,x,\alpha ) \in [0,t/\lambda]\times D_1\times D_2} | \varphi_s(V_0+\lambda \alpha V_j) x| \leq C_1(t).  
\end{align*}
\item[(ii)] Let $v_j\in \RR^n$ be the constant value of $V_j\in \text{\emph{Cons}}(\RR^n)$.  For any $t>0$, there exists a constant $C_2(t)>0$ such that 
\begin{align*}
\sup_{(s, x, \alpha) \in [0, t]\times D_1 \times D_2} |\varphi_{s/\lambda}(V_0+\lambda \alpha V_j ) x - \rho_s^{\alpha v_j} x|\leq \frac{C_2(t)}{\lambda}
\end{align*}
for all $\lambda \geq \lambda_*$.  In particular, $(\rho, \text{\emph{span}}(v_j))\in  \sat_u(\mathcal{F}(V_0; V_1, \ldots, V_k))$.  
\end{itemize}
\end{proposition}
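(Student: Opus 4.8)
The plan is to analyze the ODE $\dot{x}_s = V_0(x_s) + \lambda\alpha v_j$ by passing to the fast time variable $u = \lambda s$ so that the perturbation term becomes $O(1/\lambda)$, and then to establish parts (i) and (ii) together by a Gr\"onwall / continuation argument. First I would rescale time: for $z_u := \varphi_{u/\lambda}(V_0 + \lambda\alpha V_j)x$, the chain rule gives
\begin{align}
\label{eqn:zeqn}
\dot{z}_u = \tfrac{1}{\lambda} V_0(z_u) + \alpha v_j, \qquad z_0 = x.
\end{align}
Since $v_j$ is the constant value of $V_j$, the ``target'' trajectory is $\rho_u^{\alpha v_j}x = x + u\alpha v_j$, which for $u\in[0,t]$, $x\in D_1$, $\alpha\in D_2$ stays in a fixed compact set $D_1' := \{x + u\alpha v_j : x\in D_1, u\in[0,t], \alpha\in D_2\}$. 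Let $C_1(t)$ be chosen so that the closed $1$-neighborhood of $D_1'$, call it $\overline{D}$, lies in $B_{C_1(t)}(0)$; let $L = \sup_{y\in\overline{D}}|V_0(y)|$, which is finite since $V_0$ is a polynomial (hence continuous) and $\overline{D}$ is compact.

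For part (i), I would argue by a continuation/bootstrap estimate in the $u$-variable. Writing $z_u - (x + u\alpha v_j) = \tfrac{1}{\lambda}\int_0^u V_0(z_r)\,dr$, as long as $z_r\in\overline{D}$ for $r\le u$ we get $|z_u - (x+u\alpha v_j)| \le \tfrac{Lu}{\lambda} \le \tfrac{Lt}{\lambda}$. Taking $\lambda_*(t) := Lt$ (or any larger value) forces this deviation to be at most $1$, so $z_u\in\overline{D}$ cannot reach the boundary of $\overline{D}$ on $[0,t]$; a standard continuation argument then shows $z_u\in\overline{D}\subset B_{C_1(t)}(0)$ for all $u\in[0,t]$, all $x\in D_1$, $\alpha\in D_2$, $\lambda\ge\lambda_*$. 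Translating back via $s = u/\lambda$ and noting $[0,t/\lambda]$ corresponds to $u\in[0,t]$, this is exactly the bound in (i). Part (ii) is then immediate from the same inequality: $\sup_{(s,x,\alpha)\in[0,t]\times D_1\times D_2}|\varphi_{s/\lambda}(V_0+\lambda\alpha V_j)x - \rho_s^{\alpha v_j}x| = \sup|z_s - (x + s\alpha v_j)| \le Lt/\lambda$, so we may take $C_2(t) := Lt$.

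For the final claim, that $(\rho, \text{span}(v_j))\in\sat_u(\mathcal{F}(V_0;V_1,\ldots,V_k))$, I would unwind the definition of uniform subsumption: given $t,\epsilon>0$ and compact $D_1\subset\RR^n$, $D_2\subset\text{span}(v_j)$, I need finitely many semigroups from $\mathcal{F}$, times summing to at most $t$, and continuous parameter maps, approximating $\rho_t^\mu x$ uniformly to within $\epsilon$. Parametrizing $D_2$ as $\{\alpha v_j : \alpha\in \tilde D_2\}$ for a compact $\tilde D_2\subset\RR$, I would use the single semigroup $\varphi_\cdot^{\,\cdot}$ with the one-step choice: time $t_1 = t/\lambda$ and parameter $\mu_1 = \lambda\alpha e_j$ (continuous in $\alpha$), for $\lambda$ large. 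By part (ii) with the compact set $\tilde D_2$ in place of $D_2$, the error is at most $C_2(t)/\lambda < \epsilon$ once $\lambda > \max(\lambda_*(t), C_2(t)/\epsilon)$, and the total time $t/\lambda \le t$. Hence $(\rho, \text{span}(v_j))\preccurlyeq_u \mathcal{F}(V_0;V_1,\ldots,V_k)$, which gives membership in the uniform saturate by \eqref{eqn:sateF}.

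The main obstacle is purely the continuation argument making part (i) rigorous: one must ensure the solution does not escape the compact set $\overline{D}$ before time $t/\lambda$, which requires choosing $\lambda_*$ in terms of $L$ (the sup of $|V_0|$ on the neighborhood) before one knows the solution stays there — the usual ``open-and-closed'' bootstrap. Everything else is a direct Gr\"onwall-free estimate since the perturbation enters additively and $v_j$ is constant; no Lipschitz constant for $V_0$ is even needed for the stated bounds, only its sup-norm on a compact set. One minor care point: the definition of $\preccurlyeq_u$ requires the approximating maps $f_k$ to be continuous on $D_2$, which holds trivially here since $\alpha\mapsto\lambda\alpha e_j$ is linear.
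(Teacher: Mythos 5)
Your proof is correct, and for part (i) it takes a genuinely different (and somewhat leaner) route than the paper. The paper establishes (i) first by an energy estimate: writing $u^\lambda_s=\varphi_s(V_0+\lambda\alpha V_j)x$, it differentiates $\tfrac12|u^\lambda_s|^2$, bounds $\tfrac1\lambda V_0(u^\lambda_s)\cdot u^\lambda_s$ on the event $\{|u^\lambda_s|\le k\}$ via a stopping time $T^\lambda_k$, applies Young's inequality to $\alpha v_j\cdot u^\lambda_s$ and Gr\"onwall, and then chooses first $k$ and then $\lambda_*$ so that $T^\lambda_k\ge t$; part (ii) is then deduced from (i) by the same integral identity you use, $\varphi_{s/\lambda}(V_0+\lambda\alpha V_j)x-\rho^{\alpha v_j}_s x=\int_0^{s/\lambda}V_0(\varphi_u(V_0+\lambda\alpha V_j)x)\,du$. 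You reverse the logic: you prove the deviation bound from the ray directly by a bootstrap on a fixed compact neighborhood $\overline{D}$ of the ray's range, which yields (ii) with $C_2(t)=Lt$ and gives (i) as a byproduct, with no Gr\"onwall and no Lipschitz constant for $V_0$ — only its sup-norm on $\overline{D}$. This is a clean unification; the only cosmetic care point is the borderline choice $\lambda_*=Lt$, where the deviation bound is exactly $1$ at $u=t$; either take $\lambda_*=2Lt$, or note (as your argument implicitly does) that for $u<t$ the deviation is strictly less than $1$, so the first-exit contradiction still goes through and $z_t\in\overline{D}$ follows by continuity. Your explicit verification of the ``in particular'' claim — one step of $\varphi$ with time $t/\lambda\le t$ and the continuous parameter map $\mu=\alpha v_j\mapsto\lambda\alpha e_j$, with uniformity over $D_1\times D_2$ supplied by (ii) — is exactly what the paper leaves implicit, and it correctly handles the continuity requirement in the definition of $\preccurlyeq_u$.
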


\begin{proof}
To prove part (i), fix $t >0$, $r>0$ and $\alpha_*>0$ such that $|x|\leq r$ for $x\in D_1$ and $|\alpha| \leq \alpha_*$ for $ \alpha \in D_2$.  For $(x, \alpha) \in D_1 \times D_2$, define $u_t^\lambda = \varphi_{t/\lambda}(V_0+\lambda \alpha V_j)x$. Note that 
\begin{align}
\begin{cases}
\frac{1}{2}\frac{d}{dt} |u_t^\lambda|^2 = \frac{1}{\lambda} V_0( u_t^\lambda)\cdot u_t^\lambda + \alpha v_j \cdot u_t^\lambda\\
u_0^\lambda=x.
\end{cases}
\end{align}
In particular, if $T_k^\lambda= \inf\{ t\geq 0 \, : |u_t^\lambda| \geq k\}$, for all $t< T_k^\lambda$ there exists a constant $C_k >0$ depending only on $k$ such that
\begin{align*}
\frac{1}{2}\frac{d}{dt}| u_t^\lambda|^2 \leq \frac{C_k}{\lambda} + |\alpha v_j| | u_t^\lambda|\leq \frac{C_k}{\lambda}+ \frac{|u_t^\lambda|^2}{2}+ \frac{\alpha^2|v_j|^2}{2}\leq \frac{C_k}{\lambda}+ \frac{|u_t^\lambda|^2}{2}+ \frac{\alpha_*^2|v_j|^2}{2}.\end{align*}  
Gr\"{o}nwall's inequality then implies that 
\begin{align*}
\sup_{s\in [0,T_k^\lambda \wedge t]} |u_s^\lambda|^2 \leq \Big(| x|^2 + \frac{2 C_k t}{\lambda}+ \alpha_*^2 |v_j|^2 t\Big) e^{t}\leq \Big(r^2+ \frac{2C_k t}{\lambda}+ \alpha_*^2 |v_j|^2 t\Big) e^{t}.\end{align*}
By first selecting $k=k(t)>0 $ large enough and then $\lambda_*=\lambda_*(t)>0$ large enough, we have the following bound satisfied for all $\lambda \geq \lambda_*$, $T_k^\lambda \geq t$:
\begin{align*}
\sup_{(s,x,\alpha)\in [0, t]\times D_1 \times D_2} |u_s^\lambda|^2 \leq \Big(r^2 + 1+ \alpha_*^2 |v_j|^2 t\Big) e^{t}< \infty.
\end{align*}
This proves part (i).

In order to prove part (ii), note that by using (i), for $\lambda \geq \lambda_*(t,x,j, \alpha)$ there exists a constant $C_2(t)>0$ such that 
\begin{align}
&\sup_{(s, x, \alpha) \in [0, t]\times D_1 \times D_2} |\varphi_{s/\lambda}(V_0+\lambda \alpha V_j ) x - \rho_s^{\alpha v_j } x|\\
\nonumber &=\sup_{(s, x, \alpha) \in [0, t]\times D_1 \times D_2}\bigg| \int_0^{s/\lambda} V_0(\varphi_u(V_0+\lambda \alpha V_j ) x) \, du\bigg|\leq \frac{C_2(t)}{\lambda}.  
\end{align}
This finishes the proof. 
\end{proof}  

To uncover even more trajectories in $\sat_u(\mathcal{F}(V_0; V_1, \ldots, V_k))$, fix $R\in \text{Poly}(\RR^n)$ and $V\in \text{Cons}(\RR^n)$ with $V(x)=v\in \RR^n$ for all $x\in \RR^n$.  For fixed $x\in \RR^n$, $\lambda \mapsto R(x+ \lambda v):\RR\rightarrow \RR^n$
is a vector of polynomials in $\lambda$.  Let $\deg(x, R, V)$ be the maximal degree among these polynomials and 
\begin{align}
\deg(R,V)= \max_{x\in \RR^n} \deg(x,R,V). 
\end{align}
The quantity $\deg(R,V)$ is called the \emph{relative degree} of $R$ and $V$.  Observe that Taylor's formula gives
\begin{align}
\label{eqn:reldegcomp}
R(x+\lambda v) =\sum_{|\eta| \leq \deg(R,V)} \frac{D^{\eta} R(x)}{\eta!} \lambda^{|\eta|} v^\eta,  
\end{align}
where we used multi-index notation in~\eqref{eqn:reldegcomp}.  Hence as $\lambda \rightarrow \infty$
\begin{align}
\label{eqn:bracketlim}
\frac{R(x+\lambda v)}{\lambda^{\deg(R,V)}} \longrightarrow \sum_{|\alpha| = \deg(R,V)}\frac{ D^\alpha R(x)}{\alpha!} v^\alpha 
\end{align}
where the convergence is uniform (in $x\in \RR^n$) on compact subsets of $\RR^n$.  

Next, observe that the quantity~\eqref{eqn:bracketlim} on the righthand side above can be expressed as a Lie bracket of vector fields.  Indeed, defining $\text{ad}^k V(R)$ inductively by $\text{ad}^0 V(R)=R$ and $\text{ad}^j V(R)= [V, \text{ad}^{j-1} V(R)]$ for $j\geq 0$, we find that 
\begin{align}
\label{eqn:bracketad}
 \sum_{|\alpha| = \deg(R,V)}\frac{ D^\alpha R(x)}{\alpha!} v^\alpha =  \frac{\text{ad}^{\deg(R,V)} V(R)}{|\deg(R,V)|!}(x)=: \Br(V,R)(x).\end{align}

\begin{example}
As an illustrative example, consider $R\in \text{Poly}(\RR^3)$ and $V\in \text{Cons}(\RR^3)$ given by 
\begin{align*}
R(x^1, x^2, x^3)= (x^1x^2, x^2 x^3, (x^3)^2) \qquad \text{ and } \qquad V(x^1, x^2, x^3)=(0,1,0).  
\end{align*}
Then $\deg (R,V) = 1$ since 
\begin{align*}
R(x^1, x^2+ \lambda, x^3) = (x^1x^2 + \lambda x^1, x^2 x^3+ \lambda x^3, (x^3)^2). 
\end{align*}
Furthermore, 
\begin{align*}
\text{Br}( V, R)(x^1, x^2, x^3)=\text{ad}^{\deg(R,V)}V (R)(x^1, x^2, x^3)= [V,R](x^1, x^2, x^3)= (x^1, x^3, 0). 
\end{align*}
\end{example}

The next goal is to see how $\text{Br}(V,R)$ arises using the relevant composition of trajectories.

\begin{proposition}
\label{prop:bracket}
Let $R\in \Poly(\RR^n)$ and $V\in \Cons(\RR^n)$ with constant value $v\in \RR^n$.  Let $D_1\subset \RR^n$ and $D_2\subset \RR$ be compact, and fix $t>0$ such that 
$\varphi_t(\alpha^{\deg(R,V)}  \text{\emph{Br}}(V,R)) x\in \RR^n$ for all $(x,\alpha) \in D_1 \times D_2$.  Then 
\begin{align*}
\sup_{(s,x,\alpha)\in [0,t]\times D_1 \times D_2 } | \rho_{1/\lambda}^{-\lambda^2 \alpha v} \varphi_{\frac{s}{\lambda^{\deg(V,R)}}} (R) \rho_{1/\lambda}^{\lambda^2 \alpha v} x -\varphi_s(\alpha^{\deg(V,R)} \text{\emph{Br}}(V,R))x | \rightarrow 0
\end{align*}     
as $\lambda \rightarrow \infty$.  In particular, if 
\begin{align*}
(t, x) \mapsto \varphi_t(R) \in \text{\emph{sat}}_u(\mathcal{F}(V_0; V_1, \ldots, V_k))\qquad \text{ and } \qquad (\rho, \text{\emph{span}}(v)) \in \text{\emph{sat}}_u(\mathcal{F}(V_0; V_1, \ldots, V_k)),
\end{align*}
then $(t, x, \alpha) \mapsto \varphi_t(\alpha^{\text{\emph{deg}}(R,V)} \text{\emph{Br}}(V,R))x \in\text{\emph{sat}}_u(\mathcal{F}(V_0; V_1, \ldots, V_k))$.   
\end{proposition}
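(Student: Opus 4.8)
The plan is to collapse the conjugated composition appearing on the left‑hand side into the flow of a single (autonomous) vector field $g_\lambda$ via a change of variables, to show that $g_\lambda$ converges to $\alpha^{\deg(R,V)}\Br(V,R)$ using the Taylor expansion~\eqref{eqn:reldegcomp}--\eqref{eqn:bracketad} already recorded above, and then to pass to the limit with a Gr\"onwall estimate together with a continuation argument. First I would fix $(x,\alpha)\in D_1\times D_2$, write $d=\deg(R,V)$, and note that $\rho_{1/\lambda}^{\lambda^2\alpha v}x=x+\lambda\alpha v$. Setting $z(\tau)=\varphi_\tau(R)(x+\lambda\alpha v)$ and $y_\lambda(s)=z(s/\lambda^{d})-\lambda\alpha v$, I would check that $y_\lambda(s)$ is exactly $\rho_{1/\lambda}^{-\lambda^2\alpha v}\varphi_{s/\lambda^{d}}(R)\rho_{1/\lambda}^{\lambda^2\alpha v}x$ and that $y_\lambda$ solves
\begin{align*}
\dot y_\lambda(s)=\frac{1}{\lambda^{d}}\,R\big(y_\lambda(s)+\lambda\alpha v\big)=:g_\lambda\big(y_\lambda(s)\big),\qquad y_\lambda(0)=x.
\end{align*}

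Expanding $g_\lambda$ by Taylor's formula as in~\eqref{eqn:reldegcomp} (with $\lambda v$ replaced by $\lambda\alpha v$) gives $g_\lambda(y)=\sum_{|\eta|\le d}\frac{D^\eta R(y)}{\eta!}\lambda^{|\eta|-d}\alpha^{|\eta|}v^\eta$. The lower‑order terms ($|\eta|<d$) are $O(1/\lambda)$ uniformly for $y$ in any fixed ball and $\alpha\in D_2$, since $D^\eta R$ is polynomial and $D_2$ is bounded, while the top‑order term equals $\alpha^{d}\Br(V,R)(y)$ by~\eqref{eqn:bracketad}. Hence $g_\lambda\to\alpha^{d}\Br(V,R)$ as $\lambda\to\infty$, uniformly on compact subsets of $\RR^n$ and uniformly over $\alpha\in D_2$. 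Next I would compare $y_\lambda$ with $w(s)=\varphi_s(\alpha^{d}\Br(V,R))x$: by hypothesis $w$ exists and is $\RR^n$‑valued on $[0,t]$ for all $(x,\alpha)\in D_1\times D_2$, so by compactness and continuity its range lies in a ball $B_\rho(0)$, and the polynomial vector field $\alpha^{d}\Br(V,R)$ is Lipschitz on $\overline{B_{2\rho}(0)}$ with some constant $L$ uniform over $\alpha\in D_2$. Subtracting the integral forms of the two ODEs and splitting $g_\lambda(y_\lambda)-\alpha^{d}\Br(V,R)(w)$ as $\big(g_\lambda(y_\lambda)-\alpha^{d}\Br(V,R)(y_\lambda)\big)+\big(\alpha^{d}\Br(V,R)(y_\lambda)-\alpha^{d}\Br(V,R)(w)\big)$, Gr\"onwall's inequality yields $|y_\lambda(s)-w(s)|\le \delta_\lambda e^{Lt}$ with $\delta_\lambda:=t\sup_{|y|\le 2\rho,\ \alpha\in D_2}|g_\lambda(y)-\alpha^{d}\Br(V,R)(y)|\to0$, valid so long as $y_\lambda$ has not exited $B_{2\rho}(0)$.

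A routine continuation argument then upgrades this to all of $[0,t]$: for $\lambda$ large enough that $\delta_\lambda e^{Lt}<\rho$, the estimate forces $|y_\lambda(s)|\le|w(s)|+|y_\lambda(s)-w(s)|<2\rho$ wherever it holds, so $y_\lambda$ cannot reach $\partial B_{2\rho}(0)$ on $[0,t]$, and the bound is in fact global on $[0,t]$; since $\delta_\lambda\to0$ this gives the displayed uniform convergence over $(s,x,\alpha)\in[0,t]\times D_1\times D_2$. I expect this continuation step to be the only real (if routine) subtlety, since a priori $y_\lambda$ solves an ODE that could blow up, and one knows only that the \emph{limiting} trajectory $w$ stays bounded on $[0,t]$.

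For the last assertion I would observe that, for each fixed $\lambda$, the map $(s,x,\alpha)\mapsto\rho_{1/\lambda}^{-\lambda^2\alpha v}\varphi_{s/\lambda^{d}}(R)\rho_{1/\lambda}^{\lambda^2\alpha v}x$ is a composition of three parameterized local semigroups each lying in $\sat_u(\mathcal{F}(V_0;V_1,\dots,V_k))$: the two outer ray steps $\rho^{\pm\lambda^2\alpha v}$ of duration $1/\lambda$, with the continuous parameter selection $\alpha\mapsto\pm\lambda^2\alpha v\in\text{span}(v)$, available because $(\rho,\text{span}(v))\in\sat_u(\cdot)$, and the middle flow $\varphi(R)$ of duration $s/\lambda^{d}$, available by hypothesis. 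The total duration $2/\lambda+s/\lambda^{d}\to0$, hence is $\le s$ once $\lambda$ is large. Combining this with the uniform convergence just established shows $\{(t,x,\alpha)\mapsto\varphi_t(\alpha^{d}\Br(V,R))x\}\preccurlyeq_u\sat_u(\mathcal{F}(V_0;V_1,\dots,V_k))$, and then $\sat_u(\mathcal F)\sim_u\mathcal F$ together with transitivity of $\preccurlyeq_u$ yields $(t,x,\alpha)\mapsto\varphi_t(\alpha^{\deg(R,V)}\Br(V,R))x\in\sat_u(\mathcal{F}(V_0;V_1,\dots,V_k))$, as claimed.
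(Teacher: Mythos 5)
Your proof is correct and follows essentially the same route as the paper: conjugating to the ODE $\dot y_\lambda=\lambda^{-\deg(R,V)}R(y_\lambda+\lambda\alpha v)$, extracting $\alpha^{\deg(R,V)}\Br(V,R)$ via the Taylor expansion~\eqref{eqn:reldegcomp}--\eqref{eqn:bracketad} with an $O(1/\lambda)$ remainder, and closing with a Gr\"onwall estimate plus an exit-time/continuation argument (the paper's stopping time $S_{2j}^\lambda$ plays exactly the role of your ball $B_{2\rho}(0)$). Your more explicit treatment of the final saturate bookkeeping, via continuity of the parameter maps, total duration $2/\lambda+s/\lambda^{\deg(R,V)}\leq s$, and transitivity of $\preccurlyeq_u$, is consistent with what the paper leaves implicit.
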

%
%and suppose that $(t,x) \mapsto \varphi_t(R) x \in \sat(\mathcal{F})$ and $(t,x) \mapsto \varphi_t(\alpha V) x\in \sat(\mathcal{F})$ for all $\alpha \in \RR$.  If $\deg(R,V)$ is odd, then $ (t,x) \mapsto \varphi_t(\alpha \Br(V,R)) x\in \sat(\mathcal{F})$ for all $\alpha \in \RR$.  If $\deg(R,V)$ is even, then $(t,x) \mapsto \varphi_t(\lambda \Br(V,R)) x\in \sat(\mathcal{F})$ for all $\lambda \geq 0$.\end{proposition}

\begin{proof}
Let $N= \deg(R,V)$ and $R'=\Br(V,R)$.  For $(x,\alpha) \in D_1\times D_2$ and $s\leq t$ set
\begin{align}
w_s^\lambda = \rho_{\frac{1}{\lambda}}^{-\lambda^2 \alpha v} \, \varphi_{\frac{s}{\lambda^N}} (R)\,\rho_{\frac{1}{\lambda}}^{\alpha \lambda^2 v} x. 
\end{align}
Since $(s,x, \alpha) \mapsto \varphi_s(\alpha^{\deg(R,V)}  \text{Br}(V,R)) x$ is a continuous mapping on the compact set $[0, t]\times D_1\times D_2$ into $\RR^n$, there is $j>0$ large enough so that 
\begin{align*}
\sup_{(s,x, \alpha)\in [0,t]\times D_1\times D_2} | \varphi_s(\alpha^{\deg(R,V)}  \text{Br}(V,R)) x|< j.\end{align*}
For $\lambda >0$ introduce
\begin{align*}
S_{2j}^{\lambda}= \inf\{ r \geq 0\, : \, |w_r^\lambda|\geq 2 j \}.
\end{align*}
Note that for any $s< S_{2j}^{\lambda}\wedge t$ and $(x, \alpha) \in D_1\times D_2$ we have   
\begin{align*}
\frac{1}{2}\frac{d}{ds} | w_s^\lambda - \varphi_s(\alpha^N R') x|^2 &= [\lambda^{-N} R(w_s^\lambda+\lambda \alpha v) - \alpha^N R'(\varphi_s(\alpha^N R') x)]\cdot (w_s^\lambda - \varphi_s(\alpha^N R') x).
\end{align*}
Thus applying~\eqref{eqn:reldegcomp} to the term $R(w_t^\lambda+ \lambda \alpha v)$, we obtain for $s< S_{2j}^\lambda\wedge t$ and $(x,\alpha) \in D_1 \times D_2$:  
\begin{align*}
\frac{1}{2}\frac{d}{ds} | w_s^\lambda - \varphi_s(\alpha^N R') x|^2 &=[\alpha^N R'(w_s^\lambda) - \alpha^N R'(\varphi_s(\alpha^N R') x)]\cdot (w_s^\lambda - \varphi_s(\alpha^N R') x) \\
\\ \qquad &+ \frac{1}{\lambda^N}\bigg(\sum_{|\eta| \leq N-1} \frac{D^\eta R(w_s^\lambda)}{\eta !} (\lambda \alpha)^{|\eta|} v^\eta \bigg) \cdot  (w_s^\lambda - \varphi_s(\alpha^N R') x)\\
& \leq C_{j} \frac{|w_s^\lambda - \varphi_s(\alpha^N R') x|^2}{2} + \frac{K_{j}}{2\lambda},
\end{align*}
where $C_{j}, K_{j} >0$ are constants depending only on $j$.  Then Gr\"{o}nwall's inequality gives that 
\begin{align*}
\sup_{(s,x, \alpha)\in [0, S_{2j}^\lambda \wedge t] \times D_1 \times D_2}| w_s^\lambda - \varphi_s(\alpha^N R') x|^2 \leq \frac{K_{j} t}{\lambda} e^{C_{j} t}.
\end{align*}
Consequently, $\liminf_{\lambda \rightarrow \infty}S_{2j}^\lambda > t$ and the result follows.  
\end{proof}

Proposition~\ref{prop:simplescaling} and Proposition~\ref{prop:bracket} give rise to an inductive procedure that can be used to produce trajectories in $\sat_u(\mathcal{F})$, which we now outline.  Note that this is the same procedure as in~\cite{HM_15}, but in the language of parameterized semigroups.   

For any collection $\mathcal{G}$ of parameterized continuous local semigroups on $\RR^n$, we call $V\in \text{Cons}(\RR^n)$ \emph{scalable for} $\mathcal{G}$ if $(\rho, \text{span}(v))\in \mathcal{G}$ where $v\in \RR^n$ is the constant value of $V$ and $(\rho, M)$ was defined in~\eqref{eqn:raysemi}. The set of constant vector fields which are scalable for $\mathcal{G}$ is denoted by $\text{Scale}(\mathcal{G})$.   Recalling $\mathcal{F}(V_0; V_1, \ldots, V_k)$ given in~\eqref{eqn:Fdef}, we define $\mathcal{F}_0=\mathcal{F}_0(V_0; V_1, \ldots, V_k)$ and $\mathcal{F}_1=\mathcal{F}_1(V_0; V_1, \ldots, V_k)$ by  
\begin{align*}  
\mathcal{F}_0&= \mathcal{F}(V_0; V_1, \ldots, V_k), \\
\mathcal{F}_1&= \mathcal{F}_0 \cup \{ (\rho, \text{span}(v_j) ) \, : \, j=1,2,\ldots, k, \,\, V_j(x)=v_j \in \RR^n\text{ for all } x\in \RR^n \}.
\end{align*}
For $j\geq 2$, we define $\mathcal{F}_j =\mathcal{F}_j(V_0; V_1, \ldots, V_k)$ inductively by 
\begin{align*}
\mathcal{F}_j&= \mathcal{F}_{j-1} \cup \\
& \{(t,x, \alpha) \mapsto \varphi_t(\alpha^{\text{deg}(Q,V)}\Br(V,Q))x \, : \, V\in \text{Scale}(\mathcal{F}_{j-1}), (t,x) \mapsto \varphi_t(Q)x \in \mathcal{F}_{j-1}, Q\in \text{Poly}(\RR^n)\}.  
\end{align*} 
Observe that by combining Proposition~\ref{prop:simplescaling} and Proposition~\ref{prop:bracket}, we have that 
\begin{align}
\bigcup_{j=0}^\infty \mathcal{F}_j(V_0; V_1, \ldots, V_k) \subset  \sat(\mathcal{F}_u(V_0; V_1, \ldots, V_k)). 
\end{align}

\subsubsection{Examples}  In this section, we apply the previous results concerning the uniform saturate.  
\begin{example}
In this example, we present a simple scenario to illustrate the basic idea of the results in the previous section. Fix $a,b\in \RR$ and let $V_0\in \text{Poly}(\RR^2)$ and $V_1\in \text{Cons}(\RR^2)$ be given by 
\begin{align*}
V_0(x,y)=(x^2-ay^2+by, 2x) \qquad \text{ and } \qquad V_1(x,y)=e_2.
\end{align*} 
Define $\mathcal{F}= \mathcal{F}(V_0; V_1)$ and $\mathcal{F}_j= \mathcal{F}_j(V_0; V_1)$, $j\geq 0$, as above. 
Note that if $v_1$ is the constant value of $V_1\in \text{Cons}(\RR^n)$, then 
\begin{align*}
(\rho, \text{span}(v_1))\in \mathcal{F}_1 ; 
\end{align*}
that is, $V_1\in \text{Scale}(\mathcal{F}_1)$.  Next, observe that $\deg(V_1, V_0)=2$ if $a\neq 0$ and $\deg(V_1, V_0)=1$ if $a=0, b\neq 0$.  Hence, 
\begin{align} 
\Br( V_1, V_0)(x,y) = \begin{cases}
-a e_1& \text{ if } a\neq 0 \\
 b e_1& \text{ if } a=0, b\neq 0.
\end{cases}
\end{align}  
Thus if $a\neq 0$, then $(\rho, \text{cone}(-a e_1)) \in \mathcal{F}_2$, where 
\begin{align*}
\text{cone}(-a e_1)= \{ (-a\lambda, 0) \, : \, \lambda \geq 0 \}. 
\end{align*}
Alternatively if $a=0, b\neq 0$, then $(\rho, \text{span}(e_1) ) \in \mathcal{F}_2$.  Consequently, if $a\neq 0$ it follows that for every $(x,y) \in \RR^2$ and every $t>0$
\begin{align*}
 \overline{\A_\mathcal{F}((x,y), \leq t)}\supset \{ (x, y) - \lambda a e_1 + \alpha e_2 \, : \, \lambda \geq 0, \alpha \in \RR \} .
\end{align*} 
On the other hand, if $a=0$ and $b\neq 0$, then using Corollary~\ref{cor:exactcont} we see that for every $(x,y) \in \RR^2$ and every $t>0$, 
\begin{align*}
 A_\mathcal{F}(x, t)=\RR^2.
\end{align*}

\end{example}

\begin{example}\label{ex:IK}

Let $V_1\in \text{Poly}(\RR^n)$ be given by $V_1(x)=e_1$ and $V_0\in \text{Cons}(\RR^n)$ satisfy
\begin{align*}
V_0^j(x) = \begin{cases}
0 & \text{ if } j=1\\
x^{j-1}& \text{ if } j=2,3,\ldots, n.
\end{cases}
\end{align*} 
Define $\mathcal{F}= \mathcal{F}(V_0; V_1)$ and $\mathcal{F}_j= \mathcal{F}_j(V_0; V_1)$, $j\geq 0$, as above. 
Then $(\rho, \text{span}(e_1)) \in \mathcal{F}_1$.  Also, $\deg(V_1, V_0)=1$ and $\text{Br}(V_1, V_0) = \partial_{x^2}$, so $(\rho, \text{span}(e_2) ) \in \mathcal{F}_2$ and $V_2\in \text{Cons}(\RR^n)$ given by $V_2(x)=e_2$ is such that $V_2 \in \text{Scale}(\mathcal{F}_2)$.  Inductively, $(\rho, \text{span}(e_j)) \in \mathcal{F}_j$ for all $j$, so 
by Corollary~\ref{cor:exactcont} we have  
\begin{align*}
\A_\mathcal{F}(x,t) = \RR^n 
\end{align*}
for all $x\in \RR^n, t>0$. 
\end{example}

\begin{example}
\label{ex:controllor96}
Recalling the context of the scaling limits for the Lorenz '96 model discussed in Example~\ref{ex:lor96} and Example~\ref{ex:lor92part2}, let $n\geq 4$, $V_0 \in \Poly(\RR^n)$ and $V_1, V_2 \in \Cons(\RR^n)$ be given by
\begin{align*}
V_0^j(x) = \begin{cases}
0 & \text{ if } j=1,2\\
-x^{j-2} x^{j-1} & \text{ if } j=3,4,\ldots, n-1\\
x^1 x^{n-1} & \text{ if } j=n
\end{cases}\qquad \text{ and } \qquad V_1(x)=e_1,\,\, V_2(x)=e_2.
\end{align*}
Let $\mathcal{F}=\mathcal{F}(V_0; V_1, V_0)$ and $\mathcal{F}_j = \mathcal{F}_j(V_0; V_1, V_2)$, $j\geq 0$, be as above.  Then $(\rho, \text{span}(e_j)) \in \mathcal{F}_1$ for $j=1,2$.  Observe that $\deg(V_1, V_0)=1$ and 
\begin{align*}
\Br(V_1, V_0)= [V_1, V_0] = - x^2 \partial_{x^3}+ x^{n-1} \partial_{x^n}.  
\end{align*}   
It follows from Proposition~\ref{prop:bracket} that $(t,x) \mapsto \varphi_t(\alpha \Br(V_1, V_0))x \in \mathcal{F}_2$ for every $\alpha\in \RR$.  Furthermore, $\deg(V_2, \Br(V_1, V_0))=1$ and 
\begin{align*}
\Br(V_2, \Br(V_1, V_0)) = [ V_2, [V_1, V_0]] =  - \partial_{x_3}.  
\end{align*}
Hence, $(\rho, \text{span}(e_3)) \in \mathcal{F}_3$ so that if $V_3=\partial_{x^3}$, $V_3\in \text{Scale}(V_3)$.  Inductively, if we define $V_j = e_j$ for $j=4,5, \ldots, n$, then there exists $m$ such that $(\rho, \text{span}(e_j)) \in \mathcal{F}_m$ for $j=4,5,\ldots, n$ and all $\alpha \in \RR$.  Hence, applying Corollary~\ref{cor:exactcont}
\begin{align*}
 \A_\mathcal{F}(x, t)=\RR^n. 
\end{align*}   
for all $x\in \RR^n$ and $t>0$. 

Next, still in the context of the Lorenz '96 model, our goal is to show that for any $t\in (0,1]$, there exists $r>0$ such that 
\begin{align}
\label{eq:idaos}
\text{cl}(A_{\text{L},t}(0)) \supset B_r(0).  
\end{align} 
When considering the LIL scaling $x_{\epsilon,t}$ as in~\eqref{eqn:LILrescaling}, then~\eqref{eq:idaos} implies that, for fixed $t$, the collection of limit points as $\epsilon \rightarrow 0^+$ of the $\RR^n$-valued random variables $\{ x_{\epsilon, t} \}_{\epsilon}$ contains $\text{cl}(B_r(0))$.  See the discussion around~\eqref{eqn:limitptfixed} for more details.

To prove~\eqref{eq:idaos},  we note that by Lemma~\ref{lem:comphom} the control problem~\eqref{eqn:controlLIL} is component homogeneous since each component of $P_\text{L}$ is a monomial.  Furthermore, since $A_{\text{D}, t}(x) \supset A_\mathcal{F}(x,t)=\RR^n$ for $x\in \RR^n, t\in (0,1]$, then $0\in A_{\text{D}, t}(x)$ for all $x\in \RR^n$, $t\in (0,1]$.  Thus ~\eqref{eq:idaos} follows from Theorem~\ref{thm:transfer}.  
\end{example}

\section{practical criteria for regular points}
\label{sec:regularpt}

Throughout this section, $\mathscr{O}\subset  \RR^n$ denotes a non-empty open set with continuous boundary $\partial \mathscr{O}$ containing $0\in \RR^n$.  We recall that $x_t$ denotes the solution of~\eqref{eqn:SDEp} and that $0\in \RR^n$ is called \emph{regular} for $(x_t, \mathscr{O})$ if $\PP\{ \xi=0\}=1$, where
\begin{align}
\xi = \inf\{ t>0 \, : \, x_t \in \mathscr{O} \}.
\end{align}
We furthermore suppose there exists $j\in \{1,2,\ldots, n\}$, $\delta >0$ and a continuous function $b:\RR^{n-1}\rightarrow \RR$ for which
\begin{align}
\label{eqn:domain1}
\partial \mathscr{O} \cap B_\delta(0) &= \{ x\in B_\delta(0) \, : \, x^j = b(x^1, \ldots, x^{j-1}, x^j,\ldots x^{n})\}, \quad \text{ and } \\
\label{eqn:domain2}\mathscr{O} \cap B_\delta(0)&=   \{ x\in B_\delta(0) \, : \, x^j > b(x^1, \ldots, x^{j-1}, x^j,\ldots x^{n})\}.\end{align}
The goal of this section is to provide general criteria for $0\in \RR^n$ to be regular for $(x_t, \mathscr{O})$.  We then apply the criteria in several examples.
%we say that $0\in \RR^n$ is \emph{regular} for $(x_t, \mathscr{O})$ if $\PP\{ \xi =0 \} =1$.  The point $0\in \RR^n$ is called \emph{irregular} otherwise.  Since the event $\{ \xi >0 \}$ belongs to the germ $\sigma$-field $\bigcap_{t>0} \mathscr{F}_t$, Blumenthals' $0$-$1$ law implies
%\begin{align}
%\PP\{ \xi>0 \} \in \{ 0,1\}.
%\end{align}  Thus $0$ is irregular for $(x_t,\mathscr{O})$ if and only if $\PP\{ \xi >0 \}=1$.     

In order to characterize when $0\in \RR^n$ is regular for $(x_t,\mathscr{O})$, we use the distributional scaling~\eqref{eqn:rescaledist}.  Specifically, supposing that \eqref{eqn:SDEp} is noise propagating, introduce a transformation $S_{\epsilon, \text{D}} : \RR^n\rightarrow \RR^n$, $\epsilon >0$, given by 
\begin{align}
S_{\epsilon, \text{D}}(x^1, x^2, \ldots, x^n) = \Big( \frac{x^1}{\epsilon^{\mathfrak{b}_1}},  \frac{x^2}{\epsilon^{\mathfrak{b}_2}}, \ldots,  \frac{x^n}{\epsilon^{\mathfrak{b}_n}}\Big).\end{align} 
Let $r_\epsilon$, $\epsilon >0$, be the one-parameter family of open rectangles defined by
\begin{align}
r_\epsilon = (-\epsilon^{\mathfrak{b}_1}, \epsilon^{\mathfrak{b}_1}) \times(-\epsilon^{\mathfrak{b}_2}, \epsilon^{\mathfrak{b}_2}) \times \cdots \times (-\epsilon^{\mathfrak{b}_n}, \epsilon^{\mathfrak{b}_n}), \,\, \epsilon >0,\end{align} 
and observe that $S_{\epsilon, \text{D}}$ maps $r_\epsilon$ bijectively onto $r_1=(-1,1)^n$.  By our hypothesis on the domain $\mathscr{O}$, there exists $\epsilon_*>0$ such that 
\begin{align}
\Gamma_\epsilon:=\{ x\in r_\epsilon \, : \, x^j = b(x^1, \ldots, x^{j-1}, x^j,\ldots x^{n})\} &= r_\epsilon \cap \partial \mathscr{O},\\
\mathscr{O}_{\epsilon}:=   \{ x\in r_\epsilon \, : \, x^j > b(x^1, \ldots, x^{j-1}, x^j,\ldots x^{n})\} &= r_\epsilon \cap \mathscr{O},\end{align}
for every $\epsilon \in (0, \epsilon_*)$.  Also, by definition of the transformation $S_{\epsilon, \text{D}}$, observe that for any $\epsilon \in (0, \epsilon_*)$
\begin{align*}
S_{\epsilon, \text{D}}(\Gamma_\epsilon) = \{ y\in(-1,1)^n \, :\, \epsilon^{\mathfrak{b}_j}y^j = b(y^1 \epsilon^{\mathfrak{b}_1}, \ldots, y^n \epsilon^{\mathfrak{b}_n})\},\\
S_{\epsilon, \text{D}} (\mathscr{O}_{\epsilon}) =\{ y\in (-1,1)^n \, :\, \epsilon^{\mathfrak{b}_j}y^j > b(y^1 \epsilon^{\mathfrak{b}_1}, \ldots, y^n \epsilon^{\mathfrak{b}_n}) \}.
\end{align*}
Recalling the accessibility set $A_{\text{D}, t}(0)$ introduced in~\eqref{def:AD}, we have the following:

\begin{theorem}[Regular Point Criteria]
\label{thm:criteriareg}
Assume that \eqref{eqn:SDEp} is noise propagating.  If there exists a non-empty open set $\mathscr{O}_*$ for which  
\begin{align*}
\mathscr{O}_* \subset  S_{\epsilon,\emph{\text{D}}}(\mathscr{O}_{\epsilon})   \text{ for all } \epsilon \in (0, \epsilon_*) \,\,\,\text{ and }\,\,\, \bigcup_{t\in (0, 1]} \cl(\A_{\text{\emph{D}},t}(0)) \cap \mathscr{O}_* \neq \emptyset ,
\end{align*} 
then $0\in \RR^n$ is regular for $\mathscr{O}$. 
\end{theorem}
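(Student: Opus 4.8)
The plan is to show $\PP\{\xi=0\}>0$ and then upgrade this to $\PP\{\xi=0\}=1$ by Blumenthal's zero--one law, thereby concluding that $0$ is regular for $(x_t,\mathscr{O})$. By hypothesis there exist $t_0\in(0,1]$ and a point $v\in\cl(\A_{\text{D},t_0}(0))\cap\mathscr{O}_*$. The bridge between the two scalings is the elementary identity $y_{\epsilon,t}=S_{\epsilon,\text{D}}(x_{\epsilon t})$ for $t<\tau_\infty/\epsilon$, together with the fact that, since $\mathfrak{b}_j\in\cals$ has vanishing second component, $S_{\epsilon,\text{D}}$ is a linear bijection of $\RR^n$ with positive diagonal entries $\epsilon^{\mathfrak{b}_j}$. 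Thus $\mathscr{O}_*\subseteq S_{\epsilon,\text{D}}(\mathscr{O}_\epsilon)$ is equivalent to $S_{\epsilon,\text{D}}^{-1}(\mathscr{O}_*)\subseteq\mathscr{O}_\epsilon\subseteq\mathscr{O}$, so for every $\epsilon\in(0,\epsilon_*)$,
\begin{align*}
\{y_{\epsilon,t_0}\in\mathscr{O}_*\}\ \subseteq\ \{x_{\epsilon t_0}\in\mathscr{O}_\epsilon\}\ \subseteq\ \{x_{\epsilon t_0}\in\mathscr{O}\}\ \subseteq\ \{\xi\le\epsilon t_0\}.
\end{align*}
Hence it suffices to produce a constant $c>0$ and $\epsilon_0\in(0,\epsilon_*)$ with $\PP\{y_{\epsilon,t_0}\in\mathscr{O}_*\}\ge c$ for all $\epsilon\in(0,\epsilon_0)$.

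To get this lower bound I would invoke Corollary~\ref{cor:dist}: the processes $y_{\epsilon,\cdot}$ converge in law to the nonexplosive solution $y_\cdot$ of~\eqref{eqn:SDEdist}. More usefully, the coupling constructed in the proof of Corollary~\ref{cor:dist} yields processes $z_{\epsilon,\cdot}$, each equal in law to $y_{\epsilon,\cdot}$ on $\E$, with $z_{\epsilon,t_0}\to y_{t_0}$ almost surely as $\epsilon\to0^+$; indeed, $y_\cdot$ does not reach $\Delta$ before time $1\ge t_0$, so the convergence $d_{t_0}(z_{\epsilon,\cdot},y_\cdot)\to0$ forces convergence of the time-$t_0$ marginals in $\RR^n$. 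Since $\mathscr{O}_*$ is open, $\liminf_{\epsilon\to0^+}\mathbf{1}_{\mathscr{O}_*}(z_{\epsilon,t_0})\ge\mathbf{1}_{\mathscr{O}_*}(y_{t_0})$ almost surely, and Fatou's lemma gives
\begin{align*}
\liminf_{\epsilon\to0^+}\PP\{y_{\epsilon,t_0}\in\mathscr{O}_*\}\ =\ \liminf_{\epsilon\to0^+}\PP\{z_{\epsilon,t_0}\in\mathscr{O}_*\}\ \ge\ \PP\{y_{t_0}\in\mathscr{O}_*\}.
\end{align*}
Positivity of the right-hand side I would get from Lemma~\ref{lem:support}: the support of $t\mapsto y_t$ in $\C$ equals $\cl(\{t\mapsto\varphi_t(P_{\text{D}},f)0:f\in\HH\})$, and pushing this forward through the continuous evaluation map $g\mapsto g_{t_0}:\C\to\RR^n$ shows that the topological support of the law of $y_{t_0}$ contains $\cl(\A_{\text{D},t_0}(0))$, hence the point $v$. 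Since $v\in\mathscr{O}_*$ and $\mathscr{O}_*$ is open, $\PP\{y_{t_0}\in\mathscr{O}_*\}>0$, which gives the desired $c,\epsilon_0$.

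It then remains to close the argument. By the first display, $\PP\{x_{\epsilon t_0}\in\mathscr{O}\}\ge c$ for all $\epsilon\in(0,\epsilon_0)$. Choosing any sequence $\epsilon_k\downarrow0$ and setting $A_k=\{x_{\epsilon_k t_0}\in\mathscr{O}\}$, the reverse Fatou lemma gives $\PP(\limsup_k A_k)\ge\limsup_k\PP(A_k)\ge c$; on the other hand $A_k\subseteq\{\xi\le\epsilon_k t_0\}$ and $\epsilon_k t_0\to0$, so $\limsup_k A_k\subseteq\{\xi=0\}$. Therefore $\PP\{\xi=0\}\ge c>0$, and Blumenthal's zero--one law yields $\PP\{\xi=0\}=1$, i.e. $0\in\RR^n$ is regular for $(x_t,\mathscr{O})$.

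I expect the main obstacle to lie in the second paragraph: turning the distributional convergence of the $\E$-valued processes $y_{\epsilon,\cdot}$ into a genuinely uniform-in-$\epsilon$ positive lower bound on $\PP\{y_{\epsilon,t_0}\in\mathscr{O}_*\}$ while correctly handling explosion, where the evaluation map is continuous only on non-exploding paths. Working with the explicit coupling $z_{\epsilon,\cdot}$ from the proof of Corollary~\ref{cor:dist} (rather than an abstract portmanteau argument on $\E$), combined with Lemma~\ref{lem:support} to locate $v$ in the support of $y_{t_0}$, is what I expect to make this step go through cleanly.
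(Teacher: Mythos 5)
Your proposal is correct and follows essentially the same route as the paper: the event inclusion $\{y_{\epsilon,t_0}\in\mathscr{O}_*\}\subseteq\{\xi\le\epsilon t_0\}$, the coupling/convergence from Corollary~\ref{cor:dist}, positivity via the support description of Lemma~\ref{lem:support}, and Blumenthal's zero--one law. The only (harmless) difference is that you phrase the limiting step as a liminf bound via the coupling and Fatou, which is in fact a slightly more careful rendering of the paper's assertion that $\PP\{y_{\epsilon,t}\in\mathscr{O}_*\}\to\PP\{y_t\in\mathscr{O}_*\}$.
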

\begin{remark}
The hueristics behind Theorem~\ref{thm:criteriareg} is that the distributional rescaling gives the local asymptotic behavior of the process $x_t$ solving \eqref{eqn:SDEp} at time zero in the distributional sense.  As long as the domain $\mathscr{O}$ ``plays well" with the underlying scaling $S_{\epsilon, \text{D}}$ and the resulting limiting control problem visits points in $\mathscr{O}_*$, then $0\in \RR^n$ is regular for $(x_t, \mathscr{O})$.    
\end{remark}
\begin{proof}[Proof of Theorem~\ref{thm:criteriareg}]
Recalling the process $t\mapsto y_{\epsilon, t}$ given in~\eqref{eqn:Deps}, observe that for any $t>0$ and $\epsilon >0$ small enough
\begin{align*}
\PP \{ \xi \leq \epsilon t\} &=\PP\big\{x_{\epsilon s} \in \mathscr{O} \text{ for some } s\in (0,t] \big\}\\
&\geq \PP \big\{ y_{\epsilon,s} \in S_{\epsilon, \text{D}}(\mathscr{O}_{\epsilon})\text{ for some } s\in (0, t] \big\}\\
&\geq \PP \{ y_{\epsilon,s} \in \mathscr{O}_* \text{ for some } s\in (0, t] \} \geq \PP \{ y_{\epsilon,t} \in \mathscr{O}_* \}.
\end{align*}  
By Corollary~\ref{cor:dist}, $\PP \{ y_{\epsilon,t} \in \mathscr{O}_* \} \rightarrow \PP\{ y_t \in \mathscr{O}_*\}$ as $\epsilon \rightarrow 0$ where $y_t$ solves~\eqref{eqn:SDEdist}.  Since $\cl(\A_{\text{D},t}(0)) \cap \mathscr{O}_* \neq \emptyset$ for some $t\in (0,1]$ and $\mathscr{O}_*$ is open, Lemma~\ref{lem:support} implies that for some $t \in (0,1]$, $\PP\{ y_t \in \mathscr{O}_* \} >0$.  In particular, we have 
\begin{align*}
\PP \{ \xi =0 \} \geq \PP\{ y_t \in \mathscr{O}_* \} >0.  
\end{align*}
By Blumenthal's $0-1$ law, it follows that $\PP\{ \xi =0 \}=1$, finishing the proof. 
\end{proof}

%We call $\mathscr{O}$ \emph{locally invariant} under $(S_{\epsilon, \text{D}})_{\epsilon >0}$ if there exists $\rho>0$ such that $S_{\epsilon, \text{D}}(\mathscr{O}_\epsilon)= \mathscr{O}_1$ for all $\epsilon\in (0, \rho)$.  We also have the following immediate corollary of Theorem~\ref{thm:criteriareg}.   
%
%\begin{corollary}
%\label{cor:regularpoint}
%Assume that \eqref{eqn:SDEp} is noise propagating and that $\mathscr{O}$ locally invariant under $(S_{\epsilon, \emph{\text{D}}})_{\epsilon >0}$.  If $\cl(\A_{t}^2(0))\cap \mathscr{O}_1 \neq 0$ for some $t\in (0,1]$, then $0\in \RR^n$ is regular for $\mathscr{O}$.
%
%\end{corollary}

We next consider a few examples to illustrate Theorem~\ref{thm:criteriareg}.    

\begin{example}
\label{ex:BMsimple}
As a preliminary example, we first consider the simple case of a standard Browian motion on $\RR^n$, $n\geq 2$.  That is, suppose in equation~\eqref{eqn:SDEp}, $\sigma = I_{n\times n}$ and $P(x)\equiv 0$ so that $x_t= B_t$.  The associated scaling transformation $S_{\epsilon, \text{D}}$ is given by$$S_{\epsilon, \text{D}}(x)= \frac{x}{\epsilon^{1/2}}.$$
Furthermore, the associated control problem is given by 
\begin{align}
\begin{cases}
\dot{x}= \dot{f}\\
x_0=0
\end{cases}
\end{align}
where the control $f$ belongs to the class $\mathscr{H}$.  Note that, because one has control in all directions, it follows that $\A_{\text{D},t}(0)=\RR^n$ for all $t\in (0,1]$.

Next, consider any domain $\mathscr{O} \subset  \RR^2$ such that, for some $\alpha  >0$ and some $\delta >0$, we have 
\begin{align*}
\partial \mathscr{O} \cap B_\delta(0) = \{ x\in B_\delta(0) \, : \, x^n = \alpha |(x^1, \ldots, x^{n-1})| \}, \\
 \mathscr{O} \cap B_\delta(0) = \{ x\in B_\delta(0) \, : \, x^n > \alpha  | (x^1, \ldots, x^{n-1})| \}.
\end{align*} 
Since the cone $x^n= \alpha |(x^1, \ldots, x^{n-1})|$ is invariant under $S_{\epsilon, \text{D}}$ for any $\epsilon >0$, there exists $\epsilon_*>0$ for which 
\begin{align*}
S_{\epsilon, \text{D}} (\mathscr{O}_{\epsilon})&= \{ x\in r_1 \, : \, x^n > \alpha |(x^1, \ldots, x^{n-1})|\} \,\,\text{ for all } \epsilon \in (0,\epsilon_*).
\end{align*}
As observed above, $\A_{\text{D},t}(0) = \RR^2$ for any $t\in (0,1]$.  Thus it follows that $0\in \RR^2$ is regular for $(x_t, \mathscr{O})$ by Theorem~\ref{thm:criteriareg}.    
\end{example}

\begin{example}
\label{ex:IKsimple}
Suppose that $x_t=(x_t^1, \ldots, x_t^n)$ satisfies
\begin{align}
\begin{cases}
dx_t^1 = dB^1_t \\
dx_t^2 = x_t^1 \, dt \\
\quad \vdots \qquad \vdots \\
dx_t^{n}= x_t^{n-1} \, dt\\
x_0 = 0.
\end{cases}
\end{align}
This example was studied extensively in the papers~\cite{Lac_97ii, Lac_97i}.  In particular, both the law of the iterated logarithm (non-functional)~\cite{Lac_97ii} as well as a necessary and sufficient condition for $0\in \RR^n$ to be regular for $(x_t, \mathscr{O})$ were obtained~\cite{Lac_97i}.  There, the criteria for $0\in \RR^n$ to be regular goes beyond the scope of the results in this work, in particular the types of permissible boundaries.  Here we show how this example fits into our framework.

In this case, $x_t$ is noise propagating and the associated distributional scaling transformation $S_{\epsilon, \text{D}}$ is given by 
\begin{align*}
S_{\epsilon, \text{D}}(x^1, \ldots, x^n)=\Big(\frac{x^1}{\epsilon^{1/2}}, \frac{x^2}{\epsilon^{3/2}}, \ldots, \frac{x^{n}}{\epsilon^{(2n-1)/2}}\Big).\end{align*}
Let $\mathscr{O}\subset  \RR^n$ be any domain satisfying~\eqref{eqn:domain1} and~\eqref{eqn:domain2} for some $j$
 and suppose furthermore that there exists $\epsilon_*>0$ and non-empty open $\mathscr{O}_* $ with $\mathscr{O}_*\subset S_{\epsilon, \text{D}}(\mathscr{O}_\epsilon)$ for all $\epsilon \in (0, \epsilon_*)$.  As a concrete example, one could take $j=1$ and 
 \begin{align*}
 b(x^2, x^3, \ldots, x^n)= |x^2|^{1/3}+ |x^3|^{1/5}+ \cdots + |x^n|^{1/(2n-1)}.  
 \end{align*} 
 Also, note that $\A_{\text{D},t}(0)=\RR^n$ for all $t\in (0,1]$ by Example~\ref{ex:IK} .  Hence, it follows that $0\in \RR^n$ is regular for $(x_t, \mathscr{O})$. 
 \end{example}

\begin{example}
As a nontrivial example, consider the stochastic Lorenz '96 model introduced in Example~\ref{ex:lor96}.  That is, let $x_t$ solve~\eqref{eqn:stochlor1} and recall that $x_t$ is noise propagating.  Example~\ref{ex:lor92part2} shows that $P_\text{L}=P_\text{D}$, where $P_\text{L}$ is given explicitly in~\eqref{eqn:PLlor}.  Also, $\mathfrak{b}_j= (\mathfrak{a}_j^1, 0)$, where $\mathfrak{a}_j=(\mathfrak{a}_j^1, \mathfrak{a}_j^2)$ satisfies $\mathfrak{a}_1=\mathfrak{a}_2=(1/2,1/2)$ with 
\begin{align*}
\mathfrak{a}_j = \mathfrak{a}_{j-1}+ \mathfrak{a}_{j-2}+ (1,0), \,\,\, j=3,4,\ldots, n-1,
\end{align*} 
and $\mathfrak{a}_n= (3/2,1/2)+ \mathfrak{a}_{n-1}$.  Furthermore, we saw in Example~\ref{ex:controllor96} that $A_{\text{D}, t}(0)=\RR^n$ for all $t\in (0,1]$.  Thus any domain $\mathscr{O}$ for which \eqref{eqn:domain1} and~\eqref{eqn:domain2} are satisfied for some $j$ and such that there exists a nonempty open set $\mathscr{O}_*\subset S_{\epsilon, \text{D}}(\mathscr{O}_\epsilon)$ for all $\epsilon >0$ small enough, $0\in \RR^n$ is regular for $(x_t, \mathscr{O})$.   
\end{example}

\subsection{General remarks and second-order Langevin dynamics}
Suppose now that $\mathscr{U}\subset \RR^n$ is a non-empty open set with continuous boundary $\partial \mathscr{U}$ which possibly does not contain the origin.  If $x_t$ denotes the solution of~\eqref{eqn:SDEp} and $x_* \in \RR^n$, we let $x_t(x_*):=x_t+x_*$ and observe that $x_t(x_*)$ satisfies
\begin{align}
\label{eqn:SDEpshift}
\begin{cases}
dx_t(x_*)= P(x_t(x_*)-x_*) \, dt + \sigma dB_t\\
x_t(x_*)=x_*. 
\end{cases}
\end{align}
Note that \eqref{eqn:SDEpshift} has the same generic form as \eqref{eqn:SDEp} but the resulting equation is started from a general initial condition $x_*$.  For $x_* \in \partial \mathscr{U}$, let 
\begin{align}
\xi_{x_*} = \inf\{ t >0 \,: \, x_t(x_*) \in \mathscr{U}\}.  
\end{align}
We say that $x_* \in \partial \mathscr{U}$ is \emph{regular} for $(x_t(x_*), \mathscr{U})$ if $\PP\{ \xi_{x_*} < \infty\} =1$ and \emph{irregular} for $(x_t(x_*), \mathscr{U})$ otherwise.  We first state a result that follows directly from the definitions.   
\begin{proposition}
\label{prop:regequiv}
Let $x_* \in \partial \mathscr{U}$ and $\mathscr{O}:= \{ x\in \RR^n \, : \, x = y- x_* \text{ for some } y\in \mathscr{U}  \}$.  Then $x_* $ is regular for $(x_t(x_*), \mathscr{U})$ if and only if $0\in \partial \mathscr{O}$ is regular for $(x_t, \mathscr{O})$.    
\end{proposition}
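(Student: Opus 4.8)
The plan is to unwind the definitions and reduce the assertion to a pathwise identity of hitting times. First I would recall that, by the very definition given in the paragraph preceding the statement, $x_t(x_*) = x_t + x_*$, where $x_t$ solves \eqref{eqn:SDEp} and is driven by the same Brownian motion $B_t$; equivalently, $x_t(x_*)$ is the solution of \eqref{eqn:SDEpshift}. With the standard convention that the translation $y \mapsto y - x_*$ fixes the death state $\Delta$, this identity also persists on $\{x_t(x_*) = \Delta\}$, and since $\mathscr{U}$ and $\mathscr{O}$ are both subsets of $\RR^n$ they contain no death state, so the passage through $\E$ to accommodate explosive trajectories creates no difficulty.

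Next I would identify $\mathscr{O} = \{y - x_* : y \in \mathscr{U}\}$ as the translate $\mathscr{U} - x_*$. In particular $\mathscr{O}$ is non-empty and open (indeed it has continuous boundary, being a translate of $\mathscr{U}$), and since $x_* \in \partial\mathscr{U}$ we have $0 = x_* - x_* \in \partial(\mathscr{U} - x_*) = \partial\mathscr{O}$; thus $0 \in \partial\mathscr{O}$ and the notion of $0$ being regular for $(x_t, \mathscr{O})$ is meaningful. Now for each $t$ one has $x_t(x_*) \in \mathscr{U}$ if and only if $x_t + x_* \in \mathscr{U}$, i.e. if and only if $x_t \in \mathscr{U} - x_* = \mathscr{O}$, whence the pathwise equality
\begin{align*}
\xi_{x_*} \;=\; \inf\{t > 0 : x_t(x_*) \in \mathscr{U}\} \;=\; \inf\{t > 0 : x_t \in \mathscr{O}\} \;=\; \xi,
\end{align*}
where $\xi$ is exactly the hitting time appearing in the definition of regularity of $0$ for $(x_t, \mathscr{O})$ at the start of Section~\ref{sec:regularpt}.

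Since $\xi_{x_*}$ and $\xi$ coincide as random variables, the events governing the two notions of regularity are the same, and the claimed equivalence follows directly from the respective definitions (and, if one prefers, Blumenthal's zero--one law applied to the germ $\sigma$-field $\bigcap_{s>0}\F_s$, exactly as recalled in the Introduction). There is essentially no obstacle here: the only steps meriting a word of care are the identification $\mathscr{O} = \mathscr{U} - x_*$ together with $0 \in \partial\mathscr{O}$, and the bookkeeping of the death state under the translation — both entirely routine within the framework of Section~\ref{sec:NotTerm}.
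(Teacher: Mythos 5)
Your argument is correct and is essentially the paper's own: the paper offers no written proof beyond the remark that the proposition ``follows directly from the definitions,'' and your proof is precisely that unwinding --- identify $\mathscr{O}=\{y-x_*:y\in\mathscr{U}\}$ as the translate of $\mathscr{U}$, note $0\in\partial\mathscr{O}$, and use $x_t(x_*)=x_t+x_*$ to get the pathwise identity $\xi_{x_*}=\xi$, from which the equivalence of the two regularity statements is immediate. The only point worth flagging is that the paper's displayed definition of regularity for $(x_t(x_*),\mathscr{U})$ literally reads $\PP\{\xi_{x_*}<\infty\}=1$ rather than $\PP\{\xi_{x_*}=0\}=1$; your proof (reasonably) treats the two notions as parallel hitting-time-zero conditions, which is clearly the intended reading and the one under which the stated equivalence holds.
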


Our goal is to characterize regular points for second-order Langevin dynamics started on level sets of the underlying Hamiltonian.  That is, let $U\in C^2(\RR^k; [0, \infty))$ and consider
\begin{align}
\label{eqn:LDG}
\begin{cases}
dq_t = p_t\, dt \\
dp_t = -p_t\, dt - \nabla U(q_t) \, dt + \sqrt{2} \, dB_t \\
(q_0, p_0)\in \RR^n 
\end{cases}\end{align}
where $B_t$ is a standard, $k$-dimensional Brownian motion.  
Recall that this is the same context as discussed in Example~\ref{ex:LD2}, but here we also assume that $U\geq 0$.  However, since only the gradient of $U$ appears in~\eqref{eqn:LDG}, we can always add a constant to $U$ to achieve $U\geq 0$.  See also Remark~\ref{rem:LD} which shows that we do not need to assume $U$ is a polynomial to apply the scaling results in the paper. 

Fix $R>0$ and let 
\begin{align}
H(q,p) = \frac{|p|^2}{2}+ U(q)
\end{align}
 be the Hamiltonian of the system~\eqref{eqn:LDG}.  We study the regular point problem for the process $(q_t, p_t)$ started on 
\begin{align}
\partial H_{>R} := \{(q,p) \in \RR^k \times \RR^k \,: \,H(q,p)=R\}
\end{align}
which is the boundary of the open set  
\begin{align}
H_{>R}= \{ (q,p) \in \RR^k \times \RR^k \,: \,H(q,p) > R\}. 
\end{align}  
We assume below that $R>0$ is chosen so that both $H_{>R}$ and $\partial H_{>R}$ are non-empty.  
\begin{theorem}
Every $(q_0, p_0) \in \partial H_{>R}$ is regular for $((q_t, p_t), H_{>R})$. 
\end{theorem}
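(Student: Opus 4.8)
The plan is to reduce to the setting of Theorem~\ref{thm:criteriareg} via the translation Proposition~\ref{prop:regequiv}, and then run the distributional scaling analysis already prepared in Example~\ref{ex:LD2}. Fix $(q_0,p_0)\in\partial H_{>R}$, set $\mathscr{O}:=\{(q,p)-(q_0,p_0):H(q,p)>R\}$, and let $\bar x_t:=(q_t-q_0,\,p_t-p_0)$, which solves the translated equation~\eqref{eqn:LDshift} with momentum offset $p_0$ and has $\bar x_0=0\in\partial\mathscr{O}$. By Proposition~\ref{prop:regequiv} it suffices to show $0$ is regular for $(\bar x_t,\mathscr{O})$, and by Remark~\ref{rem:LD} the fact that $U$ is only $C^2$ is immaterial. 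By Example~\ref{ex:LD2} the system is noise propagating; the distributional-scaling algorithm of Section~\ref{sec:lawscale} keeps exactly the terms the LIL algorithm keeps, so $P_\text{D}=P_\text{L}$ and $\mathfrak{b}_j=(\mathfrak{a}_j^1,0)$. Concretely $P_\text{D}^j=0$ in the momentum directions (where $\mathfrak{b}_j=(\tfrac12,0)$), $P_\text{D}^j=p_0^j$, a nonzero constant, in the position directions with $p_0^j\neq0$ (where $\mathfrak{b}_j=(1,0)$), and $P_\text{D}^j=\bar p^j$ in the position directions with $p_0^j=0$ (where $\mathfrak{b}_j=(\tfrac32,0)$).

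The next step is to read off the accessibility set $\A_{\text{D},t}(0)$ of the control problem~\eqref{eqn:controllaw} for this $P_\text{D}$. The limiting control system decouples into one $(q^j,p^j)$-block per index: each momentum coordinate obeys $\dot{\bar p}^j=\sqrt2\,\dot f^j$ and is freely steerable; a position coordinate with $p_0^j=0$ obeys $\dot{\bar q}^j=\bar p^j$, a Kolmogorov pair which is controllable, so that block reaches all of $\RR^2$; a position coordinate with $p_0^j\neq0$ obeys $\dot{\bar q}^j=p_0^j$, forcing $\bar q^j(t)=p_0^j t$. Hence
\[
\A_{\text{D},t}(0)=\bigl\{y:\ y^j=p_0^j t\ \text{for every position index }j\text{ with }p_0^j\neq0,\ \text{other coordinates free}\bigr\},
\]
and in particular $\A_{\text{D},t}(0)=\RR^n$ for all $t>0$ whenever $p_0=0$.

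It remains to verify the two hypotheses of Theorem~\ref{thm:criteriareg} for a suitable nonempty open $\mathscr{O}_*$. Expanding $H$ about $(q_0,p_0)$ and using $H(q_0,p_0)=R$,
\[
H(q_0+\bar q,\,p_0+\bar p)-R=p_0\cdot\bar p+\tfrac12|\bar p|^2+\nabla U(q_0)\cdot\bar q+O(|\bar q|^2).
\]
Substituting $\bar p^j=\epsilon^{1/2}y^j$, $\bar q^j=\epsilon^{\mathfrak{b}_j}y^j$ (writing $y_p$ for the momentum block of $y$) and using $\mathfrak{b}_j\geq(1,0)$ on position coordinates, the dominant term on the fixed box $r_1$ is $\epsilon^{1/2}\,p_0\cdot y_p$ when $p_0\neq0$ (all other terms $O(\epsilon)$), and $\tfrac{\epsilon}{2}|y_p|^2$ when $p_0=0$ (all other terms $O(\epsilon^{3/2})$). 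Thus for a fixed small $\delta>0$ the set $\mathscr{O}_*:=\{y\in r_1: p_0\cdot y_p>\delta\}$ (resp. $\mathscr{O}_*:=\{y\in r_1:|y_p|^2>\delta\}$ when $p_0=0$) is nonempty, open, and satisfies $\mathscr{O}_*\subset S_{\epsilon,\text{D}}(\mathscr{O}_\epsilon)$ for all small $\epsilon$. Finally $\mathscr{O}_*\cap\A_{\text{D},t}(0)\neq\emptyset$ for all small $t>0$: when $p_0=0$ this is immediate since $\A_{\text{D},t}(0)=\RR^n$; when $p_0\neq0$ choose $t$ small enough that $|p_0^j|t<1$ for the constrained indices, set those coordinates to $p_0^j t$, take $y_p$ a suitable positive multiple of $p_0$ inside the box, and set the rest to $0$. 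Theorem~\ref{thm:criteriareg} then gives that $0$ is regular for $(\bar x_t,\mathscr{O})$, hence $(q_0,p_0)$ is regular for $((q_t,p_t),H_{>R})$.

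I expect two points to require the most care. The first is the bookkeeping in the displayed expansion: one must match the Taylor orders of $H$ at $(q_0,p_0)$ against the anisotropic exponents $\mathfrak{b}_j$ and check uniformly on $r_1$ that the claimed leading term genuinely dominates; the delicate instance is $p_0=0$, where the linear part of $H$ vanishes and $\tfrac12|\bar p|^2$ must beat the position contributions (it does, since position coordinates cost at least $\epsilon^{3/2}$). The second is the genuinely degenerate case $p_0=0$ and $\nabla U(q_0)=0$, where $\partial H_{>R}$ need not be a graph near $(q_0,0)$, so the standing graph hypothesis behind Theorem~\ref{thm:criteriareg} may fail; there I would instead rerun the \emph{proof} of Theorem~\ref{thm:criteriareg} directly with $\mathscr{O}_*=\{y\in r_1:|y_p|^2>\delta\}$, using only $S_{\epsilon,\text{D}}(\mathscr{O}\cap r_\epsilon)\supset\mathscr{O}_*$ (from $H-R=\tfrac{\epsilon}{2}|y_p|^2+O(\epsilon^{3/2})$) together with $\A_{\text{D},t}(0)=\RR^n$, or argue from scratch: Itô's formula gives $dH(q_t,p_t)=(k-|p_t|^2)\,dt+\sqrt2\,p_t\cdot dB_t$, and a law-of-the-iterated-logarithm estimate on the right-hand side shows $H(q_t,p_t)>R$ for a sequence of times decreasing to $0$ almost surely, after which Blumenthal's zero-one law concludes.
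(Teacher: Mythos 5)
Your proposal is correct and follows essentially the same route as the paper: translate by $(q_0,p_0)$ via Proposition~\ref{prop:regequiv}, use the scalings and $P_\text{D}$ from Example~\ref{ex:LD2}, Taylor-expand $H$ to identify the same sets $\mathscr{O}_*$ in the two cases $p_0\neq 0$ and $p_0=0$, and conclude with Theorem~\ref{thm:criteriareg} using that the momentum directions are directly controlled. Your two added refinements — choosing $t$ small so the constrained coordinates $p_0^jt$ stay in $r_1$, and noting that when the boundary fails the graph hypothesis one can rerun the proof of Theorem~\ref{thm:criteriareg} with $\mathscr{O}_\epsilon=\mathscr{O}\cap r_\epsilon$ — are sensible points that the paper passes over but that do not change the argument.
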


\begin{proof}
Let $(q_0, p_0) \in \partial H_{>R}$ and consider the shifted sets
\begin{align}
\mathscr{O}_{(q_0, p_0)} &= \{ (\overline{q},\overline{p}) \in \RR^k \times \RR^k \, : \,H(\overline{q}+q_0, \overline{p}+p_0)>R \},\\
\partial \mathscr{O}_{(q_0, p_0)}&=\{ (\overline{q},\overline{p}) \in \RR^k \times \RR^k \, : \,H(\overline{q}+q_0, \overline{p}+p_0) =R\}.
\end{align}
Similarly, we consider the shifted process $(\overline{q}_t, \overline{p}_t):=( q_t-q_0, p_t-p_0)$ which satisfies 
\begin{align}
\label{eqn:LDNP}
\begin{cases}
d\overline{q}_t= (\overline{p}_t +p_0)\, dt \\
d\overline{p}_t = - (\overline{p}_t+p_0)\, dt - \nabla U(\overline{q}_t+q_0) \, dt + \sqrt{2} \, dB_t\\
(\overline{q}_0, \overline{p}_0)=0.
\end{cases}
\end{align}
We recall from Example \ref{ex:LD2} and Remark~\ref{rem:LD} that~\eqref{eqn:LDNP} is noise propagating.  Furthermore, it follows that $\mathfrak{b}_{k+1}=\mathfrak{b}_{k+2}=\cdots =\mathfrak{b}_{2k}= 1/2$ and, for $j=1,2,\ldots, k$, 
\begin{align}
\mathfrak{b}_j = \begin{cases}
1 &\text{ if } p_0^j \neq 0 \\
\frac{3}{2}& \text{ if } p_0^j =0. 
\end{cases}
\end{align}
Also, $P_\text{D}^j(\overline{q},\overline{p}) =0$ if $j=k+1, \ldots, 2k$ and 
\begin{align}
P_\text{D}^j(\overline{q},\overline{p}) = \begin{cases}
p_0^j & \text{ if } p_0^j \neq 0 \\
\overline{p}^j & \text{ if } p_0^j =0.
\end{cases}
\end{align}
Next, let $\epsilon >0$ and $r_\epsilon =(-\epsilon^{\mathfrak{b}_1}, \epsilon^{\mathfrak{b}_2})\times \cdots \times( -\epsilon^{\mathfrak{b}_{2k}}, \epsilon^{\mathfrak{b}_{2k}})$.  Consider 
\begin{align*}
\mathscr{O}_{\epsilon, (q_0, p_0)} =\{  (\overline{q},\overline{p}) \in r_\epsilon \, : \, H(\overline{q}+q_0, \overline{p}+p_0)>R\}\end{align*}  
and observe that 
\begin{align*}
S_{\epsilon, \text{D}}(\mathscr{O}_{\epsilon, (q_0, p_0)}) = \{ (\overline{q},\overline{p}) \in r_1 \, : \, H(S_{\epsilon, \text{D}}^{-1}(\overline{q}, \overline{p})+ (q_0,p_0))>R \} 
\end{align*}
where 
\begin{align*}
S_{\epsilon, \text{D}}^{-1}(\overline{q},\overline{p}) = (\epsilon^{\mathfrak{b}_1} \overline{q}^1, \ldots, \epsilon^{\mathfrak{b}_k}\overline{q}^k,  \epsilon^{1/2} \overline{p}). 
\end{align*}
Using Taylor's formula, we can write for any $(\overline{q}, \overline{p}) \in r_1$:
\begin{align}
\nonumber &H(S_{\epsilon, \text{D}}^{-1}(\overline{q}, \overline{p})+ (q_0,p_0))\\
\nonumber &= H(q_0, p_0)+ S_{\epsilon, \text{D}}^{-1}(\overline{q}, \overline{p}) \cdot\nabla H(q_0, p_0)+ \frac{1}{2}\nabla^2 H(q_0, p_0) S_{\epsilon, \text{D}}^{-1}(\overline{q}, \overline{p})\cdot S_{\epsilon, \text{D}}^{-1}(\overline{q}, \overline{p})+ R_\epsilon(\overline{q}, \overline{p})\\
\nonumber &= R+S_{\epsilon, \text{D}}^{-1}(\overline{q}, \overline{p}) \cdot\nabla H(q_0, p_0)+ \frac{1}{2}\nabla^2 H(q_0, p_0) S_{\epsilon, \text{D}}^{-1}(\overline{q}, \overline{p})\cdot S_{\epsilon, \text{D}}^{-1}(\overline{q}, \overline{p})+ R_\epsilon(\overline{q}, \overline{p}), \end{align}
where $|R_\epsilon(\overline{q}, \overline{p})|\leq C \epsilon^{3/2}$ for all $(\overline{q}, \overline{p}) \in r_1$, for some constant $C>0$.  In particular, $H(S_{\epsilon, \text{D}}^{-1}(\overline{q}, \overline{p})+ (q_0,p_0))>R$ if and only if 
\begin{align}\label{eqn:LDTaylor}
S_{\epsilon, \text{D}}^{-1}(\overline{q}, \overline{p}) \cdot\nabla H(q_0, p_0)+ \frac{1}{2}\nabla^2 H(q_0, p_0) S_{\epsilon, \text{D}}^{-1}(\overline{q}, \overline{p})\cdot S_{\epsilon, \text{D}}^{-1}(\overline{q}, \overline{p})+ R_\epsilon(\overline{q}, \overline{p})>0.  
\end{align}   

\emph{Case 1}.  $p_0^j \neq 0$ for some $j$.  Then by~\eqref{eqn:LDTaylor}, for any $\delta >0$ there exists $\epsilon_* =\epsilon_*(\delta)>0$ such that for all $\epsilon \in (0, \epsilon_*)$:
\begin{align*}
\mathscr{O}_*:= \{ (\overline{q}, \overline{p}) \in r_1 \, : \,  \overline{p}\cdot p_0 > \delta \} \subset S_{\epsilon, \text{D}}(\mathscr{O}_{\epsilon, (q_0, p_0)}).  
\end{align*}  
Also, the associated control problem is given by 
\begin{align*}
\begin{cases}
\frac{d}{dt}\overline{q}^j&= (P_\text{D}(\overline{q},\overline{p}))^j\\
\frac{d}{dt}\overline{p}^j &=\dot{f}
\end{cases}
\end{align*}
where $f\in \mathscr{H}$.  Since the values of $\overline{p}$ are explicitly controlled, $\A_{\text{D},t}(0)\cap \mathscr{O}_*\neq \emptyset$ for all $t\in (0,1]$.  By Theorem~\ref{thm:criteriareg}, the point $0$ is regular for $((\overline{q}_t, \overline{p}_t),\mathscr{O}_{q_0, p_0})$ for $(\overline{q}_t, \overline{p}_t)$ solving~\eqref{eqn:LDNP}.  Hence, for the original system~\eqref{eqn:LDG}, $(q_0, p_0)$ is regular for $((q_t, p_t), H_{>R})$ by Proposition~\ref{prop:regequiv}.

\emph{Case 2}.  $p_0 =0$.  In this case, $\mathfrak{b}_{j}=3/2$ and $P_\text{D}^j(\overline{q},\overline{p})=\overline{p}^j$ for all $j=1,2,\ldots, k$.  Furthermore,  \eqref{eqn:LDTaylor} simplifies to       
\begin{align*}
&\epsilon\frac{|\overline{p}|^2}{2} + \epsilon^{3/2} \nabla U(q_0) \overline{q} + \frac{\epsilon^{3}}{2} \nabla^2 U(q_0)\overline{q} \cdot \overline{q} + R_\epsilon(\overline{q}, \overline{p})>0.
\end{align*}
In this case, for any $\delta >0$ there exists $\epsilon_* =\epsilon_*(\delta)>0$ such that for all $\epsilon \in (0, \epsilon_*)$:
\begin{align*}
\mathscr{O}_*:= \{ (\overline{q}, \overline{p}) \in r_1 \, : \,  |\overline{p}|^2 > 2\delta \} \subset S_{\epsilon, \text{D}}(\mathscr{O}_{\epsilon, (q_0, p_0)}).  
\end{align*}  
Also, the associated control problem  is given by 
\begin{align*}
\begin{cases}
\frac{d}{dt}\overline{q}^j&= \overline{p}^j\\
\frac{d}{dt}\overline{p}^j &=\dot{f}\\
\end{cases}
\end{align*}
where $f\in \mathscr{H}$.  Again, the values of $\overline{p}$ are explicitly controlled.  Hence, $\A_{\text{D}, t}(0)\cap \mathscr{O}_*\neq \emptyset $ for all $t\in (0,1]$.  Applying Theorem~\ref{thm:criteriareg}, we see that $0$ is regular for $((\overline{q}_t, \overline{p}_t),\mathscr{O}_{q_0, 0})$ for $(\overline{q}_t, \overline{p}_t)$ solving~\eqref{eqn:LDNP}.  Hence, for the original system~\eqref{eqn:LDG}, $(q_0, 0)$ is regular for $((q_t, p_t), H_{>R})$ by Proposition~\ref{prop:regequiv}. 
 \end{proof}

\bibliographystyle{plain}
\bibliography{LIL_PDE}

\end{document}